\newenvironment{psmallmatrix}
  {\left(\begin{smallmatrix}}
  {\end{smallmatrix}\right)}
\theoremstyle{plain}
\newtheorem{thm}{Theorem}[section]
\newtheorem{lem}[thm]{Lemma}
\newtheorem{prop}[thm]{Proposition}
\newtheorem{fact}[thm]{Fact}
\newtheorem{cor}[thm]{Corollary}
\theoremstyle{defn}
\newtheorem{defn}[thm]{Definition}
\theoremstyle{remark}
\newtheorem*{remark}{Remark}
\begin{document}

\begin{titlepage}
    \begin{center}
        \vspace*{1cm}
        \Huge
\textbf{The Freyd-Mitchell Embedding Theorem}
        \vspace{3.5cm}
        \Large
        
        \textbf{Arnold Tan Junhan} 
        \vspace{2.0cm}
        \Large
        
        \textbf{Michaelmas 2018 Mini Projects: Homological Algebra}
        \vfill
        \vspace{0.8cm}
        \Large
        University of Oxford   
    \end{center}
\end{titlepage}  
\tableofcontents
\section{Abstract}

Given a small abelian category $\mathcal{A}$, the \textit{Freyd-Mitchell embedding theorem} states the existence of a ring $R$ and an exact full embedding $\mathcal{A} \rightarrow R$-Mod, $R$-Mod being the category of left modules over $R$. This theorem is useful as it allows one to prove general results about abelian categories within the context of $R$-modules. The goal of this report is to flesh out the proof of the embedding theorem.
\\

\noindent We shall follow closely the material and approach presented in Freyd (1964).
\\ This means we will encounter such concepts as projective generators, injective cogenerators, the Yoneda embedding, injective envelopes, Grothendieck categories, subcategories of mono objects and subcategories of absolutely pure objects. This approach is summarised as follows:
\begin{itemize}
    \item the functor category $[\mathcal{A}, Ab]$ is abelian and has injective envelopes.
    \item in fact, the same holds for the full subcategory $\mathcal{L}(\mathcal{A})$ of left-exact functors.
    \item $\mathcal{L}(\mathcal{A})^{op}$ has some nice properties: it is cocomplete and has a projective generator.
    \item such a category embeds into $R$-Mod for some ring $R$.
    \item in turn, $\mathcal{A}$ embeds into such a category.
\end{itemize}
\section{Basics on abelian categories}

Fix some category $\mathcal{C}$. Let us say that a monic $A \rightarrow B$ is \textbf{contained} in another monic $A' \rightarrow B$ if there is a map $A \rightarrow A'$ making the diagram

 \[
  \begin{tikzcd}[row sep=0.4em,column sep=2em]
    A \arrow[dd] \arrow[dr]  \\
    & B  & \text{commute.}\\
    A' \arrow[ur] & \\
  \end{tikzcd}
 \]
 
\noindent We declare two monics $A \rightarrow B$ and $A' \rightarrow B$ to be \textbf{equivalent} if each is contained in the other. In this case $A$ and $A'$ are isomorphic.
\\ A \textbf{subobject} of $B$ is an equivalence class of monics into $B$. The relation of containment gives a partial ordering on subobjects.
\\

\noindent Dually, let us declare two epics $B \rightarrow C$ and $B \rightarrow C'$ in $\mathcal{C}$ to be \textbf{equivalent} if there are maps $C \rightarrow C'$ and $C' \rightarrow C$ such that

 \[
  \begin{tikzcd}[row sep=0.4em,column sep=2em]
    & C \arrow[dd]  &&& C' \arrow[dd]  \\
    B \arrow[ur] \arrow[dr] &&\text{and}& B \arrow[ur] \arrow[dr]&& \text{commute.}\\
    & C' &&& C \\
  \end{tikzcd}
 \]
 
\noindent A \textbf{quotient object} of $B$ is an equivalence class of epics out of $B$, and we say the quotient object represented by $B \rightarrow C$ is \textbf{smaller} than that represented by $B \rightarrow C'$ if we have just the right diagram above.

\noindent If two quotient objects $B \rightarrow C$, $B \rightarrow C'$ are equivalent, then $C \cong C'$.
\\

\noindent When the context is clear, we will often just say \textit{$A$ is a subobject of $B$}, or \textit{$C$ is a quotient object of $B$}.
\\

\begin{defn}
A category is \textbf{complete} if every pair of maps has an equaliser, and every indexed set of objects has a product. Dually, a category is \textbf{cocomplete} if every pair of maps has a coequaliser, and every indexed set of objects has a sum.
\\ A category is \textbf{bicomplete} if it is both complete and cocomplete.
\\
\end{defn}

\begin{defn}
A category $\mathcal{A}$ is \textbf{abelian} if

\begin{itemize}
    \item[A0.] $\mathcal{A}$ has a zero object.
    \\
    \item[A1.] For every pair of objects there is a product and
    \item[A1*.] a sum.
    \\
    \item[A2.] Every map has a kernel and
    \item[A2*.] a cokernel.
    \\
    \item[A3.] Every monomorphism is a kernel of a map.
    \item[A3*.] Every epimorphism is a cokernel of a map.
    \\
    \\
\end{itemize}
\end{defn}

\noindent Let $A$ be an object in abelian category $\mathcal{A}$. Let $\mathcal{S}$ and $\mathcal{Q}$ be the families of subobjects and quotient objects of $A$, respectively. Define two functions $Cok: \mathcal{S} \rightarrow \mathcal{Q}$ and $Ker: \mathcal{Q} \rightarrow \mathcal{S}$, where $Cok$ assigns to each subobject its cokernel, and $Ker$ assigns to each quotient object its kernel. Note that these are order-reversing functions. For instance, if the monic $A \rightarrow B$ is contained in the monic $A' \rightarrow B$, then the epic $Cok(A' \rightarrow B)$ is smaller than the epic $Cok(A \rightarrow B)$.
\\

\begin{thm} \label{2.11}
For each $A$ in an abelian category $\mathcal{A}$, $Ker$ and $Cok$ are mutually inverse functions.
\end{thm}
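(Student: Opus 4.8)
The plan is to check that the two composites $Ker\circ Cok$ and $Cok\circ Ker$ are the identity functions on $\mathcal{S}$ and $\mathcal{Q}$ respectively; this is exactly what it means for $Ker$ and $Cok$ to be mutually inverse (and, since both are order-reversing, they will in fact be mutually inverse order-isomorphisms). I would begin by noting that the list of axioms A0--A3* is closed under dualization, so $\mathcal{A}^{op}$ is again abelian; under this duality the subobjects of $A$ become the quotient objects of $A$ and the function $Ker$ is interchanged with $Cok$. Consequently it suffices to prove $Ker\circ Cok=\mathrm{id}_{\mathcal{S}}$, the identity $Cok\circ Ker=\mathrm{id}_{\mathcal{Q}}$ being obtained by running the same argument in $\mathcal{A}^{op}$.

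So fix a subobject of $A$ represented by a monic $m\colon S\to A$ and set $q:=Cok(m)\colon A\to A/S$, the cokernel of $m$; I must show that $\ker(q)$ represents the same subobject as $m$. The crucial first step is to invoke axiom A3: since $m$ is monic, it is the kernel of some map $f\colon A\to C$. As $fm=0$, the universal property of the cokernel $q$ furnishes a factorization $f=gq$ for some $g\colon A/S\to C$. Writing $k:=\ker(q)\colon K\to A$, I would then produce two comparison maps between $S$ and $K$: on one hand $qm=0$ gives $u\colon S\to K$ with $ku=m$; on the other hand $fk=gqk=0$, so $k$ factors through $m=\ker(f)$, giving $v\colon K\to S$ with $mv=k$.

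The last step is formal cancellation: from $ku=m$ and $mv=k$ we get $k(uv)=mv=k$, whence $uv=\mathrm{id}_K$ because $k$ is monic, and symmetrically $m(vu)=ku=m$ gives $vu=\mathrm{id}_S$ because $m$ is monic. Thus $u$ is an isomorphism identifying the monics $m$ and $k$, i.e.\ $Ker(Cok(m))=[m]$ in $\mathcal{S}$. I expect the genuine content of the proof --- as opposed to diagram chasing --- to be concentrated in the appeal to A3: it is the existence of a map $f$ with $m=\ker f$, together with the derived identity $fk=0$ that lets $k$ re-enter $\ker f=m$, that makes the argument work, and this ingredient is precisely what is unavailable in a general additive category with kernels and cokernels. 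The dualization needed to obtain $Cok\circ Ker=\mathrm{id}_{\mathcal{Q}}$ is then routine.
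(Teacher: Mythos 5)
Your proposal is correct and follows essentially the same route as the paper: invoke A3 to write $m=\ker f$, factor $f$ through the cokernel $q$ to get $fk=0$, and deduce mutual factorization of $m$ and $k=\ker q$ through one another, with the other composite handled by duality. The only cosmetic difference is that you go on to verify the comparison maps are mutually inverse isomorphisms, whereas the paper stops at mutual containment, which already suffices under its definition of equality of subobjects.
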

\begin{proof}
We note in passing that $Ker$ and $Cok$ really are well-defined, as a kernel is always monic and a cokernel always epic.
\\ Now let $A' \rightarrow A$ be a monic. Let $A \rightarrow F$ be the cokernel of $A' \rightarrow A$, and $K \rightarrow A$ the kernel of $A \rightarrow F$. We must show that $K \rightarrow A$ is the same subobject as $A' \rightarrow A$.
\\ By Axiom A3 $A' \rightarrow A$ is the kernel of some $A \rightarrow B$.
\begin{itemize}
    \item $A' \rightarrow A \rightarrow F = 0$, so $A' \rightarrow A$ factors through the kernel of $A \rightarrow F$ as $A' \rightarrow K \rightarrow F$.
    \item On the other hand, $A' \rightarrow A \rightarrow B = 0$ so $A \rightarrow B$ factors through the cokernel of $A' \rightarrow A$ as $A \rightarrow F \rightarrow B$.
\\ Therefore $K \rightarrow A \rightarrow B =K \rightarrow A \rightarrow F \rightarrow B = 0$, and $K \rightarrow A$ factors through the kernel of $A \rightarrow B$ as $K \rightarrow A' \rightarrow B$.
\end{itemize}
We have shown that $KerCok$ is identity. Dually, $CokKer$ is identity.
\\
\end{proof}

\begin{thm}[Abelian categories are balanced]
In an abelian category, every monic epic map is an isomorphism.
\\
\end{thm}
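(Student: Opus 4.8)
The plan is to exploit Axiom A3*. Suppose $f\colon A\to B$ is both monic and epic. Since $f$ is epic, it is the cokernel of some map $g\colon C\to A$. The first step is to observe that $fg=0$ by the defining property of a cokernel; since $f$ is monic, cancelling $f$ on the left forces $g=0$. Thus $f$ is a cokernel of the zero map $C\to A$.

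Next I would identify that zero map's cokernel explicitly: the identity $\mathrm{id}_A\colon A\to A$ is also a cokernel of $0\colon C\to A$, because every map out of $A$ trivially kills $0$ and hence factors uniquely through $\mathrm{id}_A$. Cokernels of a fixed map are unique as quotient objects, so $f$ and $\mathrm{id}_A$ represent the same quotient object of $A$. Unwinding the definition of equivalence of epics then yields a map $u\colon B\to A$ with $uf=\mathrm{id}_A$.

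Finally I would upgrade this one-sided inverse to a two-sided one using that $f$ is epic: from $uf=\mathrm{id}_A$ we get $fuf = f = \mathrm{id}_B\, f$, and cancelling the epic $f$ on the right gives $fu=\mathrm{id}_B$. Hence $f$ is an isomorphism with inverse $u$. (Dually, one could instead invoke A3, write $f$ as the kernel of some $h\colon B\to D$, deduce $h=0$ from epic-ness, identify $f$ with $\mathrm{id}_B$ as a subobject, and argue symmetrically; one could also phrase the whole thing through Theorem~\ref{2.11}, but it is not needed.)

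I do not expect a serious obstacle. The only points requiring a little care are the bookkeeping in passing from ``same quotient object'' to an honest map $u$ with $uf=\mathrm{id}_A$ — i.e.\ reading off the correct triangle in the definition of equivalence of epics — and ensuring the final cancellation rests on epic-ness rather than on any additive structure on hom-sets, which has not yet been set up at this point in the development.
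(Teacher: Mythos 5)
Your proof is correct, and it reaches the conclusion by a slightly different (essentially dual) route from the paper's. The paper starts from the monic side: since $x$ is epic its cokernel is $B\rightarrow 0$, and Theorem~\ref{2.11} (the $Ker$--$Cok$ correspondence, which packages Axiom A3) then identifies $x$ as $Ker(B\rightarrow 0)$; factoring $1_B$ through that kernel gives a right inverse $y$ with $xy=1_B$, and monic cancellation upgrades it to $yx=1_A$. You instead invoke A3* directly to write the epic $f$ as a cokernel of some $g$, use monic cancellation to force $g=0$, and then use uniqueness of cokernels (of the zero map) as quotient objects to produce a left inverse $u$ with $uf=1_A$, finishing with epic cancellation. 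The net effect is the same two-step ``one-sided inverse, then cancel,'' but your version is self-contained in that it needs only A3* and the uniqueness of cokernels, bypassing Theorem~\ref{2.11} entirely; the paper's version is marginally shorter because $Cok(x)=(B\rightarrow 0)$ is immediate and the splitting map falls out of the universal property of the kernel without any discussion of equivalence of quotient objects. Your identification of $\mathrm{id}_A$ as a cokernel of the zero map, and the extraction of $u$ from the equivalence of epics, are both handled correctly, and your final cancellation does rest only on epic-ness, as you note it must at this stage of the development.
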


\begin{proof}
Let $A \xrightarrow{x} B$ be monic and epic.
\\ Obviously, $B \rightarrow 0 = Cok(x)$, so by the result above, $x = Ker(B \rightarrow 0)$.
\\ $B \xrightarrow{1_B} B$ factors through the kernel $x$ of $B \rightarrow 0$: there is $B \xrightarrow{y} A$ with $xy = B \xrightarrow{1_B} B$. ($x$ is \textit{split epic}.)
\noindent Then $xyx = 1_Bx=x1_A$, and since $x$ is monic, $yx=1_A$.
\\
\end{proof}

\noindent The \textbf{intersection} of two subobjects of $\mathcal{A}$ is their greatest lower bound in the family of subobjects of $A$, with respect to containment.

\begin{thm}
In an abelian category, every pair of subobjects has an intersection.
\end{thm}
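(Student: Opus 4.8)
The plan is to exhibit the intersection of two subobjects $a_1\colon A_1\to A$ and $a_2\colon A_2\to A$ as a kernel, relying only on the existence of kernels and cokernels together with Theorem \ref{2.11}; this avoids presupposing any additive structure (and in particular avoids having to construct pullbacks). First I would form the cokernel $c_1\colon A\to F_1$ of $a_1$, noting that by Theorem \ref{2.11} the monic $a_1$ is the kernel of $c_1$. Then I would set $i\colon P\to A_2$ to be the kernel of the composite $c_1a_2\colon A_2\to F_1$. Since kernels are monic and composites of monics are monic, $a_2i\colon P\to A$ is monic and so represents a subobject of $A$; the claim will be that this subobject is $A_1\cap A_2$.

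Next I would verify the two lower-bound containments. Containment in $A_2\to A$ is immediate from the factorization $P\xrightarrow{i}A_2\xrightarrow{a_2}A$. For containment in $A_1\to A$: by construction $c_1(a_2i)=0$, so $a_2i$ factors through the kernel of $c_1$, which is $a_1$, yielding a map $P\to A_1$ over $A$. Then, for the greatest-lower-bound property, I would take any subobject $b\colon B\to A$ contained in both, say via $b_1\colon B\to A_1$ and $b_2\colon B\to A_2$ with $a_1b_1=b=a_2b_2$, and show $b$ factors through $a_2i$. Since $b=a_2b_2$ it suffices to factor $b_2$ through $i=\operatorname{Ker}(c_1a_2)$, i.e.\ to check $c_1a_2b_2=0$; but $a_2b_2=a_1b_1$ and $c_1a_1=0$, so $c_1a_2b_2=c_1a_1b_1=0$. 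Hence $b_2$ factors as $B\to P\xrightarrow{i}A_2$, and composing with $a_2$ shows $b$ is contained in $P\to A$, as required.

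The step I expect to need the most care is the bookkeeping hidden in the phrase ``$a_1$ is the kernel of its cokernel'': Theorem \ref{2.11} delivers this only up to the equivalence relation on monics, so strictly I should either work throughout at the level of subobjects (equivalence classes), where only containment matters, or fix once and for all a representative of $A_1$ equal to $\operatorname{Ker}(c_1)$ and carry the comparison isomorphism along. Everything else is a routine diagram chase with the universal properties of kernel and cokernel, so this is the only place where sloppiness could cause trouble.

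Finally, I would remark that the dual construction (kernel of $c_2a_1$ with $c_2=\operatorname{Cok}(a_2)$) gives the same subobject by uniqueness of greatest lower bounds, and that once the additive structure of an abelian category is available one can identify $P$ with the pullback of $a_1$ and $a_2$; but the argument above is self-contained given only the material developed so far.
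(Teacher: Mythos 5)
Your proposal is correct and takes essentially the same route as the paper: both construct the intersection as the kernel of $\mathrm{Cok}(a_1)\circ a_2\colon A_2\to F_1$ and verify its universal property by the same diagram chase using Theorem \ref{2.11} (the paper packages that verification as the statement that the resulting square is a pullback, of which your greatest-lower-bound check is the subobject special case).
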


\begin{proof}
Let $A_1 \xrightarrow{f_1} A$ and $A_2 \xrightarrow{f_2} A$ be monics, $A \xrightarrow{c} F$ a cokernel of $A_1 \xrightarrow{f_1} A$, and $A_{12} \xrightarrow{k} A_2$ a kernel of $A_2 \xrightarrow{f_2} A \xrightarrow{c} F$.
\\ By definition of $k$, we have $A_{12}\xrightarrow{f_2k} A \xrightarrow{c} F = 0$. Since $A_1 \xrightarrow{f_1} A$ is a kernel of $A \xrightarrow{c} F$, $f_2k$ factors as $A_{12} \xrightarrow{m} A_1 \xrightarrow{f_1} A$. $m$ must be monic, since $f_1m=f_2k$ is monic as the composition of two monics.
\\
We therefore have a commutative diagram
\[ \begin{tikzcd}
A_{12} \arrow{r}{k} \arrow[swap]{d}{m} & A_2 \arrow{d}{f_2} \\
A_1 \arrow{r}{f_1} & A
\end{tikzcd} 
\]
which we claim is actually a pullback square: for each commutative diagram
\[ \begin{tikzcd}
X \arrow{r}{x_2} \arrow[swap]{d}{x_1} & A_2 \arrow{d}{f_2} \\
A_1 \arrow{r}{f_1} & A
\end{tikzcd} 
\]
there exists a unique $X \xrightarrow{x} A_{12}$ such that $mx=x_1$ and $kx=x_2$.
\\

\noindent Well, we have $(cf_2)x_2 = cf_1x_1=0$, so $x_2$ factors uniquely through the kernel $A_{12} \xrightarrow{k} A_2$ of $cf_2$: $$X \xrightarrow{x_2} A_2 = X \xrightarrow{x} A_{12} \xrightarrow{k} A_2,$$
where $x$ is unique such that $kx = x_2$. It only remains to see that $mx=x_1$. This is because
$$f_1mx=f_2kx=f_2x_2=f_1x_1,$$
and $f_1$ is monic.
\\

\noindent In particular, $A_{12} \xrightarrow{f_1m \ = \ f_2k} A$ is the intersection of $A_1 \xrightarrow{f_1} A$ and $A_2 \xrightarrow{f_2} A$, since when $X$ is a subobject contained in $A_1$ and $A_2$, $X$ will also be contained in $A_{12}$.
\end{proof}

\noindent Dually, any two quotient objects have a greatest lower bound. Since $Ker$ and $Cok$ are order-reversing and mutually inverse, every pair of subobjects has a least upper bound: for maps $A_i \rightarrow B$, $(i=1,2)$, their cokernels $B \rightarrow C_i$ have a least upper bound $B \rightarrow C_0$. Take the kernels:

 \[
  \begin{tikzcd}[row sep=0.8em,column sep=2em]
    C_1 \arrow[d] &&& A_1 \arrow[d] \arrow[dr] \\
   C_0 & B \arrow{l} \arrow{ul} \arrow{dl}  &\text{$\xrightarrow[\text{kernels}]{\text{take}}$}& A_0 \arrow{r} & B & \\
    C_2 \arrow[u] &&& A_1\arrow[ur] \arrow{u}\\
  \end{tikzcd}
 \]
\noindent Hence the family of subobjects of $A$ is a lattice. We write $\bigcap$ for the greatest lower bound operation (\textit{meet}) and $\bigcup$ for the least upper bound operation (\textit{join}).
\\

\begin{fact}
Abelian categories have all equalisers and all pullbacks.
Dually, abelian categories have all coequalisers and all pushouts.
\end{fact}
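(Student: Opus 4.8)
My plan is to reduce everything to tools already available: finite products (Axiom A1) and the observation, made explicitly in the proof of the intersection theorem above, that \emph{any pair of monomorphisms with a common codomain has a pullback}. From these I would build equalisers, then obtain arbitrary pullbacks from the standard ``products $+$ equalisers'' recipe, and finally invoke duality --- the defining axioms A0--A3* of an abelian category are self-dual, so $\mathcal{A}^{op}$ is abelian whenever $\mathcal{A}$ is, and coequalisers and pushouts in $\mathcal{A}$ are nothing but equalisers and pullbacks computed in $\mathcal{A}^{op}$. One caveat to respect throughout: at this stage the hom-sets are not yet known to be abelian groups, so I may \emph{not} define the equaliser of $f,g$ as $\ker(f-g)$; the graph construction below uses only products and the already-proven existence of pullbacks of monics.

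For equalisers: given $f,g\colon A\to B$, I would form $A\times B$ with projections $\pi_A,\pi_B$ and consider the graph morphisms $\Gamma_f=\langle 1_A,f\rangle$ and $\Gamma_g=\langle 1_A,g\rangle$ from $A$ into $A\times B$, each of which is split monic (post-compose with $\pi_A$, which is a retraction). Let $E$ be their pullback, with legs $m,m'\colon E\to A$ so that $\Gamma_f m=\Gamma_g m'$. Post-composing this identity with $\pi_A$ forces $m=m'$, post-composing with $\pi_B$ gives $fm=gm$, and $m$ is monic since it is a pullback of the monic $\Gamma_g$. I then claim $m\colon E\to A$ is the equaliser of $f,g$: given any $h\colon X\to A$ with $fh=gh$, the morphism $\langle h,fh\rangle\colon X\to A\times B$ factors through $\Gamma_f$ and through $\Gamma_g$ (via $h$ in both cases, the second using $fh=gh$), hence through the pullback $E$, producing $w\colon X\to E$ with $mw=h$; uniqueness of $w$ is immediate since $m$ is monic. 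That is exactly the universal property of the equaliser.

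For arbitrary pullbacks: given $f\colon A\to C$ and $g\colon B\to C$, I would form $A\times B$ and take $e\colon E\to A\times B$ to be the equaliser of $f\pi_A$ and $g\pi_B$, which now exists by the previous step. Then $f(\pi_A e)=g(\pi_B e)$, and for any test object $X$ equipped with $u\colon X\to A$, $v\colon X\to B$ and $fu=gv$, the morphism $\langle u,v\rangle\colon X\to A\times B$ equalises $f\pi_A$ and $g\pi_B$ and hence factors uniquely through $e$, giving the unique mediating map $X\to E$ compatible with $\pi_A e$ and $\pi_B e$. Thus $(E,\pi_A e,\pi_B e)$ is the pullback of $f$ and $g$. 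Running these two constructions inside $\mathcal{A}^{op}$ then delivers all coequalisers and all pushouts of $\mathcal{A}$.

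The routine verifications (the two universal properties, and checking that the various composites land where claimed) form the bulk of a full write-up but contain no surprises; the only place that asks for a little care is the equaliser step --- both in pushing the universal property through the graph construction and in resisting the temptation to use additive structure on hom-sets that has not yet been set up. Everything downstream of equalisers is purely formal.
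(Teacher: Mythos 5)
Your proof is correct. The paper states this as a bare \emph{Fact} with no argument, so there is nothing to compare against; your write-up supplies a genuine proof using only what is available at that point in the development: the existence of pullbacks of pairs of monics (established in the intersection theorem), binary products, and the self-duality of axioms A0--A3*. The graph-morphism construction of the equaliser is sound --- $\Gamma_f,\Gamma_g$ are split monic, their pullback legs coincide after composing with $\pi_A$, and the universal property goes through exactly as you describe --- and the ``products plus equalisers'' recipe for general pullbacks is then purely formal. Your caution about not writing the equaliser as $Ker(f-g)$ is well placed, since the abelian-group structure on hom-sets is only established later in the section; once it is, that shorter route becomes available, but your argument has the merit of working at the stated point in the logical order.
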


\noindent For instance, then, when we want to show that an abelian category is complete, we just need to check that it has all products.
\\
\begin{defn}
The \textbf{image} $Im(A \rightarrow B)$ of a map $A \rightarrow B$ is the smallest subobject of $B$ such that $A \rightarrow B$ factors through the representing monics.
\\ Dually, the \textbf{coimage} $Coim(A \rightarrow B)$ of $A \rightarrow B$ is the smallest quotient object of $A$ through which $A \rightarrow B$ factors.
\end{defn}

\noindent Recall that $Ker$ and $Cok$ were mutually inverse on subobjects and quotients, but we may of course take the $Ker$ and $Cok$ of any map, then $KerCok$ and $CokKer$ need not be identity. In fact:

\begin{fact}
In an abelian category,
\begin{itemize}
    \item $A \rightarrow B$ has an image, namely, $KerCok(A \rightarrow B)$.
    \item $A \xrightarrow{x} B$ is epic iff $Im(x)=B$, and hence, iff $Cok(x) = 0$.
    \item $A \xrightarrow{x} Im(x)$ is epic.
\end{itemize}
Dually,
\begin{itemize}
    \item $A \rightarrow B$ has a coimage, namely, $CokKer(A \rightarrow B)$.
    \item $A \xrightarrow{x} B$ is monic iff $Coim(x)=A$, and hence, iff $Ker(x) = 0$.
    \item $Coim \xrightarrow{x} B$ is monic.
    \\
\end{itemize}
\end{fact}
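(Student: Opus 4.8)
The plan is to prove the three image claims and then invoke duality (i.e.\ pass to $\mathcal{A}^{op}$) for the three coimage claims. Within the image half, the natural order is: first exhibit $KerCok(x)$ as an image; then show $A\to Im(x)$ is epic; then deduce the epic-characterisations. Throughout I will argue only with kernels, cokernels, and the axioms A0--A3*, deliberately avoiding any use of additive structure on hom-sets, since that has not yet been established at this point in the development.

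\emph{Existence of the image.} Write $c = Cok(x)\colon B\to F$ and $k = Ker(c)\colon I\to B$. Since $cx = 0$, the map $x$ factors through the monic $k$, say $x = k\bar x$ with $\bar x\colon A\to I$, so $I$ is a subobject of $B$ through which $x$ factors. For minimality, suppose $x$ also factors through a monic $j\colon J\to B$, say $x = jx'$. By A3, $j = Ker(g)$ for some $g\colon B\to C$. Then $gx = gjx' = 0$, so $g$ factors through the cokernel $c$ of $x$, say $g = g'c$; hence $gk = g'ck = 0$, so $k$ factors through $Ker(g) = j$, which exhibits $I$ as a subobject contained in $J$. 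Thus $KerCok(x)$ is the smallest subobject of $B$ through which $x$ factors, i.e.\ $Im(x) = KerCok(x)$.

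\emph{$A\to Im(x)$ is epic, and the epic-characterisations.} Keep the notation above, so $Im(x)$ is represented by $k\colon I\to B$ with $x = k\bar x$. Suppose $\bar x$ is not epic; then its cokernel $c'\colon I\to F'$ is not a zero map, hence $Ker(c')\colon I''\to I$ is a \emph{proper} subobject of $I$ (if $Ker(c')$ were $1_I$ we would get $c' = 0$). Since $c'\bar x = 0$, the map $\bar x$ factors through $Ker(c')$, and composing with $k$ shows $x$ factors through the subobject $k\cdot Ker(c')\colon I''\to B$; because $Ker(c')$ is proper and $k$ is monic, this subobject is strictly smaller than $k$ (if $k$ were contained in it, $Ker(c')$ would be split epic, hence an isomorphism by the balanced property — a contradiction), contradicting minimality of the image. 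Hence $\bar x$ is epic. Next, if $x$ is epic then $cx = 0 = 0_{B,F}\,x$ forces $c = 0_{B,F}$, and since a cokernel is epic this makes $1_F = 0_{F,F}$, i.e.\ $F$ is a zero object, so $Cok(x) = 0$; conversely, if $Cok(x) = 0$ then $Im(x) = Ker(B\to 0) = 1_B$, so $x$ coincides with the map $A\to Im(x)$, which is epic. Finally $Cok(x) = 0 \iff Im(x) = B$ follows from Theorem~\ref{2.11}: $Ker$ and $Cok$ are mutually inverse on subobjects and quotient objects, so $Ker(c) = 1_B$ iff $c = Cok(1_B) = (B\to 0)$. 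This settles all three image claims, and applying them in $\mathcal{A}^{op}$ yields the coimage claims.

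\emph{Main obstacle.} The factorisation-chasing is routine; the step that needs genuine care is proving $A\to Im(x)$ is epic, since one must argue correctly with \emph{proper} subobjects — precisely, that $Ker(c')$ being proper together with $k$ monic forces the composite subobject $k\cdot Ker(c')$ to be strictly smaller than $Im(x)$ — and it is tempting, but illegitimate at this stage, to shortcut the argument by subtracting morphisms. Keeping everything phrased purely via kernels, cokernels, A3/A3*, and the balanced property is the delicate point.
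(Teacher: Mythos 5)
The paper states this as an unproved Fact, so there is no official proof to compare against; judged on its own terms, your factorisation argument for the existence and minimality of $KerCok(x)$ is correct, as is the duality reduction. But there is a genuine gap --- in fact a circularity --- at the step you yourself single out as delicate. To show $\bar x\colon A\to Im(x)$ is epic you open with: ``Suppose $\bar x$ is not epic; then its cokernel $c'$ is not a zero map.'' That implication is precisely the contrapositive of ``$Cok(\bar x)=0\Rightarrow \bar x$ epic,'' i.e.\ the nontrivial half of the second bullet, which you later derive \emph{from} the epicness of $A\to Im(x)$. And the implication is not free: in a general category with zero object, kernels and cokernels, a map can have zero cokernel without being right-cancellable (only ``$g\bar x=0\Rightarrow g=0$'' follows from $Cok(\bar x)=0$, and upgrading that to epicness is exactly where one normally subtracts: $(u-v)\bar x=0$ forces $u-v$ to factor through the zero cokernel). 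Since you have, correctly given the paper's ordering, forbidden yourself the additive structure, this step is unsupported, and the careful part of your argument (that $k\cdot Ker(c')$ is strictly smaller than $k$) is guarding the wrong sub-step.

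The repair is small. If $\bar x$ is not epic, choose $u\neq v\colon I\to C$ with $u\bar x=v\bar x$ and let $e\colon E\to I$ be their equaliser, which exists by the paper's earlier Fact that abelian categories have all equalisers and is automatically monic. Then $\bar x$ factors through $e$, and $e$ is a proper subobject of $I$: if $1_I$ were contained in $e$ then $e$ would be split epic as well as monic, hence an isomorphism, forcing $u=v$. Now your own minimality argument runs verbatim with $k\cdot e$ in place of $k\cdot Ker(c')$: the composite is a subobject of $B$ through which $x$ factors and which is strictly contained in $k$, contradicting minimality of the image. With that substitution the rest of your write-up --- the two characterisations of epics and the dual half --- goes through as stated.
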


\noindent Next, we state a couple of lemmas for abelian categories:

\begin{lem} \label{2.64}
Suppose we have exact columns and exact middle row in the following commutative diagram:
\[ \begin{tikzcd}[row sep=0.8em,column sep=1em]
 &0 \arrow{d} &0 \arrow{d} &0 \arrow{d}\\
0 \arrow{r} & B_{11} \arrow{r} \arrow[swap]{d} & B_{12} \arrow{r} \arrow{d} & B_{13} \arrow{d}  \\
0 \arrow{r} & B_{21} \arrow{r} \arrow[swap]{d} & B_{22} \arrow{r} \arrow{d} & B_{23}  \\
0 \arrow{r} & B_{31} \arrow{r} \arrow{d} & B_{32} \arrow{d}  \\
&0 &0 \\
\end{tikzcd}
\]
Then the bottom row is exact iff the top row is exact.
\end{lem}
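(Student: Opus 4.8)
The plan is a pointfree diagram chase. Since this lemma is a prerequisite for the embedding theorem we may not literally chase elements, but each step of the classical chase translates into a manipulation of subobjects using only left-cancellability of monics and one structural input: in an abelian category the pullback of an epic along any map is again epic (recall abelian categories have all pullbacks). Fix notation by writing the two nontrivial rows as $B_{i1}\xrightarrow{a_i}B_{i2}\xrightarrow{b_i}B_{i3}$ for $i=1,2$, the bottom row as $B_{31}\xrightarrow{a_3}B_{32}$, and the columns as $B_{1j}\xrightarrow{f_j}B_{2j}\xrightarrow{g_j}B_{3j}$.

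Two observations reduce the problem. From $f_2a_1=a_2f_1$ with $a_2$ and $f_1$ monic we get that $a_1$ is monic; from $f_3b_1a_1=b_2f_2a_1=b_2a_2f_1=0$ with $f_3$ monic we get $b_1a_1=0$. So the top row is automatically a complex that is exact at $B_{11}$, and ``the top row is exact'' reduces to the single inclusion $\ker b_1\subseteq\operatorname{im}a_1$; likewise ``the bottom row is exact'' means precisely that $a_3$ is monic. The lemma is therefore the equivalence: $a_3$ monic $\iff$ $\ker b_1\subseteq\operatorname{im}a_1$.

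Assume first that $a_3$ is monic, and let $k\colon K\to B_{12}$ represent $\ker b_1$. Then $b_2f_2k=f_3b_1k=0$, so $f_2k$ factors through $\ker b_2=\operatorname{im}a_2$, giving $u\colon K\to B_{21}$ with $a_2u=f_2k$ (using that $a_2$ is monic). Now $a_3g_1u=g_2a_2u=g_2f_2k=0$, so $g_1u=0$ since $a_3$ is monic, hence $u$ factors through $\ker g_1=\operatorname{im}f_1$ as $u=f_1v$. Then $f_2a_1v=a_2f_1v=a_2u=f_2k$, and cancelling the monic $f_2$ gives $a_1v=k$; thus $\ker b_1$ factors through $a_1$, as wanted. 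Conversely, assume $\ker b_1\subseteq\operatorname{im}a_1$ and let $\ell\colon L\to B_{31}$ represent $\ker a_3$. Pulling $\ell$ back along the epic $g_1$ yields $p\colon P\to B_{21}$ and an epic $q\colon P\to L$ with $g_1p=\ell q$. Then $g_2a_2p=a_3g_1p=a_3\ell q=0$, so $a_2p=f_2w$ for some $w\colon P\to B_{12}$ ($\ker g_2=\operatorname{im}f_2$); since $f_3b_1w=b_2f_2w=b_2a_2p=0$ and $f_3$ is monic, $b_1w=0$, so by hypothesis $w=a_1t$ for some $t\colon P\to B_{11}$. Then $a_2f_1t=f_2a_1t=f_2w=a_2p$, so $f_1t=p$, hence $\ell q=g_1p=g_1f_1t=0$; as $q$ is epic this forces $\ell=0$, and since $\ell$ is monic, $L=0$, i.e. $a_3$ is monic.

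The repeated ``factor through a kernel or an image, then cancel a monic'' steps are routine. The only point requiring genuine care is the converse direction: the move ``pick a preimage of $\ell$ under the epic $g_1$'' has to be replaced by passing to the pullback $P$, and one must remember, only at the very end, that $q\colon P\to L$ is epic so that $\ell q=0$ still gives $\ell=0$. I would also note at the outset exactly which portions of the column-exactness hypotheses get used ($f_1,f_2,f_3$ monic, $\operatorname{im}f_1=\ker g_1$, $\operatorname{im}f_2=\ker g_2$, $g_1$ epic), so the lemma is invoked correctly later.
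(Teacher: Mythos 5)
Your proof is correct. The forward direction is in substance the same chase the paper performs: the paper's first two bullets (kernel of $B_{11}\to B_{12}$ vanishes, and $\operatorname{im}\subseteq\ker$) in fact never use the bottom row, and you make this explicit by reducing the whole lemma to the single equivalence ``$a_3$ monic iff $\ker b_1\subseteq\operatorname{im}a_1$''; your argument for that inclusion is the paper's third bullet with the test object taken to be the kernel itself. Where you genuinely diverge is the converse. The paper dispatches it by invoking the snake lemma on the top two rows (after truncating $B_{13}$ to the image of $B_{12}\to B_{13}$), whereas you run an explicit pointfree chase, replacing the inadmissible step ``choose a preimage under the epic $g_1$'' by passage to a pullback and using that the pullback of an epic is epic. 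The trade-off: the paper's route is shorter but rests on the snake lemma, which is cited and never proved in the text; yours is self-contained except for the single fact about stability of epics under pullback, which is elementary (write the pullback as the kernel of $B_{21}\oplus L\to B_{31}$ and use that this map is epic) but is likewise not recorded in the paper, so you should prove it or cite it when you invoke it. One small addition to your closing inventory of hypotheses: both directions also use the middle-row exactness ($a_2$ monic and $\ker b_2=\operatorname{im}a_2$), not only the column hypotheses, so those should appear on the list as well.
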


\begin{proof}
First, we prove the forward direction.
\begin{itemize}
    \item $Ker(B_{11} \rightarrow B_{12}) = 0:$
    \\ Let $A \rightarrow B_{11} \rightarrow B_{12}=0$. Then $A \rightarrow B_{11} \rightarrow B_{21} \rightarrow B_{22} = A \rightarrow B_{11} \rightarrow B_{12} \rightarrow B_{22}=0$.
    \\ Hence $A \rightarrow B_{11} \rightarrow B_{21}$ factors through $Ker(B_{21} \rightarrow B_{22})=0$. Hence $A \rightarrow B_{11}$ factors through $Ker(B_{11} \rightarrow B_{21})=0$, hence $A \rightarrow B_{11} = 0$.
    
    \item $Im(B_{11} \rightarrow B_{12}) \subset Ker(B_{12} \rightarrow B_{13}) :$
    \\ It is enough to see that $B_{11} \rightarrow B_{12}$ factors through $Ker(B_{12} \rightarrow B_{13})$, i.e., $B_{11} \rightarrow B_{12} \rightarrow B_{13} = 0$.
    \\ This follows because $B_{11} \rightarrow B_{12} \rightarrow B_{13} \rightarrow B_{23} =B_{11} \rightarrow B_{21} \rightarrow B_{22} \rightarrow B_{23}= 0$, and $B_{13} \rightarrow B_{23}$ is monic.
    
    \item $ Ker(B_{12} \rightarrow B_{13}) \subset Im(B_{11} \rightarrow B_{12}) :$
    \\ We show that whenever $L \rightarrow B_{12} \rightarrow B_{13} =0$, $L \rightarrow B_{12}$ factors through $B_{11} \rightarrow B_{12}$.
    \\ Well, $0 = L \rightarrow B_{12} \rightarrow B_{13} \rightarrow B_{23} = L \rightarrow B_{12} \rightarrow B_{22} \rightarrow B_{23}$,
    \\ so $L \rightarrow B_{12} \rightarrow B_{22} \subset Ker(B_{22} \rightarrow B_{23}) = B_{21} \rightarrow B_{22}$.
    \\ That is, there is some $L \rightarrow B_{21}$ such that $L \rightarrow B_{12} \rightarrow B_{22} = L \rightarrow B_{21} \rightarrow B_{22}$.
    \\ Next, we have $L \rightarrow B_{21} \rightarrow B_{31} \rightarrow B_{32} =L \rightarrow B_{12} \rightarrow B_{22} \rightarrow B_{32} =0$, so $L \rightarrow B_{21} \rightarrow B_{31} = 0$.
    \\ Hence $L \rightarrow B_{21} \subset Ker(B_{21} \rightarrow B_{31}) = B_{11} \rightarrow B_{21}$, and we factor $L \rightarrow B_{21} = L \rightarrow B_{11} \rightarrow B_{21}$.
    \\ Then $L \rightarrow B_{11} \rightarrow B_{12} \rightarrow B_{22} = L \rightarrow B_{11} \rightarrow B_{21} \rightarrow B_{22} = L \rightarrow B_{21} \rightarrow B_{22} = L \rightarrow B_{12} \rightarrow B_{22},$
    \\ and we are done since $B_{12} \rightarrow B_{22}$ is monic.
\end{itemize}
For the other direction, we need only show that $Ker(B_{31} \rightarrow B_{32}) = 0$. This can be chased similarly, but we note it follows immediately by the snake lemma applied to the top two rows (after replacing $B_{13}$ with $I$, where $I \rightarrow B_{13} = Im(B_{12} \rightarrow B_{13})$).
\end{proof}

\begin{lem}[Nine Lemma] \label{2.65}
Suppose we have exact columns and exact middle row in the commutative diagram:
\[ \begin{tikzcd}[row sep=0.8em,column sep=1em]
 &0 \arrow{d} &0 \arrow{d} &0 \arrow{d}\\
0 \arrow{r} & B_{11} \arrow{r} \arrow[swap]{d} & B_{12} \arrow{r} \arrow{d} & B_{13} \arrow{d} \arrow{r} & 0  \\
0 \arrow{r} & B_{21} \arrow{r} \arrow[swap]{d} & B_{22} \arrow{r} \arrow{d} & B_{23} \arrow{r} \arrow{d} & 0  \\
0 \arrow{r} & B_{31} \arrow{r} \arrow{d} & B_{32} \arrow{d}  \arrow{r} & B_{33} \arrow{d} \arrow{r} &0\\
&0 &0 &0\\
\end{tikzcd}
\]
Then the bottom row is exact iff the top row is exact.
\end{lem}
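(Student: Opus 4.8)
The plan is to deduce this from the Snake Lemma (already used to finish the proof of Lemma~\ref{2.64}), applied to two of the three rows at a time; an equally good route, which also explains why Lemma~\ref{2.64} was isolated, is to combine Lemma~\ref{2.64} with its dual. In either case it suffices to prove just one implication --- say that short-exactness of the top row forces short-exactness of the bottom row --- since applying that implication inside $\mathcal{A}^{op}$ (which is again abelian) to the diagram obtained by reversing all arrows and relabelling $B_{ij}\mapsto B_{4-i,\,4-j}$ swaps the top and bottom rows while preserving the hypotheses ``exact columns, exact middle row'', and so delivers the reverse implication.

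So assume the top two rows and all three columns are short exact. Apply the Snake Lemma to the map of sequences
\[\begin{tikzcd}[column sep=1.8em]
 & B_{11}\arrow{r}\arrow{d} & B_{12}\arrow{r}\arrow{d} & B_{13}\arrow{r}\arrow{d} & 0\\
0\arrow{r} & B_{21}\arrow{r} & B_{22}\arrow{r} & B_{23}
\end{tikzcd}\]
formed by the first two rows; its top row $B_{11}\to B_{12}\to B_{13}\to 0$ and bottom row $0\to B_{21}\to B_{22}\to B_{23}$ are exact because rows $1$ and $2$ of the diagram are short exact. Exactness of the $j$-th column says $B_{1j}\to B_{2j}$ is monic with cokernel $B_{2j}\to B_{3j}$, and comparing the $j$-th and $(j+1)$-st columns shows (using that $B_{2j}\to B_{3j}$ is epic) that the map induced on cokernels is precisely the bottom-row map $B_{3j}\to B_{3,j+1}$ of the diagram. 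Hence the Snake Lemma produces an exact sequence $0\to 0\to 0\to B_{31}\to B_{32}\to B_{33}$, so $B_{31}\to B_{32}$ is monic and the bottom row is exact at $B_{32}$. Finally $B_{32}\to B_{33}$ is epic, since $B_{22}\to B_{32}\to B_{33}$ equals $B_{22}\to B_{23}\to B_{33}$, a composite of two epics, which forces its right-hand factor $B_{32}\to B_{33}$ to be epic. Thus $0\to B_{31}\to B_{32}\to B_{33}\to 0$ is exact.

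Staying instead within the framework of Lemma~\ref{2.64}: deleting $B_{33}$ and the trailing zeros from the Nine-Lemma diagram yields precisely a diagram of the shape treated there, which already gives ``$0\to B_{11}\to B_{12}\to B_{13}$ exact $\iff$ $0\to B_{31}\to B_{32}$ exact'' (control of the left-hand ends), while applying Lemma~\ref{2.64} in $\mathcal{A}^{op}$ to the reversed diagram gives ``$B_{12}\to B_{13}$ epic $\iff$ ($B_{32}\to B_{33}$ epic and the bottom row is exact at $B_{32}$)'' (control of the right-hand ends); combining the two equivalences yields the Nine Lemma. The points needing genuine care are the naturality of the identifications of the kernels and cokernels of the column maps with the outer rows --- so that the Snake sequence really reads $\cdots\to B_{31}\to B_{32}\to B_{33}$ with the intended maps --- and, should the ambient statement of the Snake Lemma not assert exactness at its two extreme terms, supplying that, which is immediate from the fact that a composite of monics (resp.\ epics) is a monic (resp.\ epic).
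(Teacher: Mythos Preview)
Your proposal is correct, and the route you describe in the final paragraph --- combining Lemma~\ref{2.64} with its dual to control the left-hand and right-hand ends separately --- is exactly the paper's proof, which consists of the single sentence ``This follows immediately from Lemma~\ref{2.64} and its dual.'' Your Snake Lemma argument is a valid alternative, and your observation that one implication suffices by passing to $\mathcal{A}^{op}$ is a helpful simplification the paper does not make explicit.
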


\begin{proof}
This follows immediately from Lemma \ref{2.64} and its dual.
\\
\end{proof}

\noindent Recall that the direct sum $A \oplus B$ plays the role of the (binary) categorical sum and product in an abelian category:
\begin{itemize}
    \item We have projection maps $A \oplus B \xrightarrow{\pi_1} A$, $A \oplus B \xrightarrow{\pi_2} B$. Two maps $C \xrightarrow{f_1} A$, $C \xrightarrow{f_2} B$ define a unique map $C \xrightarrow{\langle f_1,f_2 \rangle} A \oplus B$ such that $\pi_j \circ \langle f_1,f_2 \rangle = f_j$.
    \item We have inclusion maps $A \xrightarrow{\iota_1} A \oplus B$, $B \xrightarrow{\iota_2} A \oplus B$. Two maps $A \xrightarrow{g_1} C$, $B \xrightarrow{g_2} C$ define a unique map $A \oplus B \xrightarrow{[ g_1,g_2]} C$ such that $[ g_1,g_2 ] \circ \iota_j = g_j$.
    \\
\end{itemize}
\noindent We may add two maps $f,g:A \rightarrow B$ by defining $f+g: A \rightarrow B$ to be the map $A \xrightarrow{\Delta = \langle 1,1 \rangle} A \oplus A \xrightarrow{[f,g]} B$. Alternatively, we could define it as $A \xrightarrow{ \langle f,g \rangle} B \oplus B \xrightarrow{\Sigma = [1,1]} B$. Both have the zero map $A \xrightarrow{0} B$ as a unit, so an Eckmann-Hilton argument shows that the two operations are the same, and in fact associative and commutative. In fact:

\begin{thm}
The set $Hom(A,B)$ with the operation $+$ is an abelian group.
\end{thm}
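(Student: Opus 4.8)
The plan is to build on what we already have: the Eckmann--Hilton argument shows $(\operatorname{Hom}(A,B),+,0)$ is a commutative monoid, so the only thing left is the existence of additive inverses. First I would record the bilinearity of composition --- for a fixed map $h$, both $g \mapsto gh$ and $g \mapsto hg$ are monoid homomorphisms --- which is a routine consequence of the universal properties of $\oplus$ (e.g.\ $\langle g_1,g_2\rangle h = \langle g_1 h, g_2 h\rangle$ and $h[g_1,g_2] = [hg_1,hg_2]$), together with the identities $\Delta = \iota_1 + \iota_2$ and $\iota_1\pi_1 + \iota_2\pi_2 = 1_{A\oplus A}$, all verified by testing against projections and injections. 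Granting bilinearity, it suffices to produce for each object $A$ a map $\nu_A \colon A \to A$ with $1_A + \nu_A = 0$; then for arbitrary $f \colon A \to B$ the map $f\circ\nu_A$ is an additive inverse of $f$, since $f + f\nu_A = f\circ(1_A + \nu_A) = f\circ 0 = 0$.

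Here is how I would construct $\nu_A$. Let $q \colon A\oplus A \to Q$ be the cokernel of the (split) monic $\Delta = \langle 1_A,1_A\rangle$. By Theorem \ref{2.11}, $\Delta$ is the kernel of $q$; moreover, since $\Delta = \iota_1 + \iota_2$, we obtain the key relation $q\iota_1 + q\iota_2 = q\Delta = 0$ inside the monoid $\operatorname{Hom}(A,Q)$. I claim $q\iota_1 \colon A \to Q$ is an isomorphism. It is monic: if $(q\iota_1)x = 0$, then $\iota_1 x$ factors through $\ker q = \Delta$, say $\iota_1 x = \Delta y$, and composing with $\pi_1$ and $\pi_2$ forces $x = y$ and $y = 0$, so $x = 0$; hence $\ker(q\iota_1) = 0$ and $q\iota_1$ is monic. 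It is epic: if $w(q\iota_1) = 0$, then the relation $q\iota_1 + q\iota_2 = 0$ gives $w(q\iota_2) = 0$ as well, so $wq = wq(\iota_1\pi_1 + \iota_2\pi_2) = 0$, and $w = 0$ since $q$ is epic; hence $\operatorname{coker}(q\iota_1) = 0$ and $q\iota_1$ is epic. As abelian categories are balanced, $q\iota_1$ is an isomorphism; let $r$ be its inverse and set $\nu_A := r\,q\iota_2$. Then, using bilinearity, $1_A + \nu_A = r(q\iota_1) + r(q\iota_2) = r\,q(\iota_1 + \iota_2) = r\,q\Delta = 0$, as required.

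I expect the real content --- and the only genuine obstacle --- to be this construction of inverses: the commutative-monoid structure and the bilinearity of composition are purely diagrammatic facts about the biproduct $\oplus$, whereas cancellation is precisely where one must spend the abelian-category axioms (existence of the cokernel $q$, the identification $\Delta = \ker q$ via Theorem \ref{2.11}, and balancedness to promote ``monic and epic'' to ``iso''). The one point I would be careful to check is that the tools invoked --- Theorem \ref{2.11}, balancedness, and the equivalences $\ker x = 0 \Leftrightarrow x$ monic and $\operatorname{coker} x = 0 \Leftrightarrow x$ epic --- were all derived from the axioms A0--A3* without reference to the additive structure, so that using them here is not circular.
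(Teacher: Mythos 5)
Your proof is correct, but it reaches the inverse by a genuinely different construction than the paper. The paper works with the single ``shear'' endomorphism $\begin{psmallmatrix}1 & x\\0 & 1\end{psmallmatrix}$ of $A \oplus B$: it computes that this map has zero kernel (hence is monic), dually is epic, hence by balancedness is invertible, and then reads off $-x$ as the off-diagonal entry of the inverse matrix. You instead construct a single map $\nu_A = -1_A$ per object, as $(q\iota_1)^{-1}(q\iota_2)$ where $q=\mathrm{coker}(\Delta)$, and obtain all inverses by composition; the paper constructs $-x$ directly for each map $x$. The two arguments spend the axioms in exactly the same places --- a kernel computation to get ``monic,'' a dual one to get ``epic,'' balancedness to get ``iso,'' and bilinearity of composition over the already-established monoid structure (the paper uses bilinearity implicitly in its matrix-product calculus) --- so neither is more economical in its hypotheses; yours is slightly longer but makes visible that negation is composition with a canonical endomorphism $\nu_A$, which also shows immediately that the maps $\mathrm{Hom}(A,B)\to\mathrm{Hom}(A,B')$ induced by composition are group homomorphisms. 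On your closing worry about circularity: the equivalences ``$\mathrm{Ker}=0$ iff monic'' and ``$\mathrm{Cok}=0$ iff epic'' are recorded in the paper as Facts before the additive structure is introduced, and the paper's own proof leans on them in exactly the same way (it passes from ``the kernel of the shear map is zero'' to ``the shear map is monic''), so your argument incurs no debt that the paper's does not; Theorem \ref{2.11} and balancedness are likewise proved there purely from A0--A3*.
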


\begin{proof}
It remains to exhibit an inverse $A \xrightarrow{-x} B$ for $A \xrightarrow{x} B$.
\\ It is convenient to introduce matrix notations for maps to/from the direct sum:
\\ Write $\begin{psmallmatrix} w \\y\end{psmallmatrix}$ for $[w,y]$, $\begin{psmallmatrix} w & x\end{psmallmatrix}$ for $\langle w, x \rangle$, and $\begin{psmallmatrix}w & x\\y & z\end{psmallmatrix}$ for $[\langle w,x \rangle, \langle y,z \rangle] = \langle [w,y], [x,z] \rangle$. Then a map $\begin{psmallmatrix}w & x\\y & z\end{psmallmatrix} \circ \begin{psmallmatrix} p & q\\r & s \end{psmallmatrix}$ is computed as the matrix product $\begin{psmallmatrix}p & q \\ r &  s \end{psmallmatrix} \begin{psmallmatrix}w & x\\y & z\end{psmallmatrix}$.
\\

\noindent Define a map $A \oplus B \xrightarrow{\begin{psmallmatrix}1 & x\\0 & 1\end{psmallmatrix}} A \oplus B$.
\\ The kernel of $\begin{psmallmatrix}1 & x\\0 & 1\end{psmallmatrix}$ is $K \xrightarrow{\begin{psmallmatrix}a & b\end{psmallmatrix}} A \oplus B$ where $0 = K \xrightarrow{\begin{psmallmatrix}a & b\end{psmallmatrix}} A \oplus B \xrightarrow{\begin{psmallmatrix}1 & x\\0 & 1\end{psmallmatrix}} A \oplus B = K \xrightarrow{\begin{psmallmatrix}a & xa+b \end{psmallmatrix}} A \oplus B$, so $a=b=0$.
\\ This shows that $\begin{psmallmatrix}1 & 0\\0 & 1\end{psmallmatrix}$ is monic. Dually, it is epic, hence it has an inverse map $\begin{psmallmatrix}p & q\\r & s\end{psmallmatrix}$.
\\ Since $\begin{psmallmatrix}1 & x\\0 & 1\end{psmallmatrix} \begin{psmallmatrix}p & q\\r & s\end{psmallmatrix} = \begin{psmallmatrix}1 & 0\\0 & 1\end{psmallmatrix}$, we conclude in particular that $q+x = 0$.
\\
\end{proof}

\noindent This upgrades the representable $Hom(A,-)$ (for each $A \in \mathcal{A}$) from a functor $\mathcal{A} \rightarrow Set$ to a functor $\mathcal{A} \rightarrow Ab$, where $Ab$ is the category of abelian groups.
\\

\begin{remark} \label{2.42}
We know that the direct sum $A \oplus B$ is unique up to isomorphism, and may be characterised as a system $A \overset{\iota_1}{\underset{\pi_1}\rightleftarrows} X \overset{\iota_2}{\underset{\pi_2}\leftrightarrows} B $ where $\pi_1 \iota_1 = 1_A, \pi_2 \iota_2 = 1_B, \pi_1 \iota_2=\pi_2 \iota_1=0$, and $\iota_1 \pi_1 + \iota_2 \pi_2 = 1_X$.
\\ Equivalently, it is a system $A \overset{\iota_1}{\underset{\pi_1}\rightleftarrows} X \overset{\iota_2}{\underset{\pi_2}\leftrightarrows} B $ where $\pi_1 \iota_1 = 1_A, \pi_2 \iota_2 = 1_B$, and $A \xrightarrow{\iota_1} X \xrightarrow{\pi_2} B$, $B \xrightarrow{\iota_2} X \xrightarrow{\pi_1} A$ are exact.
\end{remark}

\section{Additives and representables}

To any functor $F: \mathcal{A} \rightarrow \mathcal{B}$ is associated a function $Hom(A_1,A_2) \rightarrow Hom(FA_1,FA_2)$.
\\ If $\mathcal{A}$ and $\mathcal{B}$ are abelian categories, we say $F$ is \textbf{additive} if this function is a group homomorphism (with respect to $+$) for every $A_1, A_2 \in \mathcal{A}$.
\\
The functors $Hom(A,-) : \mathcal{A} \rightarrow Ab$ and $Hom(-,A): \mathcal{A}^{op} \rightarrow Ab$ are additive, because they are left-exact (see Corollary \ref{3.12}).
\\

\begin{thm}
A functor between abelian categories is additive iff it carries direct sums into direct sums.
\end{thm}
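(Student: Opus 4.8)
The plan is to work throughout with the characterisation of the direct sum from Remark \ref{2.42}: a system $A \overset{\iota_1}{\underset{\pi_1}\rightleftarrows} X \overset{\iota_2}{\underset{\pi_2}\leftrightarrows} B$ is a direct sum exactly when $\pi_1\iota_1 = 1_A$, $\pi_2\iota_2 = 1_B$, $\pi_1\iota_2 = \pi_2\iota_1 = 0$ and $\iota_1\pi_1 + \iota_2\pi_2 = 1_X$, together with the defining formula $f + g = [f,g]\circ\Delta_A$, where $\Delta_A = \langle 1_A,1_A\rangle$. I read ``carries direct sums into direct sums'' as: $F$ sends every such system to a system satisfying the same four relations.

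($\Rightarrow$) Suppose $F$ is additive. A functor preserves identities and composites, and a group homomorphism $Hom(A,B) \to Hom(FA,FB)$ carries the unit $0$ to the unit $0$; together with the assumed compatibility with $+$, this shows $F$ preserves each of the four relations. Hence applying $F$ term by term to a direct sum system produces a system $FA \rightleftarrows FX \rightleftarrows FB$ satisfying the same relations, which by Remark \ref{2.42} is a direct sum. The only relation whose preservation needs additivity rather than mere functoriality is $F(\iota_1\pi_1 + \iota_2\pi_2) = F(\iota_1\pi_1) + F(\iota_2\pi_2)$.

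($\Leftarrow$) Suppose $F$ carries direct sum systems to direct sum systems. First I would note that $F$ preserves the zero object: $0$ is the direct sum of $0$ with $0$ (all structure maps being $1_0 = 0$), so applying $F$ to $\iota_1\pi_1 + \iota_2\pi_2 = 1_0$ gives $1_{F0} + 1_{F0} = 1_{F0}$, whence $1_{F0} = 0$ and $F0$ is a zero object; consequently $F$ sends every zero map to a zero map, by factoring it through $0$. Now fix objects $A, B$ and maps $f, g : A \to B$. Feeding the standard direct sum system of $A \oplus A$ to the hypothesis, $F(A\oplus A)$ equipped with $F\iota_1, F\iota_2, F\pi_1, F\pi_2$ is a direct sum of $FA$ with $FA$; since any two direct sums of the same pair of objects are canonically isomorphic, the comparison map $\Theta : FA \oplus FA \to F(A\oplus A)$ with $\Theta \iota_i = F\iota_i$ is an isomorphism, and one checks $F\pi_i \circ \Theta = \pi_i$ as well, using that $F\pi_i\circ F\iota_j = F(\pi_i\iota_j)$ is $1_{FA}$ for $i=j$ and $0$ for $i \ne j$ (here is where $F$ killing zero maps is used). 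Then $F(f+g) = F([f,g]) \circ F(\Delta_A)$; by the product universal property of $F(A\oplus A)$ the second factor equals $\Theta \circ \Delta_{FA}$ (both become $1_{FA}$ under each $F\pi_i$), and by the coproduct universal property the first factor equals $[Ff,Fg]\circ\Theta^{-1}$ (both become $Ff$, resp. $Fg$, under each $F\iota_i$). Composing, the $\Theta$'s cancel and $F(f+g) = [Ff,Fg]\circ\Delta_{FA} = Ff + Fg$.

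I expect the only real friction to be the bookkeeping in the converse: $F(A\oplus A)$ is merely canonically isomorphic to $\mathcal{B}$'s chosen biproduct $FA\oplus FA$, not equal to it, so one must carry the isomorphism $\Theta$ through the computation and verify it is compatible with both families of structure maps (the projections being where one needs $F$ to kill zero maps). Once that identification is in place, the rest is a formal unwinding of the two universal properties together with the definition $f+g = [f,g]\circ\Delta_A$.
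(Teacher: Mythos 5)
Your proof is correct and takes essentially the same route as the paper: characterise the direct sum by the equations of Remark \ref{2.42}, and for the converse compute $F(f+g)$ by applying $F$ to the factorisation $f+g=[f,g]\circ\Delta_A$. You are simply more explicit than the paper about the comparison isomorphism $\Theta: FA\oplus FA \rightarrow F(A\oplus A)$ and about $F$ killing zero maps, two points the paper's computation silently absorbs.
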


\begin{proof}
Suppose $A \overset{\iota_1}{\underset{\pi_1}\rightleftarrows} X \overset{\iota_2}{\underset{\pi_2}\leftrightarrows} B $ is a direct sum system in $\mathcal{A}$ (so $\pi_1 \iota_1 = 1_A, \pi_2 \iota_2 = 1_B, \pi_1 \iota_2=\pi_2 \iota_1=0$, and $\iota_1 \pi_1 + \iota_2 \pi_2 = 1_X$).
\\

\noindent Applying a functor $F: \mathcal{A} \rightarrow \mathcal{B}$ yields a direct sum system in $\mathcal{B}$, if $F$ is additive.
\\ Conversely, suppose applying $F: \mathcal{A} \rightarrow \mathcal{B}$ yields a direct sum system in $\mathcal{B}$ . Let us show that $F(x+y)=F(x)+F(y)$ for any $x,y: A \rightarrow B$.
\\ By definition, $A \xrightarrow{x+y} B = A \xrightarrow{\begin{psmallmatrix}1 & 1\end{psmallmatrix}} A \oplus A \xrightarrow{\begin{psmallmatrix} x\\y\end{psmallmatrix}} B$, so
$$F(A \xrightarrow{x+y} B) = FA \xrightarrow{F\begin{psmallmatrix}1 & 1\end{psmallmatrix}} F(A \oplus A) \xrightarrow{F\begin{psmallmatrix} x\\y\end{psmallmatrix}} FB = FA \xrightarrow{\begin{psmallmatrix}1 & 1\end{psmallmatrix}} F(A \oplus A) \xrightarrow{\begin{psmallmatrix} Fx\\Fy\end{psmallmatrix}} FB = FA \xrightarrow{Fx+Fy} FB .$$
\end{proof}

\noindent Working over an abelian category $\mathcal{A}$, let us call a sequence $\cdots \rightarrow A_1 \rightarrow A_2 \rightarrow A_3 \rightarrow \cdots$ \textbf{exact} if for each $i$, the kernel of $A_i \rightarrow A_{i+1}$ equals the image of $A_{i-1} \rightarrow A_i$ as subobjects of $A_i$.
\\ An exact sequence of the form $0 \rightarrow A' \rightarrow A \rightarrow A''$ is \textbf{left-exact}, and one of the form $A' \rightarrow A \rightarrow A'' \rightarrow 0$ is \textbf{right-exact}.
\\ We say a functor between abelian categories is \textbf{left-exact} if it carries left-exact sequences into left-exact sequences, \textbf{right-exact} if it carries right-exact sequences into right-exact sequences, and \textbf{exact} if it is both.

\begin{cor} \label{3.12}
Any left-exact or right-exact functor is additive.
\\
\end{cor}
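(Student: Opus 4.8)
The plan is to reduce everything to the preceding theorem (a functor between abelian categories is additive iff it carries direct sums into direct sums) and to exploit the ``exactness'' description of direct sums recorded in Remark~\ref{2.42}. So suppose first that $F \colon \mathcal{A} \to \mathcal{B}$ is left-exact, and let $A \overset{\iota_1}{\underset{\pi_1}\rightleftarrows} X \overset{\iota_2}{\underset{\pi_2}\leftrightarrows} B$ be a direct sum system in $\mathcal{A}$. I want to show that its image under $F$ is a direct sum system in $\mathcal{B}$, and the second characterisation in Remark~\ref{2.42} tells me exactly what to check.

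The retraction identities are the cheap half: applying $F$ to $\pi_1\iota_1 = 1_A$ and $\pi_2\iota_2 = 1_B$ and using plain functoriality gives $F(\pi_1)F(\iota_1) = 1_{FA}$ and $F(\pi_2)F(\iota_2) = 1_{FB}$. So everything comes down to the two exactness conditions: that $FA \xrightarrow{F\iota_1} FX \xrightarrow{F\pi_2} FB$ and $FB \xrightarrow{F\iota_2} FX \xrightarrow{F\pi_1} FA$ are exact (at the middle term).

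For the first, I would observe that in $\mathcal{A}$ the sequence $0 \to A \xrightarrow{\iota_1} X \xrightarrow{\pi_2} B$ is left-exact in the sense defined: $\iota_1$ is split monic hence monic (so $Im(\iota_1) = \iota_1$ as a subobject), and exactness at $X$, i.e.\ $Ker(\pi_2) = Im(\iota_1)$, is precisely the exactness clause of Remark~\ref{2.42}. Since $F$ is left-exact it carries this to the left-exact sequence $0 \to FA \xrightarrow{F\iota_1} FX \xrightarrow{F\pi_2} FB$, and in particular $FA \xrightarrow{F\iota_1} FX \xrightarrow{F\pi_2} FB$ is exact. Repeating the argument with the two summands interchanged (using $0 \to B \xrightarrow{\iota_2} X \xrightarrow{\pi_1} A$) gives exactness of $FB \xrightarrow{F\iota_2} FX \xrightarrow{F\pi_1} FA$. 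By Remark~\ref{2.42}, $FA \overset{F\iota_1}{\underset{F\pi_1}\rightleftarrows} FX \overset{F\iota_2}{\underset{F\pi_2}\leftrightarrows} FB$ is then a direct sum system, so $F$ carries direct sums into direct sums and is additive by the preceding theorem.

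The right-exact case is entirely dual: one uses instead that $A \xrightarrow{\iota_1} X \xrightarrow{\pi_2} B \to 0$ and $B \xrightarrow{\iota_2} X \xrightarrow{\pi_1} A \to 0$ are right-exact in $\mathcal{A}$ ($\pi_1,\pi_2$ being split epic, with the kernel/image condition again supplied by Remark~\ref{2.42}), applies the right-exact $F$, and reads off the same two exactness conditions; alternatively one can note that $F$ is right-exact precisely when $F^{\mathrm{op}}\colon \mathcal{A}^{\mathrm{op}} \to \mathcal{B}^{\mathrm{op}}$ is left-exact and that both ``additive'' and ``direct sum'' are self-dual notions. I don't expect a genuine obstacle here; the only point that wants a moment's care is the bookkeeping that the short sequences extracted from a direct sum system really do qualify as left- (resp.\ right-) exact sequences in the stated sense — that a split mono equals its own image and a split epi has kernel equal to that image — which is exactly the content of Remark~\ref{2.42}.
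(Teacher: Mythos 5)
Your proof is correct and follows essentially the same route as the paper's: both reduce to the preceding theorem via the second characterisation of direct sum systems in Remark~\ref{2.42} and observe that the retraction identities and the two exactness conditions are preserved by a left-exact (dually, right-exact) functor. The paper states this in one line; you have merely spelled out the verification that the relevant short sequences qualify as left-/right-exact sequences, which is a worthwhile elaboration but not a different argument.
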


\begin{proof}
If $A \overset{\iota_1}{\underset{\pi_1}\rightleftarrows} X \overset{\iota_2}{\underset{\pi_2}\leftrightarrows} B $ is a direct sum system in $\mathcal{A}$ (so $\pi_1 \iota_1 = 1_A, \pi_2 \iota_2 = 1_B$, and $A \xrightarrow{\iota_1} X \xrightarrow{\pi_2} B$, $B \xrightarrow{\iota_2} X \xrightarrow{\pi_1} A$ are exact), then these conditions are preserved by left-exact or right-exact functors.
\end{proof}

\noindent Let us say that a functor $F: \mathcal{A} \rightarrow \mathcal{B}$ is \textbf{faithful}, or an \textbf{embedding}, if for any $A_1, A_2 \in \mathcal{A}$ we have that the function $Hom(A_1,A_2) \rightarrow Hom(FA_1,FA_2)$ is injective.
\\

\begin{lem} \label{2.21}
For $A \rightarrow B \rightarrow C$ the following conditions are equivalent:
\begin{enumerate}
    \item $Im(A \rightarrow B) = Ker(B \rightarrow C)$;
    \item $Cok(A \rightarrow B) = Coim(B \rightarrow C)$;
    \item $A \rightarrow B \rightarrow C = 0$ and $K \rightarrow B \rightarrow F =0$,
\end{enumerate}
where $K \rightarrow B$ is a kernel of $B \rightarrow C$, and $B \rightarrow F$ is a cokernel of $A \rightarrow B$.
\end{lem}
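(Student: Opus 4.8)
The plan is to recognise that condition (3) is precisely the conjunction of the two containments whose joint truth is the equality in condition (1), and that conditions (1) and (2) correspond to each other under the order‑reversing bijection $Cok,Ker$ of Theorem~\ref{2.11}. Throughout I would write $I \to B$ for the monic representing $Im(A \to B) = KerCok(A \to B)$; by the Fact on images, $A \to B$ factors as $A \to I \to B$ with $A \to I$ epic. Since $B \to F$ is a cokernel of $A \to B$, the subobject $I \to B$ is by definition nothing but $Ker(B \to F)$.

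First I would prove (1) $\Leftrightarrow$ (3) by analysing each clause of (3) separately. The equation $A \to B \to C = 0$ says exactly that $A \to B$ factors through the kernel $K \to B$ of $B \to C$; because $Im(A \to B)$ is the smallest subobject of $B$ through which $A \to B$ factors, this happens iff $I \subset K$, that is, $Im(A \to B) \subset Ker(B \to C)$. The second clause $K \to B \to F = 0$ says exactly that $K \to B$ factors through $Ker(B \to F) = I \to B$, that is, $Ker(B \to C) \subset Im(A \to B)$. Hence (3) is precisely ``$I \subset K$ and $K \subset I$'', i.e. $Im(A \to B) = Ker(B \to C)$, which is (1).

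For (1) $\Leftrightarrow$ (2) I would invoke Theorem~\ref{2.11}: $Cok$ and $Ker$ are mutually inverse order‑reversing bijections between the subobjects of $B$ and the quotient objects of $B$. Applying $Cok$ to the subobject equality (1) gives $CokKerCok(A \to B) = CokKer(B \to C)$; the left‑hand side collapses to $Cok(A \to B)$ and the right‑hand side is $Coim(B \to C)$ by definition, so this is exactly (2). As $Cok$ is a bijection the converse implication also holds (alternatively, apply $Ker$ to (2) and use $KerCok = \mathrm{id}$). This closes the cycle of equivalences.

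I do not anticipate a real obstacle: the argument is a matter of unwinding the universal properties of kernels and cokernels and the definition of the image, together with the formal bijection of Theorem~\ref{2.11}. The only spot that needs a little care is the identification $Ker(Cok(A \to B)) = Im(A \to B)$ used in the second clause of (3) — which is exactly the first bullet of the Fact characterising images — and, more generally, keeping straight in each step which of two subobjects factors through the other.
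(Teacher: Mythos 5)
Your proposal is correct. Your treatment of (1) $\Leftrightarrow$ (3) is essentially the paper's argument, just packaged more symmetrically: you show each clause of (3) is separately equivalent to one of the two containments $Im(A\to B)\subset Ker(B\to C)$ and $Ker(B\to C)\subset Im(A\to B)$, whereas the paper proves the two implications between (1) and (3) as a whole; the underlying facts used (factoring through kernels, minimality of the image, $Im = KerCok$) are identical. Where you genuinely diverge is in how (2) enters: the paper disposes of (2) $\Leftrightarrow$ (3) with a one-word appeal to duality, while you prove (1) $\Leftrightarrow$ (2) directly by transporting the subobject equality through the order-reversing bijection $Cok$/$Ker$ of Theorem~\ref{2.11}, using $CokKerCok = Cok$ and $CokKer = Coim$. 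Both are legitimate closures of the cycle of equivalences; the duality argument is shorter but asks the reader to trust that the dual of the statement really is the statement with (1) and (2) swapped, whereas your route makes the correspondence between (1) and (2) completely explicit and showcases the Galois-connection content of Theorem~\ref{2.11}. The only cosmetic imprecision is your phrase ``$Coim(B\to C)$ by definition'' --- the identification $Coim = CokKer$ is the stated Fact about coimages rather than the definition --- but you flag exactly this point yourself, so nothing is missing.
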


\begin{proof}
We prove equivalence of the first and third items; equivalence of the second and third is proven dually.
\begin{itemize}
    \item The first item implies the third:
    \\ $A \rightarrow B \rightarrow C = A \rightarrow Im(A \rightarrow B) \rightarrow B \rightarrow C = A \rightarrow Ker(B \rightarrow C) \rightarrow B \rightarrow C =0$.
    \\ $K \rightarrow B \rightarrow F = 0$ because $K \rightarrow B$ is a kernel of $B \rightarrow F$:
    $$K \rightarrow B = Ker(B \rightarrow C) = Im(A \rightarrow B) = KerCok(A \rightarrow B) = Ker(A \rightarrow B).$$
    \item The third item implies the first:
    \\ Since $A \rightarrow B \rightarrow C =0$, $A \rightarrow B$ factors through $Ker(B \rightarrow C)$. \\ Therefore, by definition of image, $Im(A \rightarrow B) \subset Ker(B \rightarrow C)$.
    \\ On the other hand, since $K \rightarrow B \rightarrow F =0$, $K \rightarrow B$ factors through the kernel of $B \rightarrow F$:
    $$Ker(B \rightarrow C) = K \rightarrow B \subset Ker(B \rightarrow F) = KerCok(A \rightarrow B) = Im(A\rightarrow B).$$
\end{itemize}
\end{proof}

\begin{thm} \label{3.21}
Let $F: \mathcal{A} \rightarrow \mathcal{B}$ be an additive functor between abelian categories. The following are equivalent:
\begin{itemize}
    \item[(a)] F is an embedding.
    \item[(b)] F carries noncommutative diagrams into noncommutative diagrams.
    \item[(c)] F carries nonexact sequences into nonexact sequences.
\end{itemize}
\end{thm}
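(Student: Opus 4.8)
The plan is to prove the cyclic chain of implications $(a) \Rightarrow (b) \Rightarrow (c) \Rightarrow (a)$, exploiting the fact that $F$ is additive so that ``noncommutative'' and ``nonexact'' can both be reformulated in terms of a single map being nonzero. The key observation throughout is that for additive $F$, a diagram commutes iff a certain difference of composites vanishes, and $F$ preserves differences of composites; so commutativity questions reduce to the question of whether $F$ kills a nonzero map.

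\textbf{$(a) \Rightarrow (b)$.} Suppose a diagram in $\mathcal{A}$ fails to commute: there are two paths with composites $f, g : A_1 \to A_2$ and $f \neq g$. Since $F$ is additive, $Ff - Fg = F(f-g)$, and $f - g \neq 0$. By faithfulness the map $Hom(A_1, A_2) \to Hom(FA_1, FA_2)$ is injective, so $F(f - g) \neq 0$, hence $Ff \neq Fg$ and the image diagram fails to commute.

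\textbf{$(b) \Rightarrow (c)$.} Suppose $A \xrightarrow{f} B \xrightarrow{g} C$ is not exact at $B$. I would like to detect this failure via a noncommutative diagram built functorially from the data. Using Lemma \ref{2.21}, exactness at $B$ is equivalent to the conjunction $gf = 0$ \emph{and} $(B \to F)\circ(K \to B) = 0$, where $K \to B$ is a kernel of $g$ and $B \to F$ a cokernel of $f$. If $gf \neq 0$, then the triangle with sides $f$, $g$ and the zero map $A \to C$ fails to commute, and applying $(b)$ shows $Fg \cdot Ff \neq 0$, so the image sequence is not even a complex, hence nonexact. If instead $gf = 0$ but the composite $K \to B \to F$ is nonzero, then the square expressing that this composite equals the zero map $K \to F$ is noncommutative; by $(b)$ its image is noncommutative, so $F(K\to B\to F) \neq 0$. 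The remaining work is to argue that this forces the image sequence $FA \to FB \to FC$ to be nonexact: here one uses that $F$ additive preserves the relevant zero composites ($F$ of $gf=0$ gives $Fg\cdot Ff = 0$), so $FA \to FB \to FC$ is a complex, and one checks that a nonzero $F(K\to B)\to FB\to F(B\to F)$ obstructs the inclusion $Ker(Fg) \subseteq Im(Ff)$ — essentially because $F(K\to B)$ factors through $Ker(Fg)$ while $Im(Ff)$ lands in $Ker(F(B\to F))$ if the sequence were exact. This bookkeeping step is where I expect the main friction, since $F$ need not preserve kernels or images, so one cannot identify $Ker(Fg)$ with $F(K)$; the argument must stay at the level of ``some composite is nonzero'' rather than ``some object is wrong.''

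\textbf{$(c) \Rightarrow (a)$.} Suppose $F$ is not faithful: some nonzero $x : A \to B$ has $Fx = 0$. Then the sequence $A \xrightarrow{1_A} A \xrightarrow{x} B$ is nonexact at the middle $A$ (its kernel at $A$ is $0$ since $1_A$ is monic — wait, one should instead pick a cleaner witness). A robust choice: consider $0 \to A \xrightarrow{x} B$; since $x \neq 0$ this is \emph{not} left-exact precisely when $x$ is not monic, which we do not know. Better: take the sequence $K \xrightarrow{k} A \xrightarrow{x} B$ where $k = Ker(x)$; since $x \neq 0$, $k$ is not an isomorphism, and $Im(k) = K' \subsetneq A$ while $Ker(x) = $ that same subobject, so this one \emph{is} exact — not useful. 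The clean witness is $A \xrightarrow{x} B \xrightarrow{1_B} B$ versus exactness forcing $Im(x) = Ker(1_B) = 0$, i.e. $x = 0$; so when $x \neq 0$ this sequence is nonexact, yet $F$ sends it to $FA \xrightarrow{0} FB \xrightarrow{1} FB$, which \emph{is} exact (image of a zero map is $0$, kernel of identity is $0$). This contradicts $(c)$, so $F$ must be faithful. Throughout, additivity is used to ensure $F(0) = 0$ and $F$ of identities are identities, and to translate between the three conditions; the only genuinely delicate point is the middle implication's reduction of nonexactness to a single nonvanishing composite, for which Lemma \ref{2.21} is the essential tool.
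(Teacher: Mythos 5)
Your proposal is correct and follows essentially the same route as the paper: the decisive tool is Lemma \ref{2.21} reducing nonexactness to one of two nonvanishing composites, the two-case analysis with the factorizations of $FK \to FB$ through $Ker(Fg)$ and of $FB \to FF$ through $Cok(Ff)$ is exactly the paper's argument, and your witness $A \xrightarrow{x} B \xrightarrow{1_B} B$ for $(c)\Rightarrow(a)$ is just the mirror of the paper's $A' \xrightarrow{1} A' \xrightarrow{x} A$. The only difference is organizational (a cycle $(a)\Rightarrow(b)\Rightarrow(c)\Rightarrow(a)$ rather than pairwise equivalences), and the ``bookkeeping step'' you flag is resolved precisely as you sketch it.
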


\begin{proof}
\begin{itemize}
    \item The first two statements are trivially equivalent.
    \item The third implies the first:
    Let $A' \xrightarrow{x} A \neq 0$. Then $A' \xrightarrow{1} A' \xrightarrow{x} A$ is not exact, so neither is $FA' \xrightarrow{1} FA' \xrightarrow{Fx} FA$, hence $Fx \neq 0$.
    \item The first implies the third:
    \\ Let $A' \rightarrow A \rightarrow A''$ be a nonexact sequence in $\mathcal{A}$. Let $0 \rightarrow K \rightarrow A \rightarrow A''$ and $A' \rightarrow A \rightarrow G \rightarrow 0$ be exact.
    By Lemma \ref{2.21}, either $A' \rightarrow A \rightarrow A'' \neq 0$ or $K \rightarrow A \rightarrow G \neq 0$. By assumption, $F$ applied to a nonzero map is nonzero, so we have two cases:
    \begin{enumerate}
        \item If $FA' \rightarrow FA \rightarrow FA'' \neq 0$ then by Lemma \ref{2.21} $FA' \rightarrow FA \rightarrow FA''$ is nonexact.
        \item If $FK \rightarrow FA \rightarrow FG \neq 0$, let $0 \rightarrow L \rightarrow FA \rightarrow FA''$ and $FA' \rightarrow FA \rightarrow H \rightarrow 0$ be exact in $\mathcal{B}$.
    \\ Since $FK \rightarrow FA \rightarrow FA'' = 0$, $FK \rightarrow FA$ factors through the kernel as $FK \rightarrow L \rightarrow FA$.
    \\ Since $FA' \rightarrow FA \rightarrow FG = 0$, $FA \rightarrow FG$ factors through the cokernel as $FA \rightarrow H \rightarrow FG$.
    \\ We see that $FA' \rightarrow FA \rightarrow FA''$ cannot be exact, otherwise Lemma \ref{2.21} would imply \\ $L \rightarrow FA \rightarrow H = 0$, then $$FK \rightarrow FA \rightarrow FG =FK \rightarrow L \rightarrow FA \rightarrow H \rightarrow FG= 0,$$ contradicting our assumption.
    \end{enumerate}
\end{itemize}
\end{proof}

\begin{cor} \label{exemb}
If a functor $F: \mathcal{A} \rightarrow \mathcal{B}$ between abelian categories is an exact embedding, then the exactness (resp. commutativity) of a diagram in $\mathcal{A}$ is equivalent to the exactness (resp. commutativity) of the $F$-image of the diagram.
\\
\end{cor}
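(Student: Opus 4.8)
The plan is to deduce everything from Theorem \ref{3.21}, using that an exact functor is automatically additive. First I would record that $F$ is additive: being exact it is in particular right-exact, so Corollary \ref{3.12} applies. Hence the hypothesis of Theorem \ref{3.21} is satisfied, and since $F$ is an embedding we may freely invoke conditions (b) and (c): $F$ carries noncommutative diagrams to noncommutative diagrams, and nonexact sequences to nonexact sequences.

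For the commutativity equivalence, the forward implication is pure functoriality: if a diagram in $\mathcal{A}$ commutes, applying $F$ to each composite path shows the $F$-image commutes. For the converse, suppose the $F$-image commutes but the original diagram does not; then by Theorem \ref{3.21}(b) the $F$-image would be noncommutative, a contradiction. So commutativity of a diagram is equivalent to commutativity of its $F$-image.

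For exactness, the converse direction (exactness of the $F$-image forcing exactness of the diagram) is again immediate: a nonexact diagram in $\mathcal{A}$ contains a nonexact three-term stretch $A \to B \to C$, and by Theorem \ref{3.21}(c) its image $FA \to FB \to FC$ is nonexact, so the $F$-image is not exact. The forward direction is where the only real work lies. Here I would reduce exactness of a diagram to exactness at each node, i.e.\ to exactness of individual sequences $A \xrightarrow{f} B \xrightarrow{g} C$, and then verify that an exact functor preserves this. Concretely: because $F$ is left-exact it sends the kernel $K \to B$ of $g$ to $\mathrm{Ker}(Fg)$, and because it is right-exact it sends the cokernel $B \to F'$ of $f$ to $\mathrm{Cok}(Ff)$ (writing $F'$ to avoid clashing with the functor $F$); moreover $F$ additive sends zero morphisms and zero composites to zero. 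By Lemma \ref{2.21}, exactness of $A \xrightarrow{f} B \xrightarrow{g} C$ is equivalent to $gf = 0$ together with $(K \to B \to F') = 0$, and both conditions transfer verbatim under $F$; hence $FA \to FB \to FC$ is exact, and the diagram is carried to an exact one node by node.

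The main obstacle — more bookkeeping than genuine difficulty — is precisely this last point: confirming that an exact functor upgrades from preserving short exact sequences to preserving exactness of an arbitrary diagram. Lemma \ref{2.21} together with the preservation of kernels and cokernels by left- and right-exact functors makes this routine, but one must be a little careful to note that $F$ being exact supplies both halves simultaneously. Once the three-term case is settled, the general diagram follows, completing the exactness equivalence alongside the commutativity one.
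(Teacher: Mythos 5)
Your proposal is correct and follows exactly the route the paper intends: the corollary is stated as an immediate consequence of Theorem \ref{3.21} (whose hypotheses you rightly check via Corollary \ref{3.12}), with the converse directions supplied by functoriality of $F$ for commutativity and by Lemma \ref{2.21} together with preservation of kernels and cokernels for exactness. The paper offers no written proof, and your spelled-out verification that an exact functor preserves exactness of an arbitrary three-term sequence is precisely the bookkeeping the paper leaves implicit.
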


\noindent Let us say an object $P$ in an abelian category $\mathcal{A}$ is \textbf{projective} if the functor $Hom(P,-): \mathcal{A} \rightarrow Ab$ is exact.
\\ Of course, $Hom(A,-)$ is left-exact for any $A \in \mathcal{A}$, so we may equally just demand right-exactness in this definition.
\\ Unpacking the definition, we see that $P$ is projective iff for any map $P \xrightarrow{p} B$ and epic $A \xrightarrow{e} B$, there is a map $P \xrightarrow{\tilde{p}} A$ (a \textit{lift} of $p$) such that $e \circ \tilde{p} = p$.
\\

\begin{prop} \label{3.32}
If $\{P_j\}$ is a family of projectives in an abelian category, then the direct sum $\Sigma_j P_j$ (if it exists) is projective.
\end{prop}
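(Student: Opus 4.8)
The plan is to verify directly the lifting characterisation of projectivity recorded just above the statement: writing $P = \Sigma_j P_j$ with structure maps $P_j \xrightarrow{\iota_j} P$, and given an epic $A \xrightarrow{e} B$ together with an arbitrary map $P \xrightarrow{p} B$, I must produce a lift $P \xrightarrow{\tilde{p}} A$ with $e \tilde{p} = p$.

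First I would pull $p$ back along each inclusion: for each index $j$ form the composite $P_j \xrightarrow{\iota_j} P \xrightarrow{p} B$. Since $P_j$ is projective and $e$ is epic, this composite lifts through $e$, so there is a map $P_j \xrightarrow{q_j} A$ with $e q_j = p \iota_j$. Next I would assemble these local lifts into a single global one using the universal property of the sum: the family $\{q_j : P_j \rightarrow A\}_j$ determines a unique map $P \xrightarrow{\tilde{p}} A$ satisfying $\tilde{p}\iota_j = q_j$ for every $j$.

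It then remains to check $e\tilde{p} = p$. Both $e\tilde{p}$ and $p$ are maps $P \rightarrow B$, and they agree after precomposition with each $\iota_j$, since $e\tilde{p}\iota_j = e q_j = p\iota_j$. By the uniqueness half of the universal property of $\Sigma_j P_j$, this forces $e\tilde{p} = p$, so $\tilde{p}$ is the required lift and $P$ is projective.

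The only point needing any care — the ``main obstacle'', such as it is — is precisely that last uniqueness step: a map out of the sum is determined by its restrictions to the summands, which holds for an arbitrary (not merely finite) index set, so no appeal to additivity or to finiteness of the family is needed. (If the sum fails to exist the claim is vacuous, which is why the statement is hedged in parentheses.)
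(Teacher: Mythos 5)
Your argument is correct and is essentially identical to the paper's proof: lift each composite $p\iota_j$ through $e$ using projectivity of $P_j$, assemble the lifts via the universal property of the sum, and conclude $e\tilde{p}=p$ by checking agreement on each summand. No gaps.
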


\begin{proof}
A map $\Sigma_j P_j \xrightarrow{p} B$ is given by individual maps $P_i \xrightarrow{p_j} B$. If we have an epic $A \xrightarrow{e} B$, there are componentwise lifts a map $P_j \xrightarrow{\tilde{p_j}} A$. That is, for each $i$, $e \circ \tilde{p_j} = p_j$. These collect into a map $\Sigma_j P_j \xrightarrow{\tilde{p}} A$ which lifts $p$: $e \circ \tilde{p} = p$, because these maps agree on each $P_j$:
$$(e \circ \tilde{p}) \circ \iota_j = e \circ (\tilde{p} \circ \iota_j)= e \circ (\tilde{p_j}) = p_j = p \circ \iota_j,$$
where $\iota_j$ is the $j$th inclusion into the sum.
\\
\end{proof}

\noindent Let us say an object $G \in \mathcal{A}$ is a \textbf{generator} if the functor $Hom(G,-): \mathcal{A} \rightarrow Ab$ is an embedding.
\\

\begin{prop} \label{3.33}
The following are equivalent:
\begin{itemize}
    \item $G$ is a generator.
    \item For every $A \rightarrow B \neq 0$ there is a map $G \rightarrow A$ such that $G \rightarrow A \rightarrow B \neq 0$.
    \item For every proper subobject of $A$ there is a map $G \rightarrow A$ whose image is not contained in the given subobject.
\end{itemize}
\end{prop}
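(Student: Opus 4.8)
The plan is to prove the chain of implications in a cycle: first (generator) $\Rightarrow$ (maps $G \to A$ detect nonzero $A \to B$), then that $\Rightarrow$ (maps $G \to A$ escape every proper subobject), and finally that $\Rightarrow$ (generator). The first implication is essentially the definition unwound: if $Hom(G,-)$ is an embedding, then for $A \xrightarrow{x} B \neq 0$ we have $Hom(G,x) \neq 0$, which literally means there is some $G \to A$ with $G \to A \to B \neq 0$.

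For the second implication, suppose $A' \xrightarrow{i} A$ is a proper subobject. Since the subobject is proper, the map $A \to Cok(i) = A/A'$ is nonzero (if it were zero, then $1_A$ would factor through $Ker$ of the cokernel, which is $i$ by Theorem \ref{2.11}, forcing $i$ to be epic hence an isomorphism by balancedness). So by the second condition there is $G \xrightarrow{g} A$ with $G \to A \to A/A' \neq 0$. I claim $Im(g) \not\subset A'$: if it were, then $g$ would factor as $G \to A' \xrightarrow{i} A$, whence $G \to A \to A/A' = G \to A' \to A \to A/A' = 0$ since $A \to A/A'$ kills $A'$ by definition of cokernel — contradiction.

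For the third implication, I want to show $Hom(G,-)$ is an embedding, i.e.\ detects nonzero maps (by Theorem \ref{3.21}, since $Hom(G,-)$ is additive). Given $A \xrightarrow{x} B \neq 0$, factor $x$ through its image as $A \twoheadrightarrow Im(x) \hookrightarrow B$; since $x \neq 0$, $Im(x)$ is a nonzero subobject of $B$, so $0 \to B$ is a proper subobject. Apply the third condition to the proper subobject $0 \rightarrowtail B$? That is not quite the right subobject — I should instead work inside $A$: take the subobject $Ker(x) \rightarrowtail A$, which is proper because $x \neq 0$ means $A \xrightarrow{x} B$ does not factor through $0$, equivalently $Coim(x) \neq$ the zero quotient, equivalently $Ker(x) \neq A$. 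By the third condition there is $G \xrightarrow{g} A$ with $Im(g) \not\subset Ker(x)$. Then $g \circ x \neq 0$: if $xg = 0$, then $g$ factors through $Ker(x)$, so $Im(g) \subset Ker(x)$, contradiction. Hence $Hom(G,x) \neq 0$ and $G$ is a generator.

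The main obstacle, such as it is, is bookkeeping around the dictionary between "proper subobject", "nonzero cokernel", and "the quotient map is nonzero", and the dual statements relating "$Ker(x) \neq A$" to "$x \neq 0$"; these all follow from Theorem \ref{2.11}, balancedness, and the Fact characterising monos/epis via $Ker$/$Coim$, but one must invoke them carefully rather than by picture-chasing, since we are in a general abelian category with no elements available. Everything else is a routine factorisation argument through kernels, cokernels, and images.
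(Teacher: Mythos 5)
Your proposal is correct and follows essentially the same route as the paper: unpack the definition of embedding to relate the first two conditions, pass from a nonzero map to a proper subobject via the cokernel (and back via the kernel), and use the factorisation of a map through its image/kernel to detect containment. The only differences are cosmetic — you arrange the implications in a one-directional cycle rather than two bidirectional equivalences, and there is a harmless typo ($g \circ x$ for $x \circ g$) in the last step.
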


\begin{proof}
\begin{itemize}
    \item Unpacking the definition, $G$ is a generator if and only if the function $$Hom(A,B) \rightarrow Hom(Hom(G,A),Hom(G,B)), f \mapsto f \circ -$$ is injective.
\\ This is if and only if for any nonzero $f \in Hom(A,B)$, the map $f \circ -$ is nonzero, meaning there is some $g \in Hom(G,A)$ with $f \circ g$ nonzero.
\\ Hence the first two statements are equivalent.

    \item The second statement implies the third.
    \\ Let $C \xrightarrow{s} A$ be a proper subobject. In particular $s$ is not epic, otherwise it would be an isomorphism as abelian categories are balanced. Take its cokernel $A \xrightarrow{c} B \neq 0$. There is some $G \xrightarrow{g} A$ with $cg \neq 0$. $Im(g)= I \xrightarrow{i} A$ is not contained in $C$. If it were, then by definition there would be a map $I \xrightarrow{f} C$ with $s \circ f = i $. Then
    $$cg = G \rightarrow I \xrightarrow{i} A \xrightarrow{c} B = G \rightarrow I \xrightarrow{f} C \xrightarrow{s} A \xrightarrow{c} B = 0,$$
    since $c$ was a cokernel of $s$. This is a contradiction.
    
    \item The third statement implies the second.
    \\ Given $A \xrightarrow{c} B \neq 0$, its kernel $K$ is a proper subobject of $A$, so there is some $G \xrightarrow{g} A$ whose image is not contained in $K$. In particular $cg \neq 0$.
    \\
\end{itemize}
\end{proof}

\begin{prop}
If $P$ is projective then it is a generator iff $Hom(P,A)$ is nontrivial for all nontrivial $A$.
\end{prop}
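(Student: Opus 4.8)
The plan is to prove the two implications separately, observing that only the reverse one uses projectivity.

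First I would dispatch the forward direction. Suppose $P$ is a generator and let $A$ be nontrivial; then $1_A \colon A \to A$ is a nonzero map (if it were zero, $A$ would be a zero object). Applying the second clause of Proposition \ref{3.33} with the map ``$A \to B$'' taken to be $1_A$ produces some $P \to A$ with $P \to A \xrightarrow{1_A} A \neq 0$. But that composite is just the chosen map $P \to A$ itself, so $Hom(P,A)$ is nontrivial. Note that this argument does not invoke projectivity at all.

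For the converse, assume $P$ is projective and that $Hom(P,A) \neq 0$ for every nontrivial $A$; I would verify the second characterisation of generators in Proposition \ref{3.33}. Given $A \xrightarrow{f} B \neq 0$, factor $f$ through its image as $A \xrightarrow{e} I \xrightarrow{m} B$, where $m$ is the monic representing $Im(f)$ and $e$ is epic (using the Fact that $A \to Im(f)$ is epic). Since $f \neq 0$ we have $I$ nontrivial, hence there is a nonzero $g \colon P \to I$; because $P$ is projective and $e$ is epic, $g$ lifts to $\tilde g \colon P \to A$ with $e\tilde g = g$. Then $f\tilde g = m e \tilde g = m g$, which is nonzero since $g \neq 0$ and $m$ is monic. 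Thus $P \xrightarrow{\tilde g} A \xrightarrow{f} B \neq 0$, so $P$ is a generator.

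The hard part — really the only point requiring thought — is recognising that one should not attempt to lift along $f$ itself (which need not be epic), but along the epimorphism $A \to Im(f)$; the monicity of $Im(f) \hookrightarrow B$ is then precisely what keeps the composite nonzero on the way back up to $B$. Everything else is routine bookkeeping with the image factorization and the lifting property defining projectivity.
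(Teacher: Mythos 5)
Your proof is correct and follows essentially the same route as the paper: the forward direction via $1_A \neq 0$, and the converse via the image factorization $A \to Im(f) \to B$, lifting along the epic $A \to Im(f)$ by projectivity and using monicity of $Im(f) \to B$. The paper merely phrases the converse contrapositively (a non-generator yields a nontrivial $I$ with $Hom(P,I)=0$), but the ingredients are identical.
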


\begin{proof}
\begin{itemize}
    \item Let $P$ be a generator, and $A \neq 0$. Then $A \xrightarrow{1} A \neq 0$, and by the result above, there is some $P \xrightarrow{g} A$ with $g=1g \neq 0$.
    \item Let $P$ be projective, but not a generator. There is some $A \xrightarrow{c} B \neq 0$ such that for every $P \xrightarrow{g} A$, $cg=0$.
    \\ $c$ factors through $Im(c) = I \xrightarrow{i} B$ as $c= A \rightarrow I \xrightarrow{i} B$, where $A \rightarrow I$ is epic.
    \\ Then $I$ is nontrivial with trivial $Hom(P,I)$.
\end{itemize}
\end{proof}

\noindent Say a category is \textbf{well-powered} if the family of subobjects of any object is a set.

\begin{prop} \label{3.35}
An abelian category that has a generator is well-powered.
\end{prop}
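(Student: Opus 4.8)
The plan is to exhibit, for each object $A$, an injection from the collection of subobjects of $A$ into the power set of the genuine set $\operatorname{Hom}(G,A)$, where $G$ is the given generator; since a power set of a set is a set, this forces the subobjects of $A$ to form a set.

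First I would fix a generator $G$ and an object $A$, and to each subobject of $A$, represented by a monic $S \xrightarrow{s} A$, associate the subset
\[
H_S \;:=\; \{\, g \in \operatorname{Hom}(G,A) : g \text{ factors through } s \,\} \;\subseteq\; \operatorname{Hom}(G,A).
\]
This depends only on the equivalence class of $s$, so $S \mapsto H_S$ is a well-defined function from subobjects of $A$ to $\mathcal{P}(\operatorname{Hom}(G,A))$, and it is clearly order-preserving: if $S \subseteq S'$ then any map factoring through $s$ also factors through $s'$.

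The heart of the argument is the converse implication $H_{S_1} \subseteq H_{S_2} \Rightarrow S_1 \subseteq S_2$. Given subobjects $S_1, S_2$ of $A$, form their intersection $S_1 \cap S_2$, which exists by the theorem that every pair of subobjects has an intersection, and which sits inside $S_1$. Suppose $S_1 \cap S_2$ were a \emph{proper} subobject of $S_1$. Then by the third characterisation in Proposition \ref{3.33}, applied to the object $S_1$ and its proper subobject $S_1 \cap S_2$, there is a map $g_1 : G \to S_1$ whose image in $S_1$ is not contained in $S_1 \cap S_2$. Let $g : G \to A$ be the composite of $g_1$ with $S_1 \hookrightarrow A$; then $g \in H_{S_1} \subseteq H_{S_2}$, so $g$ also factors through $S_2 \hookrightarrow A$. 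Since $S_1 \cap S_2$ was realised as the pullback of $S_1 \to A \leftarrow S_2$ and $g$ factors compatibly through both legs, $g_1$ factors through $S_1 \cap S_2 \hookrightarrow S_1$; hence $\operatorname{Im}(g_1) \subseteq S_1 \cap S_2$, contradicting the choice of $g_1$. Therefore $S_1 \cap S_2 = S_1$, i.e. $S_1 \subseteq S_2$.

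Combining the two directions, $S \mapsto H_S$ reflects the partial order on subobjects, so in particular it is injective: $H_{S_1} = H_{S_2}$ forces $S_1 \subseteq S_2$ and $S_2 \subseteq S_1$, hence $S_1 = S_2$ as subobjects. Thus the subobjects of $A$ inject into the set $\mathcal{P}(\operatorname{Hom}(G,A))$ and so form a set; as $A$ was arbitrary, $\mathcal{A}$ is well-powered. The only delicate point I anticipate is the bookkeeping in the middle step: one must invoke precisely the ``image not contained in the subobject'' formulation of the generator property from Proposition \ref{3.33} and match it against the universal property of the intersection established earlier; everything else is formal.
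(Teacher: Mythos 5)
Your proof is correct and follows the same route as the paper: both assign to a subobject $S \le A$ the subset of $\operatorname{Hom}(G,A)$ consisting of maps factoring through it, and conclude that subobjects inject into $\mathcal{P}(\operatorname{Hom}(G,A))$. The paper merely asserts that this assignment distinguishes subobjects, whereas you supply the missing verification (via the intersection-as-pullback and the third characterisation in Proposition~\ref{3.33}), which is exactly the right way to fill that gap.
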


\begin{proof}
Let $G$ be a generator, and $A$ any object. Then a subobject $A' \rightarrow A$ is distinguished by the subset $Hom(G,A') \subset Hom(G,A)$. (We have identified $Hom(G,A')$ with its image under $Hom(G,-)(A' \rightarrow A)$. In other words, there are no more subobjects of $A$ than subsets of $Hom(G,A)$.)
\end{proof}

\begin{prop}\label{3.36}
$G$ is a generator in a cocomplete abelian category $\mathcal{A}$ iff for every $A \in \mathcal{A}$ the obvious map $\Sigma_{Hom(G,A)}G \rightarrow A$ is epic.
\end{prop}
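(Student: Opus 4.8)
The plan is to produce, for each $A \in \mathcal{A}$, the canonical map $\varepsilon_A : \Sigma_{Hom(G,A)}G \to A$ and then play it off against the two reformulations of ``generator'' supplied by Proposition \ref{3.33}. The map $\varepsilon_A$ is defined via the universal property of the sum: cocompleteness of $\mathcal{A}$ guarantees the sum exists, and the family of maps $\{\, g : G \to A \,\}_{g \in Hom(G,A)}$ then induces a unique $\varepsilon_A$ with $\varepsilon_A \circ \iota_g = g$ for every $g$, where $\iota_g$ is the inclusion of the summand indexed by $g$. I will use twice the elementary observation that a map out of a sum is zero precisely when each of its components (its precompositions with the $\iota_g$) is zero --- immediate from the uniqueness clause of the universal property, since the zero map is one such map.

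First I would handle the forward implication. Assume $G$ is a generator and, for contradiction, that $\varepsilon_A$ fails to be epic for some $A$. Since a map is epic iff its image is the whole codomain, $I := Im(\varepsilon_A)$ is then a proper subobject of $A$. Applying the third characterization in Proposition \ref{3.33} to this proper subobject, there is a map $g : G \to A$ whose image is not contained in $I$. But $g = \varepsilon_A \circ \iota_g$ factors through $\varepsilon_A$, and the image of a composite is contained in the image of the outer map, so $Im(g) \subseteq Im(\varepsilon_A) = I$ --- a contradiction. Hence $\varepsilon_A$ is epic for all $A$.

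Next, the converse. Assume $\varepsilon_A$ is epic for every $A$; I would verify the second condition of Proposition \ref{3.33}. Let $f : A \to B$ be any nonzero map. If $f \circ \varepsilon_A$ were zero, then epimorphy of $\varepsilon_A$ would force $f = 0$; so $f \circ \varepsilon_A \neq 0$. By the observation above, there must be some summand $\iota_g$ with $f \circ \varepsilon_A \circ \iota_g \neq 0$, i.e. $f \circ g \neq 0$ for some $g : G \to A$. That is exactly the content of the second item of Proposition \ref{3.33}, so $G$ is a generator.

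The argument is essentially pure diagram-chasing once the setup is in place, so there is no serious obstacle; the one place demanding care is the bookkeeping around the sum --- checking that $\varepsilon_A$ is well-defined and satisfies $\varepsilon_A \circ \iota_g = g$, and that the vanishing of $f \circ \varepsilon_A$ can be tested componentwise. All of this is just the universal property of $\Sigma_{Hom(G,A)}G$ as a coproduct, so it should cost only a sentence or two.
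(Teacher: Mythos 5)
Your proposal is correct and follows essentially the same strategy as the paper: both directions play the canonical map $\Sigma_{Hom(G,A)}G \rightarrow A$ off against the characterizations of a generator in Proposition \ref{3.33}. The only cosmetic difference is that in the forward direction you phrase failure of epimorphy via a proper image and invoke the third item of Proposition \ref{3.33}, whereas the paper phrases it via a nonzero cokernel and invokes the second item; these are interchangeable by the stated fact that a map is epic iff its image is the whole codomain iff its cokernel vanishes.
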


\begin{proof} Let $G$ be a generator. Suppose for a contradiction there is some $A \in \mathcal{A}$ with $$A \rightarrow B := Cok(\Sigma_{Hom(G,A)}G \rightarrow A) \neq 0.$$
Then there is a map $G \rightarrow A$ with $G \rightarrow A \rightarrow B \neq 0$, but this contradicts that $$\Sigma_{Hom(G,A)}G \rightarrow A \rightarrow B = 0.$$
\\

\noindent Conversely, suppose $\Sigma_{Hom(G,A)}G \rightarrow A$ is epic, so its cokernel is zero. Suppose for a contradiction there is some $A \rightarrow B \neq 0$ such that every $G \rightarrow A$ has $G \rightarrow A \rightarrow B =0$. Then we have $\Sigma_{Hom(G,A)}G \rightarrow A \rightarrow B = 0$, so $A \rightarrow B$ factors through $Cok(\Sigma_{Hom(G,A)}G \rightarrow A)=0$. Then $A \rightarrow B =0$.
\end{proof}

\noindent The dual notions are as follows:
\\ An object $Q$ is \textbf{injective} if the functor $Hom(-,Q): \mathcal{A}^{op} \rightarrow Ab$ is exact.
\\ An object $C$ is a \textbf{cogenerator} if the functor $Hom(-,C): \mathcal{A}^{op} \rightarrow Ab$ is an embedding.
\\

\noindent Note that $Q$ is injective in $\mathcal{A}$ iff it is projective in $\mathcal{A}^{op}$, and $C$ is a cogenerator for $\mathcal{A}$ iff it is a generator for $\mathcal{A}^{op}$.

\begin{prop} \label{3.37}
Let $\mathcal{A}$ be a complete abelian category with a generator.
\\ There is, out of every object in $\mathcal{A}$, a monic to an injective object iff $\mathcal{A}$ has an injective cogenerator.
\end{prop}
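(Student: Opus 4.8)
The plan is to prove the two implications separately; the reverse direction is a quick application of earlier results, while the forward direction needs a construction. For $(\Leftarrow)$, suppose $C$ is an injective cogenerator and let $A \in \mathcal{A}$ be arbitrary. By completeness the product $\prod_{Hom(A,C)} C$ exists, and it is injective by the dual of Proposition \ref{3.32} (a product of injectives is injective whenever the product exists). The canonical map $A \to \prod_{Hom(A,C)} C$, whose component at $f \colon A \to C$ is $f$ itself, is monic by the dual of Proposition \ref{3.36}, precisely because $C$ is a cogenerator. Hence every object admits a monic into an injective object.

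For $(\Rightarrow)$, assume every object admits a monic into an injective object, and let $G$ be the given generator. I would first invoke Proposition \ref{3.35}: since $\mathcal{A}$ has a generator it is well-powered, so the subobjects of $G$ form a set, and hence so do the quotient objects of $G$, via the bijection of Theorem \ref{2.11}. Fix epics $\{G \xrightarrow{q_j} T_j\}_{j \in J}$ representing all quotient objects of $G$. By hypothesis each $T_j$ embeds by a monic $u_j \colon T_j \to Q_j$ into an injective $Q_j$; set $Q := \prod_{j \in J} Q_j$, which exists (completeness) and is injective (dual of Proposition \ref{3.32}). The claim is that $Q$ is a cogenerator, which together with its injectivity finishes the proof.

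To prove the claim I would use the dual of Proposition \ref{3.33}: it suffices to show that for every nonzero $g \colon A \to B$ there is a map $B \to Q$ whose composite with $g$ is nonzero. Since $G$ is a generator, Proposition \ref{3.33} produces $a \colon G \to A$ with $ga \neq 0$, and any $B \to Q$ nonzero on $ga$ is nonzero on $g$; so it is enough to treat nonzero maps $g \colon G \to B$. Factor such a $g$ as $G \xrightarrow{p} I \xrightarrow{i} B$ with $p$ epic onto $I = Im(g)$ and $i$ monic. Then $p$ represents a quotient object of $G$, so there is an isomorphism $\phi \colon I \xrightarrow{\sim} T_j$ with $\phi p = q_j$ for some $j$ (equivalent epics out of $G$ differ by an isomorphism intertwining them). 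Now $u_j \phi \colon I \to Q_j$ is a monic and $i \colon I \to B$ is a monic, so injectivity of $Q_j$ gives $h \colon B \to Q_j$ with $hi = u_j \phi$; taking $h$ in the $j$-th coordinate and $0$ elsewhere yields a map $B \to Q$, and $hg = hip = u_j\phi p = u_j q_j$, which is nonzero because $T_j \cong I = Im(g) \neq 0$ forces $u_j$ to be a nonzero monic while $q_j$ is epic.

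The essential idea — and the only step that is not routine diagram-chasing — is to take the index set to be the family of quotient objects of the generator $G$: well-poweredness (Proposition \ref{3.35}) makes this a genuine set, so the "product of injective receptacles" $Q = \prod_j Q_j$ is a legitimate object and not a product over a proper class, while the generator property lets every nonzero map be detected on a map out of $G$, hence on one of the $T_j$. Once that setup is fixed, the reduction to maps out of $G$, the image factorisation, and the epic/monic bookkeeping verifying $u_j q_j \neq 0$ are all straightforward, and the reverse implication is immediate from the duals of Propositions \ref{3.32} and \ref{3.36}. One small point I would be careful about is that "equivalent quotient objects" really yields an isomorphism intertwining the two quotient maps, which is exactly what was recorded just after the definition of quotient object.
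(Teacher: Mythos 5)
Your proof is correct and follows essentially the same route as the paper: the backward direction via the duals of Propositions \ref{3.32} and \ref{3.36}, and the forward direction by indexing over the (set of) quotient objects of the generator $G$ (legitimate by Proposition \ref{3.35} and Theorem \ref{2.11}) and then detecting any nonzero map through an image factorisation of a composite out of $G$. The only cosmetic difference is that you embed each quotient object $T_j$ into its own injective $Q_j$ and take $\prod_j Q_j$, whereas the paper first forms the product of the quotient objects themselves and then applies the hypothesis once to embed that single object into an injective $E$; both set-ups support the identical extension-and-projection argument.
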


\begin{proof}
\begin{itemize}
    \item Let $C$ be an injective cogenerator for $\mathcal{A}$, and $A \in \mathcal{A}$. The obvious map $A \rightarrow \Pi_{Hom(A,C)}C$ is monic, and $\Pi_{Hom(A,C)}C$ is injective. (We are using the duals of Propositions \ref{3.32} and \ref{3.36}.)
    \item Let $G$ be a generator for $\mathcal{A}$.
    \\ By Proposition \ref{3.35}, the class of quotient objects of $G$ is a set. (The class of subobjects of $G$ is a set, but this is in bijection with the class of quotient objects by Theorem {2.11}.)
    \\ Therefore we may define $P = \Pi_{\{ \text{quotient objects $Q$ of }G \} } Q$.
    \\
    
    By assumption we have a monic $P \rightarrow E$ where $E$ is injective. We claim $E$ is a cogenerator.
    \\Let $A \xrightarrow{c} B \neq 0$.
    \\ It is enough, by the dual of Proposition \ref{3.33}, to name some $B \rightarrow E $ such that $A \rightarrow B \rightarrow E \neq 0$.
    \\ Well, since $G$ is a generator, there is $G \xrightarrow{g} A$ with $cg \neq 0$.
    \\ Let $I \xrightarrow{i} B$ be the image of $G \xrightarrow{g} A \xrightarrow{c} B$, so $G \xrightarrow{cg} B = G \xrightarrow{cg} I \xrightarrow{i} B $.
    \\ Let $I \rightarrow P \rightarrow E$ be a monic $m$. (Since $I$ is a quotient object of $G$, it appears as a factor in $P$, so we may just take $I \rightarrow P$ to consist of the identity $I \rightarrow I$ and zero maps from $I$ to any other factor of $P$. This is monic because that identity component $I \rightarrow I$ is monic.)
    \\ Since $E$ is injective and $I \xrightarrow{i} B$ is monic, there is some $B \xrightarrow{b} E$ such that $bi = I \xrightarrow{m} E$.
    \\ We indeed have $A \xrightarrow{c} B \xrightarrow{b} E \neq 0$, since
    $$bcg = bicg = mcg \neq 0.$$
    (The last step is because $cg \neq 0$ and $m$ is monic.)
    \\
\end{itemize}
\end{proof}

\noindent Recall that a \textbf{subcategory} $\mathcal{A'}$ of the category $\mathcal{A}$ is just a subclass of the objects of $\mathcal{A}$, with, for any two objects $A', A$ in this subclass, a subclass of $Hom(A', A)$ closed under composition and identities. $\mathcal{A'}$ is, of course, a category, and there is an obvious inclusion functor $\mathcal{A}' \rightarrow \mathcal{A}$.
\\

\noindent Let $\mathcal{A'}$ be a subcategory of abelian category $\mathcal{A}$. We say $\mathcal{A}'$ is \textbf{exact} if $\mathcal{A'}$ is abelian and the inclusion functor is exact. The inclusion functor is automatically an embedding, so in this situation a diagram in $\mathcal{A}'$ is exact iff it is exact in $\mathcal{A}$ --- this was Corollary \ref{exemb}.
\\

\noindent Let us say that a functor $F: \mathcal{A} \rightarrow \mathcal{B}$ is \textbf{full} if for any $A_1, A_2 \in \mathcal{A}$ we have that the function $Hom(A_1,A_2) \rightarrow Hom(FA_1,FA_2)$ is surjective.
\\
\noindent A subcategory is \textbf{full} if the inclusion functor is full. A full subcategory of $\mathcal{A}$ can be specified simply by naming a subclass of the objects of $\mathcal{A}$.
\\

\noindent We also remark that any functor $F: \mathcal{A} \rightarrow \mathcal{B}$ restricts in the obvious way to a functor $F \ _{\mkern 1mu \vrule height 2ex\mkern2mu \mathcal{A'}}: \mathcal{A'} \rightarrow \mathcal{B}$ on any subcategory $\mathcal{A'}$ of $\mathcal{A}$.
\\ When $F$ is exact, full, or an embedding, then the restriction $F \ _{\mkern 1mu \vrule height 2ex\mkern2mu \mathcal{A'}}$ will respectively be exact, full, or an embedding.
\section{A special case of Freyd-Mitchell}

An abelian category $\mathcal{A}$ is \textbf{fully abelian} if for every full small exact subcategory $\mathcal{A'}$ of $\mathcal{A}$ there is a ring $R$ and a full exact embedding of $\mathcal{A}'$ into $R$-Mod.
\\

\noindent We shall now state a special case of the Freyd-Mitchell embedding theorem, that is easy to prove.

\begin{thm}[Mitchell] \label{Mitch}
A cocomplete abelian category with a projective generator is fully abelian.
\end{thm}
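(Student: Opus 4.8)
\noindent The plan is to realise the embedding as the representable functor attached to a sufficiently large projective generator. Fix a projective generator $P$ of $\mathcal{A}$. Since $\mathcal{A}'$ is small, choose an infinite cardinal $\lambda$ with $\lambda \ge |Hom(P,A)|$ for every object $A$ of $\mathcal{A}'$, and set $U = \bigoplus_{\lambda} P$. Then $U$ is again projective by Proposition \ref{3.32}, and it is a generator: $P$ is a retract of $U$, so from any $g\colon P\to A$ with $(A\to B)\circ g\neq 0$ we obtain $g\circ\pi\colon U\to A$ with $(A\to B)\circ g\circ\pi\neq 0$ (compose with the inclusion to see the composite is nonzero), where $\pi\colon U\to P$ is a retraction of an inclusion; now apply Proposition \ref{3.33}. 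Moreover, for each object $A$ of $\mathcal{A}'$ the canonical map $\bigoplus_{Hom(P,A)}P\to A$ is epic (Proposition \ref{3.36}), and since $|Hom(P,A)|\le\lambda$ it extends, by zero on the remaining summands, to an epic $U\to A$. Let $R = End_{\mathcal{A}}(U)^{op}$ (a genuine ring, $\mathcal{A}$ being locally small) and let $T = Hom(U,-)\colon\mathcal{A}\to R\text{-Mod}$, where $r\in R$ acts on $g\colon U\to A$ by $r\cdot g = g\circ r$; one checks $Hom(U,A)$ is thereby a left $R$-module and $T$ is $R$-linear on morphisms. The claim is that $T|_{\mathcal{A}'}$ is the desired full exact embedding.

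\noindent Two of the three required properties are immediate. Since $U$ is projective, $T$ is exact on $\mathcal{A}$; since $\mathcal{A}'$ is an exact subcategory, a sequence is exact in $\mathcal{A}'$ iff it is exact in $\mathcal{A}$ (Corollary \ref{exemb}), and $\mathcal{A}'$ is abelian, so $T|_{\mathcal{A}'}$ is exact. Since $U$ is a generator, $T$ is faithful on $\mathcal{A}$ (Proposition \ref{3.33}), hence $T|_{\mathcal{A}'}$ is an embedding.

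\noindent Fullness is the heart of the matter. Let $A,B$ be objects of $\mathcal{A}'$ and $\phi\colon Hom(U,A)\to Hom(U,B)$ an $R$-linear map; since $\mathcal{A}'$ is \emph{full} in $\mathcal{A}$ it is enough to find $f\colon A\to B$ in $\mathcal{A}$ with $\phi = Hom(U,f)$. Pick an epic $p\colon U\to A$ as above and set $q = \phi(p)$. First, $q$ annihilates $\ker p$: if $u\colon U\to U$ has $pu = 0$ then $u\cdot p = pu = 0$, so $0 = \phi(u\cdot p) = u\cdot\phi(p) = qu$; hence, writing $k\colon K\to U$ for $\ker p$, for every $v\colon U\to K$ we have $p(kv) = 0$ and so $q(kv) = 0$, whence $qk = 0$ because $U$ is a generator. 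Therefore $q$ factors through $Cok(k)$; but $Cok(k) = Coim(p)$, which is isomorphic to $A$ with quotient map identified with $p$ (here $p$ is epic and $\mathcal{A}$ is balanced), so $q = fp$ for some $f\colon A\to B$. Finally, $Hom(U,f)$ agrees with $\phi$ not merely at $p$ but on all of $Hom(U,A)$: given any $g\colon U\to A$, projectivity of $U$ and surjectivity of $p$ yield a lift $\tilde g\colon U\to U$ with $p\tilde g = g$, i.e. $g = \tilde g\cdot p$, and then $\phi(g) = \tilde g\cdot\phi(p) = q\tilde g = fp\tilde g = fg = Hom(U,f)(g)$.

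\noindent The main obstacle is exactly this fullness argument, where one must use at once that $U$ is projective (to lift an arbitrary $g\colon U\to A$ through $p$), that $U$ is a generator (to pass from the statement that $q$ kills every endomorphism killed by $p$ to the conclusion $qk = 0$), and the $End_{\mathcal{A}}(U)^{op}$-module structure together with $R$-linearity of $\phi$ (to turn $u\cdot p = 0$ into $qu = 0$). The set-theoretic step of enlarging $P$ to $U = \bigoplus_{\lambda} P$ is what lets smallness of $\mathcal{A}'$ earn its keep: it provides one fixed projective generator surjecting onto every object of $\mathcal{A}'$, so that a single ring $R$ serves the entire subcategory.
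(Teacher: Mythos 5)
Your proof is correct and follows essentially the same route as the paper: enlarge the projective generator to one admitting an epic onto every object of the small subcategory, take $R$ to be its endomorphism ring, and use $Hom(U,-)$, with fullness being the only nontrivial point. Your fullness argument is a mild streamlining of the paper's (you work directly with $q=\phi(p)$ in $\mathcal{A}$ and show it kills $\ker p$ via $R$-linearity, rather than lifting in $R$-Mod and invoking $End(R)\cong R^{op}$), and you are slightly more careful than the paper in taking $R=End(U)^{op}$ so that $Hom(U,A)$ is genuinely a left module.
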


\begin{proof}
Let $\mathcal{A}'$ be a small full exact subcategory of a cocomplete category $\mathcal{A}$. Let $P'$ be a projective generator for $\mathcal{A}$. We wish to give a full exact embedding of $\mathcal{A}'$ into $R$-Mod, for some ring $R$.
\\

\noindent First of all, let us slightly modify $P'$.
\\ For each $A \in \mathcal{A}'$ consider the epic $\Sigma_{Hom(P',A)}P' \rightarrow A$ from Proposition \ref{3.36}.
\\ Let $I = \bigcup_{A \in \mathcal{A}'} Hom(P',A)$. Define $P=\Sigma_I P'$.
\\ By Proposition \ref{3.32}, $P$ is still a projective generator, but now we have an additional property: for each $A \in \mathcal{A}'$ there is an epic $P \rightarrow A$.
\\ (For instance, define $P \rightarrow A$ as the epic $\Sigma_{Hom(P',A)}P' \rightarrow A$ on the summands indexed over by $Hom(P',A)$, and as zero on all other summands.)
\\

\noindent Let $R$ be the ring $End(P)$ of endomorphisms on $P$.
\\ We had previously upgraded the functor $Hom(P,-): \mathcal{A} \rightarrow Set$ to a functor $Hom(P,-): \mathcal{A} \rightarrow Ab$, but now let us upgrade it further to a functor $Hom(P,-): \mathcal{A} \rightarrow R$-Mod.
\begin{itemize}
    \item For every $A \in \mathcal{A}$, the abelian group $Hom(P,A)$ has a canonical $R$-module structure:
    \\ given $P \xrightarrow{x} A \in Hom(P,A)$ and $P \xrightarrow{r} P \in R$, define $$r \cdot x = x \circ r \in Hom(P,A).$$
    \item For every map $A \xrightarrow{y} B$ in $\mathcal{A}$, the induced map $Hom(P,A) \xrightarrow{y \circ -} Hom(P,B)$ is $R$-linear:
    $$(y \circ -)(r \cdot x) = y \circ (r \cdot x) = y \circ (x \circ r) = (y \circ x) \circ r = r \cdot(y \circ x) = r \cdot ((y \circ -) x).$$
\end{itemize}
Hence we do get a functor $F=Hom(P,-): \mathcal{A} \rightarrow R$-Mod.
\\ $F$ is an exact embedding since $P$ is a projective generator.
\\ (To be slightly pedantic, the functor $Hom(P,-): \mathcal{A} \rightarrow Ab$ is an exact embedding by definition of $P$ as a projective generator, but $R$-Mod is an exact subcategory of $Ab$ --- the forgetful inclusion functor $R\text{-Mod} \rightarrow Ab$ has left and right adjoints, so it preserves finite limits and colimits, so it is exact.)
\\ The restriction $F \ _{\mkern 1mu \vrule height 2ex\mkern2mu \mathcal{A'}}$ is therefore an exact embedding; it only remains to show it is full.
\\

\noindent Suppose we have $A, B \in \mathcal{A'}$ and a map $FA \xrightarrow{\tilde{y}} FB$ in $R$-Mod. We must exhibit a map $A \xrightarrow{y} B$ in $\mathcal{A'}$ such that $Fy = \tilde{y}$, where $Fy = y \circ -$.
\\ Since $A,B \in \mathcal{A}'$, we have exact sequences $0 \rightarrow K \rightarrow P \rightarrow A \rightarrow 0$ and $P \rightarrow B \rightarrow 0$ in $\mathcal{A}$ coming from the epics $P \rightarrow A$ and $P \rightarrow B$. (Just take $K \rightarrow P = Ker(P \rightarrow A)$.)
\\ Since $FP=R$, taking $F$ gives us the following commutative diagram in $R$-Mod:

\[ \begin{tikzcd}
0 \arrow{r} &FK \arrow{r} &R \arrow{d}{f} \arrow{r} &FA \arrow{d}{\tilde{y}} \arrow{r} &0 \\
& &R  \arrow{r} &FB \arrow{r} &0
\end{tikzcd} 
\]
where $f$ is a lift of $R \rightarrow FA \xrightarrow{\tilde{y}} FB$. ($R$ is projective and $R \rightarrow FB$ is epic.)
\\

\noindent Since $R$ is a ring, we have $End(R) \cong R^{op}$ --- in other words, any endomorphism on $R$ is given by multiplication \textit{on the right} by some $R$-element. Hence, write $f(s)=sr = s \circ r$ for all $s \in R$, where $P \xrightarrow{r} P \in R$.
\\
\noindent Return to $\mathcal{A}$: in the diagram
\[ \begin{tikzcd}
0 \arrow{r} &K \arrow{r} &P \arrow{d}{r} \arrow{r} &A \arrow{r} &0 \\
& &P  \arrow{r} &B \arrow{r} &0
\end{tikzcd} 
\]
we have that $K \rightarrow P \xrightarrow{r} P \rightarrow B = 0$, as $FK \rightarrow R \xrightarrow{f} R \rightarrow FB = 0$ and $F$ is an embedding.
\\ Hence $P \xrightarrow{r} P \rightarrow B$ factors through the cokernel $P \rightarrow A$ --- there is $A \xrightarrow{y} B$ such that

\[ \begin{tikzcd}
P \arrow{r} \arrow{d}{r} & A \arrow{d}{y} \\
P \arrow{r} & B & \text{commutes.}
\end{tikzcd} 
\]
Hence

\[ \begin{tikzcd}
R \arrow{r} \arrow{d}{f} & F A \arrow{d}{Fy} \\
R \arrow{r} & FB  & \text{commutes.}
\end{tikzcd} 
\]
\\

\noindent Therefore $R \rightarrow FA \xrightarrow{Fy} FB = R \xrightarrow{f} R \rightarrow FB = R \rightarrow FA \xrightarrow{\tilde{y}} FB$.
\\ Since $R \rightarrow FA$ is epic, $Fy = \tilde{y}$.
\\
\\
\end{proof}

\noindent The full statement of the \textbf{Freyd-Mitchell embedding theorem} is: \textit{Every abelian category is fully abelian.}
\\ We have just shown that this is true if our category is cocomplete with a projective generator.
\\ Therefore, if we want to show that every abelian category is fully abelian, it is enough to solve the following problem: Given a small abelian category $\mathcal{A}$, find a cocomplete abelian category $\mathcal{L}$ with a projective generator and an exact full embedding $\mathcal{A} \rightarrow \mathcal{L}$. (The composition of two full exact embeddings is again a full exact embedding.)

\section{Functor categories}

Let $\mathcal{A}$ be a small abelian category. Let $[\mathcal{A}, Ab]$ denote the category of additive functors from $\mathcal{A}$ to $Ab$. Its objects are functors, and its maps are natural transformations.
\\
\begin{thm}
$[\mathcal{A}, Ab]$ is an abelian category.
\end{thm}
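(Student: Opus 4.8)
The plan is to verify the axioms A0 through A3$^*$ directly, exploiting the principle that kernels, cokernels, products and coproducts in $[\mathcal{A}, Ab]$ are computed \emph{pointwise}, together with the fact that $Ab$ is itself abelian. For A0, the constant functor sending every object to $0$ and every map to $0$ is plainly additive and is a zero object of $[\mathcal{A}, Ab]$. For A1 and A1$^*$, given additive functors $F, G$, define $(F \oplus G)(A) = F(A) \oplus G(A)$ on objects and $(F\oplus G)(f) = F(f) \oplus G(f)$ on maps; the componentwise projections and inclusions are natural transformations, and one checks using bilinearity of $\oplus$ in $Ab$ that $F \oplus G$ is again additive and is simultaneously a product and a coproduct in $[\mathcal{A}, Ab]$ (the required mediating maps are assembled componentwise and are natural because their components are). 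For A2 and A2$^*$, given a natural transformation $\eta: F \to G$, set $(\mathrm{Ker}\,\eta)(A) = \mathrm{Ker}(\eta_A)$; for $f : A \to B$ the naturality square relating $\eta_A, \eta_B, F(f), G(f)$ commutes, so $F(f)$ carries $\mathrm{Ker}(\eta_A)$ into $\mathrm{Ker}(\eta_B)$, which makes $\mathrm{Ker}\,\eta$ a functor. It is additive because $F$ is, it comes with an evident monic natural transformation into $F$, and its universal property is inherited from the pointwise kernels together with naturality of the mediating maps. Cokernels are handled dually.

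The one step requiring genuine care is the identification of monics and epics in $[\mathcal{A}, Ab]$: I claim $\eta : F \to G$ is monic iff each component $\eta_A$ is monic in $Ab$, and dually for epics. The ``if'' direction is immediate by evaluating at each object. For ``only if'', apply $\eta$ to the pointwise-kernel inclusion $\mathrm{Ker}\,\eta \to F$: the composite is zero, which also equals $\eta \circ 0$, so by monicity of $\eta$ the inclusion $\mathrm{Ker}\,\eta \to F$ is the zero natural transformation; since it is pointwise monic, it follows pointwise in $Ab$ that $(\mathrm{Ker}\,\eta)(A) = 0$ for all $A$, whence $\eta_A$ is monic. Epics are characterised dually via pointwise cokernels.

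With this in hand, A3 follows: given a monic $\eta : F \to G$, form its cokernel $C$ pointwise and take $\mathrm{Ker}(G \to C)$; evaluated at $A$ this is $\mathrm{Ker}\bigl(G(A) \to \mathrm{Cok}(\eta_A)\bigr) = \mathrm{Im}(\eta_A)$, which is isomorphic to $F(A)$ precisely because $\eta_A$ is monic, and this family of isomorphisms is natural by naturality of $\eta$ and of the pointwise factorisations. Hence $\eta$ is the kernel of its own cokernel. Axiom A3$^*$ is dual. I expect the bookkeeping around naturality — checking that each pointwise construction really does assemble into a functor and that each assembled map really is a natural transformation — to be the bulk of the work, but the only conceptually substantive point is the pointwise characterisation of monics and epics, which is what makes A3 and A3$^*$ go through.
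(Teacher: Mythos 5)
Your proposal is correct and follows essentially the same route as the paper: all structure (zero object, biproducts, kernels, cokernels) is built pointwise, monics and epics are characterised componentwise, and A3/A3$^*$ are deduced by showing a monic is the kernel of its pointwise cokernel. If anything, your treatment of the pointwise characterisation of monics (via the vanishing of the pointwise kernel) is spelled out more carefully than the paper's, which merely asserts that the A2 construction ``shows'' it.
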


\begin{proof}
We briefly run through the axioms.
\begin{itemize}
    \item[A0.] The constantly zero functor is a zero object.
    \item[A1, A1*.] (Binary) sums and products are computed pointwise. Given $F_1, F_2 \in [\mathcal{A}, Ab]$, define a functor $F_1 \oplus F_2$ on objects as $( F_1 \oplus F_2 )(A) = F_1(A) \oplus F_2(A)$ and on maps as
    $$( F_1 \oplus F_2 )(x) = \begin{pmatrix}
    F_1(x) & 0  \\
    0 & F_2(x) 
\end{pmatrix}. $$
This plays the role of binary sum and product.

    \item[A2.] Let $F_1 \rightarrow F_2$ in $[\mathcal{A}, Ab]$. We construct a kernel $K \rightarrow F_1$.
    \\ For each $A \in \mathcal{A}$, let $K(A):=Ker(F_1A \rightarrow F_2A)$.
    \\ Given $A \xrightarrow{x} B$ in $\mathcal{A}$ there is a unique map $K(x): K(A) \rightarrow K(B)$ such that
    
\[ \begin{tikzcd}
K(A) \arrow{r} \arrow{d}{K(x)} & F_1(A) \arrow{d}{F_1(x)} \\
K(B) \arrow{r} & F_1(B) & \text{commutes.}
\end{tikzcd} 
\]

The uniqueness forces $K$ to be a functor, and $K \rightarrow F_1$ is a natural transformation. (The diagram above is a naturality square.)
    
    \item[A2*.] Dually to A2, we construct a cokernel $F_2 \rightarrow C$ for each $F_1 \rightarrow F_2$ pointwise.
    \item[A3.] The construction in A2 shows that a natural transformation $F_1 \rightarrow F_2$ is monic in $[\mathcal{A}, Ab]$ iff $F_1A \rightarrow F_2A$ is monic in $\mathcal{A}$ for each $A$. The construction for A2* shows that if $F_1 \rightarrow F_2$ is monic, then it is a kernel of its cokernel.
    \item[A3*.] Dual to A3.
\end{itemize}
\end{proof}

\noindent These constructions indicate that a sequence $F' \rightarrow F \rightarrow F''$ is exact in $[\mathcal{A}, Ab]$ iff the sequences \\ $F'A \rightarrow FA \rightarrow F''A$ are exact in $\mathcal{A}$ for all $A \in \mathcal{A}$.
\\ More formally, the \textbf{evaluation functor} $E_A: [\mathcal{A},Ab] \rightarrow Ab$ defined by $E_A(F_1 \xrightarrow{\eta} F_2) = F_1A \xrightarrow{\eta(A)} F_2A$ is an exact functor for each $A \in \mathcal{A}$.
\\ The product $(\Pi_{\mathcal{A}}E_A): [\mathcal{A}, Ab] \rightarrow Ab$ defined by $(\Pi_{\mathcal{A}}E_A)(F) = \Pi_{\mathcal{A}}E_A(F) = \Pi_{\mathcal{A}}FA$ is an exact embedding.
\\
\begin{prop}
$[\mathcal{A}, Ab]$ is a bicomplete abelian category.
\end{prop}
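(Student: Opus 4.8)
The plan is to show that all set-indexed products and sums in $[\mathcal{A}, Ab]$ exist and are computed pointwise; combined with the Fact noted above that an abelian category automatically has all equalisers and coequalisers, this yields bicompleteness. So by the remark following that Fact, it suffices to produce arbitrary products in $[\mathcal{A}, Ab]$ and, dually, arbitrary sums.

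For products, given an indexed set $\{F_j\}_{j \in J}$ of additive functors, I would define $G \colon \mathcal{A} \to Ab$ on objects by $G(A) = \prod_j F_j(A)$, which exists since $Ab$ is complete, and on a map $A \xrightarrow{x} B$ by letting $G(x)$ be the unique homomorphism with $\pi^B_j \circ G(x) = F_j(x) \circ \pi^A_j$ for all $j$, where $\pi^A_j \colon G(A) \to F_j(A)$ is the projection. Exactly as in the pointwise construction of kernels in the proof that $[\mathcal{A}, Ab]$ is abelian, uniqueness in the universal property of the product forces $G$ to respect identities and composition, so $G$ is a functor and the maps $\pi_j \colon G \to F_j$ are natural transformations. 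Additivity of $G$ follows from additivity of each $F_j$ together with the fact that addition of homomorphisms of abelian groups is computed componentwise on a product: $G(x+y)$ and $G(x) + G(y)$ have the same composite with every $\pi^B_j$, hence coincide. So $G \in [\mathcal{A}, Ab]$.

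It then remains to check $G = \prod_j F_j$ in $[\mathcal{A}, Ab]$. Given $H \in [\mathcal{A}, Ab]$ and natural transformations $\eta_j \colon H \to F_j$, define $\eta \colon H \to G$ by letting $\eta(A) \colon H(A) \to G(A)$ be the unique homomorphism with $\pi^A_j \circ \eta(A) = \eta_j(A)$ for all $j$. Naturality of $\eta$ is again forced by the uniqueness clause of the universal property — both $G(x) \circ \eta(A)$ and $\eta(B) \circ H(x)$ have the same composites with each $\pi^B_j$, using naturality of the $\eta_j$ — and $\eta$ is plainly the unique natural transformation with $\pi_j \circ \eta = \eta_j$. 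The construction of arbitrary sums $\bigoplus_j F_j$ is entirely dual, using that $Ab$ is cocomplete, together with the dual observation that the functorial action and additivity of the pointwise direct sum are pinned down by precomposition with the coproduct inclusions.

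I do not expect a genuine obstacle here: everything reduces to transporting the completeness and cocompleteness of $Ab$ pointwise, with functoriality and naturality handed over for free by the uniqueness halves of the relevant universal properties — the same mechanism already exploited in showing $[\mathcal{A}, Ab]$ is abelian. The only point demanding a moment's care is verifying that a pointwise product (resp. sum) of additive functors is itself additive, so that the constructed object really lies in $[\mathcal{A}, Ab]$; this is immediate once one recalls that $+$ on hom-groups of $Ab$ is computed coordinatewise with respect to products and (via the inclusions) with respect to coproducts.
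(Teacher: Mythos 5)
Your proof is correct and takes essentially the same approach as the paper: both construct arbitrary products and sums pointwise, transporting the (co)completeness of $Ab$ and relying on the earlier Fact that an abelian category already has all equalisers and coequalisers. The paper states this in a single line, whereas you carry out the routine verifications of functoriality, naturality, additivity, and the universal property, all of which are sound.
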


\begin{proof}
Let $\{F_i\}_I$ be a (small) collection of functors in $[\mathcal{A}, Ab]$.
\\ We construct $\Pi_IF_i$ and $\Sigma_IF_i$ pointwise, just as we did finite direct sums:
$$(\Pi_IF_i)(A)=\Pi_IF_iA \qquad \text{and} \qquad (\Sigma_IF_i)(A)=\Sigma_IF_iA.$$
\end{proof}

\noindent The next definition generalises a property that is possessed by categories like $Ab$ and $R$-Mod, where $R$ is a ring.
\begin{defn}
Let $\mathcal{A}$ be a bicomplete well-powered abelian category.
\\ We say $\mathcal{A}$ is a \textbf{Grothendieck} category if for each chain $\{S_i\}_I$ in the lattice of subobjects of an object $S$, and $T$ is any subobject of $S$, then we have $$T \cap \bigcup S_i = \bigcup (T \cap S_i).$$
\end{defn}

\noindent That $R$-Mod satisfies this property really is quite trivial, because the union and the intersection are just set-theoretic union and intersection. It was important, then, that we demanded the family of subobjects to be a chain --- this guarantees that the set-theoretic union is again a module.
\\
\begin{prop}
$[\mathcal{A}, Ab]$ is a Grothendieck category.
\end{prop}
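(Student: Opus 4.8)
The plan is to verify the Grothendieck property for $[\mathcal{A}, Ab]$ by reducing everything to the corresponding (and essentially trivial) statement in $Ab$, using the fact established above that kernels, cokernels, sums, and unions/intersections of subobjects are all computed pointwise. First I would record the key translation: a subobject of $F \in [\mathcal{A}, Ab]$ is represented by a monic $G \rightarrow F$, and by the analysis of A2/A3 this is the same data as a choice of subgroup $G(A) \subseteq F(A)$ for each $A$, closed under the transition maps $F(x)$ in the obvious sense. So the lattice of subobjects of $F$ embeds (as a poset) into $\prod_{A} \{\text{subgroups of } F(A)\}$, and this embedding is compatible with meets and joins because both $\bigcap$ and $\bigcup$ in $[\mathcal{A},Ab]$ are computed pointwise — the meet is the pointwise intersection of subgroups, and the join, being $\bigcup$, can be obtained as $\mathrm{Im}(\Sigma G_i \rightarrow F)$, whose pointwise value is $\mathrm{Im}(\Sigma G_i(A) \rightarrow F(A)) = \sum_i G_i(A)$.

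With that dictionary in hand, the argument is short. I would take a chain $\{S_i\}_I$ of subobjects of some $F$ and a further subobject $T \subseteq F$, and evaluate both sides of the asserted identity $T \cap \bigcup S_i = \bigcup(T \cap S_i)$ at an arbitrary object $A \in \mathcal{A}$. The left side becomes $T(A) \cap \bigcup_i S_i(A) = T(A) \cap \sum_i S_i(A)$ (using that, since $\{S_i\}$ is a chain of subobjects, $\{S_i(A)\}$ is a chain of subgroups of $F(A)$, so its join is the union $\bigcup_i S_i(A)$, which here coincides with the subgroup it generates). The right side becomes $\sum_i \bigl(T(A) \cap S_i(A)\bigr) = \bigcup_i\bigl(T(A)\cap S_i(A)\bigr)$. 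These two subgroups of $F(A)$ are equal because $Ab$ is a Grothendieck category — concretely, this is the elementary modular-law-for-chains fact that for a chain of subgroups, intersecting with a fixed subgroup distributes over the union. Since this holds for every $A$, and subobjects are determined by their pointwise values, the two subobjects of $F$ agree. I should also note at the outset that $[\mathcal{A},Ab]$ is bicomplete (already proven) and well-powered (its subobjects of $F$ inject into $\prod_A \{\text{subgroups of }F(A)\}$, a set since $\mathcal{A}$ is small), so the hypotheses in the definition of a Grothendieck category are met.

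The main obstacle — really the only point requiring care — is justifying that $\bigcup$ in the functor category is computed pointwise, i.e. that $(\bigcup_i S_i)(A) = \sum_i S_i(A)$ (or $\bigcup_i S_i(A)$ in the chain case). This is not quite as immediate as the pointwise computation of finite $\bigcap$, because $\bigcup$ was defined via $Ker$–$Cok$ duality rather than directly; I would handle it by invoking that $Ker$, $Cok$, sums, and images are all pointwise in $[\mathcal{A},Ab]$ (from the abelianness proof and the earlier Fact that $\mathrm{Im} = KerCok$), so the join $\bigcup_i S_i = \mathrm{Im}\bigl(\Sigma_i S_i \rightarrow F\bigr)$ has pointwise value $\mathrm{Im}\bigl(\Sigma_i S_i(A)\rightarrow F(A)\bigr) = \sum_i S_i(A)$. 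Once that lemma is in place the verification is purely formal, inheriting the Grothendieck identity from $Ab$ object-by-object.
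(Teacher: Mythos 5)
Your proof is correct and follows essentially the same route as the paper: subobjects, intersections, and unions in $[\mathcal{A},Ab]$ are all computed pointwise, so the Grothendieck identity is inherited object-by-object from $Ab$. The only difference is that you justify the pointwise computation of $\bigcup$ more carefully (via $\bigcup_i S_i = \mathrm{Im}(\Sigma_i S_i \rightarrow F)$ and the pointwise nature of sums and images), where the paper asserts it more briefly; this is a welcome addition rather than a divergence.
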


\begin{proof}
$[\mathcal{A}, Ab]$ is certainly well-powered (Proposition \ref{3.35} and Theorem \ref{5.35}, for instance).
\\ Note that given a collection $\{F_i\}_I$ of subfunctors of $F$, their union and intersection are constructed pointwise:
$$(\bigcup F_i)(A) = \bigcup (F_iA) \subset FA,$$
since we know that $F_i \rightarrow F$ is monic only if each component is monic.
\\ Hence, given a chain $\{F_i\}$ and subfunctor $H \subset F$, we have
$$(H \cap \bigcup F_i)(A) = HA \cap \bigcup F_iA = \bigcup (HA \cap F_iA) = (\bigcup (H \cap F_i))(A),$$
where the second equality uses that $Ab$ is Grothendieck.
\\
\end{proof}

\noindent Recall that the \textbf{(co)-Yoneda embedding} is the functor $H: \mathcal{A}^{op} \rightarrow [\mathcal{A}, Ab]$ given on objects by \\ $H(A) = Hom(A, -)$, and on maps by $H(A \xrightarrow{x} B) = Hom(B, -) \xrightarrow{ (x , -)} Hom(A,-)$.
\\ If we denote $Hom(A,-)$ by $H^A$, then we may as well denote $H(A \xrightarrow{x} B)$ by $H^B \xrightarrow{H^x} H^A$.
\\

\begin{thm}
The Yoneda embedding $H: \mathcal{A}^{op} \rightarrow [\mathcal{A}, Ab]$ is left-exact.
\end{thm}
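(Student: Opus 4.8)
The plan is to unwind what left-exactness of $H\colon \mathcal{A}^{op}\to[\mathcal{A},Ab]$ actually asserts, and then reduce it to a classical fact about representable functors. A left-exact sequence in $\mathcal{A}^{op}$ is, by definition, a sequence $0\to A''\to A\to A'$ in $\mathcal{A}^{op}$, which is the same data as a right-exact sequence $A'\xrightarrow{f} A\xrightarrow{g} A''\to 0$ in $\mathcal{A}$. Applying $H$ turns this into $0\to H^{A''}\xrightarrow{H^g} H^A\xrightarrow{H^f} H^{A'}$ in $[\mathcal{A},Ab]$, and we must show this latter sequence is left-exact. Since exactness in $[\mathcal{A},Ab]$ is checked pointwise (this was established right after the theorem that $[\mathcal{A},Ab]$ is abelian — the evaluation functors $E_B$ are exact), it suffices to fix an arbitrary $B\in\mathcal{A}$ and show that
\[
0\longrightarrow \mathrm{Hom}(A'',B)\xrightarrow{\;-\circ g\;} \mathrm{Hom}(A,B)\xrightarrow{\;-\circ f\;}\mathrm{Hom}(A',B)
\]
is a left-exact sequence of abelian groups.

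So the real content is: \emph{the contravariant representable $\mathrm{Hom}(-,B)$ carries right-exact sequences to left-exact sequences.} I would prove the two required facts directly. First, $-\circ g$ is monic: if $h\colon A''\to B$ satisfies $h\circ g=0$, then since $g$ is epic (it is the cokernel of $f$, hence epic by the fact that $A\to\mathrm{Im}$ is epic / cokernels are epic), we get $h=0$. Second, exactness at the middle: $\ker(-\circ f)=\mathrm{im}(-\circ g)$. The inclusion $\mathrm{im}(-\circ g)\subseteq\ker(-\circ f)$ is immediate from $g\circ f=0$ (which holds because in $A'\xrightarrow{f}A\xrightarrow{g}A''\to 0$ right-exactness forces $\mathrm{im}(f)=\ker(g)$, so $gf=0$). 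For the reverse inclusion, suppose $h\colon A\to B$ satisfies $h\circ f=0$. Then $h$ kills $\mathrm{im}(f)=\ker(g)$, so $h$ factors through $\mathrm{coim}(g)=A/\ker(g)$; but $g$ being epic means $A''=\mathrm{coim}(g)$ (again invoking the "Fact" that $g$ is monic onto its coimage and epic, hence $\mathrm{coim}(g)\cong A''$ compatibly), so $h$ factors as $h=\bar h\circ g$ for some $\bar h\colon A''\to B$, i.e. $h\in\mathrm{im}(-\circ g)$. This is exactly the universal property of the cokernel $g=\mathrm{Cok}(f)$: maps out of $A$ killing $f$ correspond to maps out of $A''$.

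The cleanest way to organize this is to observe that right-exactness of $A'\xrightarrow{f}A\xrightarrow{g}A''\to 0$ says precisely that $g$ is a cokernel of $f$ (using Axiom A3* together with $\mathrm{im}(f)=\ker(g)$, or directly from the definition of cokernel via Lemma~\ref{2.21}), and then the left-exactness of $0\to\mathrm{Hom}(A'',B)\to\mathrm{Hom}(A,B)\to\mathrm{Hom}(A',B)$ is nothing but the universal property of cokernels phrased for the hom-sets, plus the observation that a cokernel map is epic. I would phrase the proof as: (i) reduce to pointwise exactness via the evaluation functors; (ii) identify the right-exact sequence in $\mathcal{A}$ with a cokernel diagram; (iii) apply the universal property of the cokernel. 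The main obstacle — really the only place requiring care — is step (ii): being cleanly precise that "right-exact sequence $A'\to A\to A''\to 0$" is equivalent to "$g\colon A\to A''$ is a cokernel of $f$", and that such a $g$ is epic. Both follow from the Facts and Lemma~\ref{2.21} already recorded, so no genuinely new argument is needed; the proof is short once the definitions are unwound correctly.
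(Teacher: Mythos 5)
Your proof is correct and follows essentially the same route as the paper: reduce to pointwise exactness via the evaluation functors, then invoke the left-exactness of the contravariant representable $\mathrm{Hom}(-,B)$. The only difference is that you additionally prove that classical fact from the universal property of the cokernel, whereas the paper simply cites it; your version also gets the variance bookkeeping (right-exact in $\mathcal{A}$ versus left-exact in $\mathcal{A}^{op}$) stated more carefully than the paper's own two-line argument.
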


\begin{proof}
Let $0 \rightarrow A'\rightarrow A \rightarrow A''$ be exact in $\mathcal{A}$. We show $H^{A''} \rightarrow H^A \rightarrow H^{A'} \rightarrow H^0$ is exact in $[\mathcal{A}, Ab]$.
\\ We know this holds iff $Hom(0,B) \rightarrow Hom(A',B) \rightarrow Hom(A,B) \rightarrow Hom(A'',B)$ is exact in $Ab$ for each $B \in \mathcal{A}$, but this holds because $Hom(-,B): \mathcal{A}^{op} \rightarrow Ab$ is left exact.
\\
\end{proof}

\noindent We recall the following famous lemma from category theory.

\begin{lem}[Yoneda Lemma]
$Hom(H^A,F)$ is naturally isomorphic to $F(A)$ in $A \in \mathcal{A}^{op}$ and $F \in [\mathcal{A}, Ab]$.
\\
\end{lem}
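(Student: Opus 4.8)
The plan is to write the isomorphism down by hand and then check three things: that it is a bijection, that it is additive, and that it is natural in both arguments. Define $\Phi_{A,F}\colon Hom(H^A,F)\to F(A)$ by $\Phi_{A,F}(\eta)=\eta_A(1_A)$; that is, evaluate the natural transformation $\eta$ at the object $A$ and feed it the identity $1_A\in H^A(A)=Hom(A,A)$. For the candidate inverse $\Psi_{A,F}\colon F(A)\to Hom(H^A,F)$, send $a\in F(A)$ to the transformation $\widehat a$ whose component at $B\in\mathcal{A}$ is $\widehat a_B\colon Hom(A,B)\to F(B)$, $f\mapsto F(f)(a)$.

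First I would check that $\widehat a$ is a legitimate morphism of $[\mathcal{A},Ab]$: each $\widehat a_B$ is a group homomorphism because $F$ is additive (so $F(f+f')=F(f)+F(f')$), and the naturality square for $\widehat a$ along $g\colon B\to B'$ commutes because $F(g)F(f)=F(gf)$ is just functoriality of $F$. Then $\Phi\circ\Psi=\mathrm{id}$ is immediate: $\Phi_{A,F}(\widehat a)=\widehat a_A(1_A)=F(1_A)(a)=a$. For $\Psi\circ\Phi=\mathrm{id}$ one must show $\widehat{\eta_A(1_A)}=\eta$; evaluating the left side at $f\colon A\to B$ gives $F(f)(\eta_A(1_A))$, and the naturality square of $\eta$ along $f$ (left vertical $H^A(f)=f\circ-$, right vertical $F(f)$) rewrites this as $\eta_B(f\circ 1_A)=\eta_B(f)$. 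Finally $\Phi_{A,F}$ is additive since addition of natural transformations in $[\mathcal{A},Ab]$ is computed pointwise, so $\Phi_{A,F}$ is in fact an isomorphism of abelian groups.

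It then remains to verify naturality. Naturality in $F$ is painless: for $\theta\colon F\to F'$ we have $\Phi_{A,F'}(\theta\circ\eta)=(\theta\circ\eta)_A(1_A)=\theta_A(\eta_A(1_A))=\theta_A(\Phi_{A,F}(\eta))$. Naturality in $A$ needs a little care with variance: a map $x\colon A\to B$ in $\mathcal{A}$ induces $H^x\colon H^B\to H^A$, hence $(-\circ H^x)\colon Hom(H^A,F)\to Hom(H^B,F)$, and one computes $\Phi_{B,F}(\eta\circ H^x)=\eta_B\bigl(H^x_B(1_B)\bigr)=\eta_B(1_B\circ x)=\eta_B(x)$, while on the other route $F(x)\bigl(\Phi_{A,F}(\eta)\bigr)=F(x)(\eta_A(1_A))=\eta_B(x\circ 1_A)=\eta_B(x)$, again by naturality of $\eta$ along $x$; the two agree.

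I do not expect a genuine obstacle here — the content is the standard Yoneda argument, with the only moving part being the bookkeeping of variance (the composite $A\mapsto Hom(H^A,F)$ is covariant in $A\in\mathcal{A}$ because $H$ reverses arrows and $Hom(-,F)$ reverses them again, which is exactly why it matches $F$ itself) and the observation that everything is happening with additive functors and $Ab$-valued homs, so that "natural isomorphism" is to be read as a natural isomorphism of $Ab$-valued bifunctors on $\mathcal{A}^{op}\times[\mathcal{A},Ab]$.
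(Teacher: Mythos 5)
Your proof is correct and complete: it is the standard Yoneda argument (evaluate at $1_A$, with inverse $a\mapsto(f\mapsto F(f)(a))$), and you rightly add the two points that matter in this additive setting, namely that $\widehat a$ has group-homomorphism components because $F$ is additive and that the bijection is itself additive. The paper states the lemma without proof, so there is nothing to diverge from; your write-up, including the variance check for naturality in $A$, is exactly the argument the paper is implicitly relying on.
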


\begin{thm} \label{5.35}
$\Sigma_\mathcal{A} H^A$ is a projective generator for $[\mathcal{A}, Ab]$.
\end{thm}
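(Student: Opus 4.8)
The plan is to prove both halves at once by identifying the representable functor $Hom(\Sigma_\mathcal{A} H^A, -)$ explicitly. Since $\mathcal{A}$ is small, the indexing class is a set, so the sum $\Sigma_\mathcal{A} H^A$ genuinely exists in the bicomplete category $[\mathcal{A}, Ab]$; it lies in $[\mathcal{A}, Ab]$ because each $H^A$ is additive and sums are computed pointwise.

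First I would record, for a fixed object $A$, that the Yoneda lemma supplies a natural isomorphism $Hom(H^A, F) \cong F(A)$ in the variable $F$; in other words $Hom(H^A, -) \cong E_A$ as functors $[\mathcal{A}, Ab] \rightarrow Ab$. Next, since a map out of a direct sum is the same datum as a family of maps out of the summands, $Hom(\Sigma_\mathcal{A} H^A, F) \cong \Pi_\mathcal{A} Hom(H^A, F) \cong \Pi_\mathcal{A} F(A)$ naturally in $F$; that is, $Hom(\Sigma_\mathcal{A} H^A, -) \cong (\Pi_\mathcal{A} E_A)$ as functors $[\mathcal{A}, Ab] \rightarrow Ab$. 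One should check this isomorphism is natural, i.e. compatible with the $E_A$-components of a given natural transformation of functors, but this is routine from the explicit form of the Yoneda isomorphism.

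Now I would invoke the fact recorded just after the definition of the evaluation functors: $(\Pi_\mathcal{A} E_A)$ is an exact embedding. Transporting ``exact'' and ``embedding'' across the natural isomorphism of the previous step, $Hom(\Sigma_\mathcal{A} H^A, -)$ is exact and is an embedding. By definition, the first says $\Sigma_\mathcal{A} H^A$ is projective and the second says it is a generator, which finishes the proof.

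Alternatively, one can argue more by hand: projectivity follows from Proposition \ref{3.32} once one knows each $H^A$ is projective, and $H^A$ is projective precisely because $Hom(H^A, -) \cong E_A$ is exact; the generator property then follows from Proposition \ref{3.33}, since a nonzero natural transformation $\eta: F \rightarrow F'$ has $\eta(A) \neq 0$ for some $A$, hence some $x \in F(A)$ with $\eta(A)(x) \neq 0$, and the corresponding Yoneda map $H^A \rightarrow F$ (extended by zero on the other summands to a map out of $\Sigma_\mathcal{A} H^A$) composes nontrivially with $\eta$. I do not anticipate a genuine obstacle here; the only points needing a little care are the naturality of the Yoneda isomorphism in the functor variable and the harmless observation that a sum indexed by the (small) class of objects of $\mathcal{A}$ is legitimate.
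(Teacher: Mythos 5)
Your proposal is correct and is essentially the paper's own argument: both identify $Hom(\Sigma_\mathcal{A} H^A, -) \cong \Pi_\mathcal{A} E_A$ via the Yoneda Lemma and the universal property of the sum, then invoke the fact that $\Pi_\mathcal{A} E_A$ is an exact embedding. The extra by-hand alternative and the remarks on smallness and naturality are fine but not needed beyond what the paper already does.
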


\begin{proof}
Let us be more specific about what the Yoneda Lemma says.
\\ There are functors $D, E : \mathcal{A}^{op} \times [\mathcal{A}, Ab] \rightarrow Ab$ defined by
$$D = \mathcal{A}^{op} \times [\mathcal{A}, Ab] \xrightarrow{H \times 1} [\mathcal{A}, Ab] \times [\mathcal{A}, Ab] \xrightarrow{Hom_{[\mathcal{A}, Ab]}} Ab,$$
(so $D(A,F) = Hom (H^A, F)$,)
\\ and the \textit{evaluating functor} $$E(A,F) = F(A); \quad E(A,F_1 \xrightarrow{\eta} F_2) = F_1(A) \xrightarrow{\eta_A} F_2(A), \quad E(A_1 \xrightarrow{x} A_2, F) = F(A_1) \xrightarrow{F(x)} F(A_2).$$
The Yoneda Lemma says $D$ is naturally isomorphic to $E$.
\\ Hence, as functors $[\mathcal{A}, Ab] \rightarrow Ab$, we have that
$Hom(\Sigma_\mathcal{A} H^A, - )$ is naturally isomorphic to $(\Pi E_A)$:
$$Hom(\Sigma_\mathcal{A} H^A, - ) = \Pi_\mathcal{A} Hom( H^A, - ) =\Pi_\mathcal{A} D( A, -) \cong \Pi_\mathcal{A} E( A, -)  = (\Pi_\mathcal{A} E_A), $$
The latter is an exact embedding.
\end{proof}

\begin{thm} \label{5.36}
The Yoneda embedding $H: \mathcal{A}^{op} \rightarrow [\mathcal{A}, Ab]$ is a full embedding.
\end{thm}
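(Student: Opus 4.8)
The plan is to obtain both fullness and faithfulness of $H$ as an immediate consequence of the Yoneda Lemma, by specializing the functor variable to another representable. Recall that in the paper's terminology a \emph{full embedding} means the map
\[
\mathrm{Hom}_{\mathcal{A}^{op}}(A,B) \longrightarrow \mathrm{Hom}_{[\mathcal{A},Ab]}(H^A, H^B)
\]
induced by $H$ is a bijection for all $A,B$. Here a morphism $A \to B$ in $\mathcal{A}^{op}$ is the same thing as a morphism $g \colon B \to A$ in $\mathcal{A}$, and by the definition of the Yoneda embedding it is sent to $H^g = (g,-) \colon H^A \to H^B$, i.e.\ precomposition by $g$, whose component at $C$ sends $f \colon A \to C$ to $f \circ g \colon B \to C$.

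First I would apply the Yoneda Lemma with the second argument taken to be $F := H^B$; equivalently, evaluate the natural isomorphism $D \cong E$ from the proof of Theorem \ref{5.35} at the pair $(A, H^B)$. This yields a bijection
\[
\Phi \colon \mathrm{Hom}_{[\mathcal{A},Ab]}(H^A, H^B) \xrightarrow{\ \sim\ } H^B(A) = \mathrm{Hom}_{\mathcal{A}}(B,A),
\qquad \eta \longmapsto \eta_A(1_A).
\]
The key step is then the one-line computation that $\Phi$ inverts the map induced by $H$: for $g \colon B \to A$ we have $\Phi(H^g) = (H^g)_A(1_A) = 1_A \circ g = g$, using that $(H^g)_A$ is precomposition by $g$. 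Hence $g \mapsto H^g$ is a two-sided inverse of the bijection $\Phi$ (on the relevant hom-sets) and is therefore itself a bijection. Injectivity gives faithfulness; for surjectivity (fullness), given any natural transformation $\eta \colon H^A \to H^B$ put $g := \Phi(\eta) \in \mathrm{Hom}_{\mathcal{A}}(B,A)$, and then $H^g$ and $\eta$ have the same image $g$ under the bijection $\Phi$, so $\eta = H^g$.

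I do not expect any real obstacle here; this is the standard corollary of Yoneda. The only point demanding care is bookkeeping: keeping the variance straight (morphisms in $\mathcal{A}^{op}$ versus $\mathcal{A}$, and the contravariant behaviour of $A \mapsto H^A$), and verifying that the Yoneda isomorphism, \emph{composed with $H$}, is literally the identity on $\mathrm{Hom}_{\mathcal{A}}(B,A)$ rather than merely some isomorphism — which is exactly what the computation $\Phi(H^g)=g$ checks. Note also that naturality of the Yoneda isomorphism in $F$ is not needed for this argument: the bijection for the single functor $F = H^B$ already suffices.
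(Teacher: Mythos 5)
Your proof is correct and follows the same route as the paper: specialize the Yoneda Lemma to $F = H^B$ to identify $\mathrm{Hom}_{[\mathcal{A},Ab]}(H^A,H^B)$ with $\mathrm{Hom}_{\mathcal{A}}(B,A)$. You are in fact slightly more careful than the paper, which only records the abstract bijection of hom-sets; your computation $\Phi(H^g) = (H^g)_A(1_A) = g$ is the extra check needed to conclude that the map induced by $H$ itself (and not merely some isomorphism) is the bijection.
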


\begin{proof}
This follows immediately from setting $F = H^B$ in the Yoneda Lemma: $$Hom_{[\mathcal{A},Ab]}(H^A,H^B) \cong H^B(A)=Hom_\mathcal{A}(B,A)=Hom_{\mathcal{A}^{op}}(A,B).$$
\end{proof}
\section{Injective Envelopes}

The key result of this section will be the following: In a Grothendieck category that has a generator, every object has an injective envelope.
\\

\noindent In particular this applies to $[\mathcal{A}, Ab]$, and will be very useful in the next section.
\\

\noindent Throughout let $\mathcal{A}$ be an abelian category. Given an object $A \in \mathcal{A}$, an \textbf{extension} of $A$ is simply a monic $A \rightarrow B$ out of $A$. Sometimes we will call $B$ itself an extension of $A$.
\\ A \textbf{trivial} extension of $A$ is a \textbf{split} monic --- a monic $A \xrightarrow{x} B$ for which there is some $B \xrightarrow{y} A$ with $yx=1_A$. Equivalently, $a \rightarrow B$ is a trivial extension if there is an object $C$ with $B = A \oplus C$, and $A \rightarrow B$ is the inclusion $A \xrightarrow{i_1} A \oplus C$. ($C$ must then be the cokernel of $A \rightarrow B$.)

\begin{prop}
An object in $\mathcal{A}$ is injective iff it has only trivial extensions.
\end{prop}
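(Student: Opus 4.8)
The plan is to handle the two implications separately. The forward direction is immediate from the definition of injectivity, while the reverse direction rests on the fact that in an abelian category the pushout of a monic along any map is again a monic.

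For the forward implication, suppose $Q$ is injective and let $Q \xrightarrow{x} B$ be an extension. Since $Hom(-,Q)$ is exact, applying it to the monic $x$ shows that $Hom(B,Q) \xrightarrow{x^{*}} Hom(Q,Q)$ is epic; equivalently, dualising the lifting characterisation of projectives, every map to $Q$ from the domain of a monic extends along that monic. In particular $1_Q$ extends: there is $B \xrightarrow{y} Q$ with $yx = 1_Q$, so $x$ is split and the extension is trivial.

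For the reverse implication, suppose every extension of $Q$ is trivial, and let $A \xrightarrow{f} B$ be monic and $A \xrightarrow{g} Q$ an arbitrary map; I must extend $g$ along $f$. Form the pushout of $f$ and $g$:
\[
\begin{tikzcd}[column sep=2.5em]
A \arrow{r}{f} \arrow[swap]{d}{g} & B \arrow{d}{g'} \\
Q \arrow[swap]{r}{f'} & P
\end{tikzcd}
\]
which exists since abelian categories have pushouts. Concretely $P$ is the cokernel of $\langle f, -g \rangle : A \to B \oplus Q$, with $g'$ and $f'$ the two components of the cokernel map. The crucial point is that $f'$ is monic: the map $\langle f, -g \rangle$ is monic since its composite with the projection $B \oplus Q \to B$ is the monic $f$, so $0 \to A \to B \oplus Q \to P \to 0$ is short exact; then if $f'u = 0$, the composite $Q \xrightarrow{\iota_2} B \oplus Q \to P$ kills $u$, so $\iota_2 u$ factors through $\langle f, -g \rangle$, whence (projecting to $B$ and using that $f$ is monic, then that $\iota_2$ is monic) $u = 0$. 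Thus $f'$ is an extension of $Q$, hence by hypothesis trivial: there is $P \xrightarrow{r} Q$ with $rf' = 1_Q$. Setting $h = rg' : B \to Q$ gives $hf = rg'f = rf'g = g$, so $g$ extends along $f$ and $Q$ is injective.

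The only genuine obstacle is verifying that the pushout leg $f'$ is monic; everything else is formal. If a prior result asserting that pushouts preserve monomorphisms is available it can simply be cited, but otherwise the cokernel description of the pushout above reduces the claim to the short diagram chase indicated.
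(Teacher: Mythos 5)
Your proof is correct and takes essentially the same route as the paper: for the forward direction you split the identity along the given extension, and for the converse you push out the monic $f$ along $g$ and use the hypothesis to retract the resulting extension of $Q$. The only difference is that you actually verify the key fact that the pushout leg $f'$ is monic, via the cokernel description of the pushout of $\langle f,-g\rangle$, whereas the paper simply asserts it; your diagram chase for that step is sound.
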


\begin{proof}
The forward direction is clear: if $I$ is injective and $I \xrightarrow{x} B$ is monic, then $I\xrightarrow{1}I$ extends to a map $B \xrightarrow{y} I$, meaning $yx = 1_I$.
\\

\noindent For the reverse direction, suppose $E$ has only trivial extensions. Let $A \xrightarrow{x} B$ be monic, and $A \xrightarrow{a}E$ be any map. We find a map $B \xrightarrow{y} E$ with $yx=a$.
\\ Make a pushout diagram
\[ \begin{tikzcd}
A \arrow{r}{x} \arrow[swap]{d}{a} & B \arrow{d}{b} \\
E \arrow{r}{e} & P
\end{tikzcd}
\]
and observe that since $x$ is monic, so is $e$. By assumption, $P$ must be a trivial extension of $E$, meaning there is $P \xrightarrow{f} E$ with $fe=1_E$.
Put $y=B \xrightarrow{fb} E$; then $yx=fbx=fea=1_Ea=a$.
\\
\end{proof}

\noindent An \textbf{essential extension} is a monic $A \rightarrow B$ such that for every nonzero monic $B' \rightarrow B$, ther intersections (of the images) of $A \rightarrow B$ and $B' \rightarrow B$ are nonzero.
\\

\begin{prop}
An extension $A \rightarrow B$ is essential if for every $B \rightarrow F$ such that $A \rightarrow B \rightarrow F$ is monic, we have that $B \rightarrow F$ is monic.
\end{prop}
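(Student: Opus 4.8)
The claim is in fact a biconditional (the stated ``if'' is really ``iff''), and the plan is to prove both implications by translating the condition on maps $B \to F$ into the subobject condition that defines ``essential''. The bridge is the following consequence of the theorem on intersections of subobjects: if $B \xrightarrow{c} F$ is a cokernel of a monic $j\colon B' \to B$, then $Ker(A \to B \xrightarrow{c} F) = A \cap B'$, where $A$ is identified with its image under the monic $A \to B$ and the intersection is taken in the subobject lattice of $B$. Indeed, this is the intersection theorem applied with $A_1 = B'$ (so that $c$ is the cokernel of $A_1 \to B$) and $A_2 = A$. Together with the Fact that a map is monic iff its kernel is zero, and with $KerCok = \mathrm{id}$ on subobjects (Theorem \ref{2.11}), this is essentially all that is needed.

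For the direction that the stated condition implies essentiality, I would argue by contraposition. Suppose $A \to B$ is not essential: there is a nonzero monic $j\colon B' \to B$ with $A \cap B' = 0$. Let $B \xrightarrow{c} F$ be a cokernel of $j$. By the bridge observation $Ker(A \to B \xrightarrow{c} F) = A \cap B' = 0$, so $A \to B \xrightarrow{c} F$ is monic; by the hypothesis, $c$ is then monic. But $c$ is a cokernel of the monic $j$, so by Theorem \ref{2.11} its kernel is $B' \neq 0$, contradicting that $c$ is monic. Hence $A \to B$ is essential.

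Conversely, assume $A \to B$ is essential and let $B \xrightarrow{g} F$ be such that $A \to B \xrightarrow{g} F$ is monic; I want $g$ monic. If not, $B' := Ker(g) \to B$ is a nonzero monic. Factor $g$ through its coimage, $g = B \xrightarrow{c} Coim(g) \xrightarrow{\bar g} F$, where $c$ is a cokernel of $B' \to B$ and $\bar g$ is monic. Since $\bar g$ is monic, $Ker(A \to B \xrightarrow{g} F) = Ker(A \to B \xrightarrow{c} Coim(g))$, and by the bridge observation the latter equals $A \cap B'$. As the left-hand side is zero by hypothesis, $A \cap B' = 0$, contradicting essentiality of $A \to B$ against the nonzero monic $B' \to B$. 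Hence $g$ is monic.

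The one step that requires care — and the only genuine obstacle — is the repeated identification $Ker(A \to B \to F) = A \cap B'$: one must arrange the hypotheses of the intersection theorem so that the relevant map out of $B$ is literally a cokernel of the monic $B' \to B$, which in the converse direction forces one first to replace $g$ by its coimage projection and to invoke that the map $Coim(g) \to F$ is monic. Once this identification is available, both implications collapse to ``monic $\Leftrightarrow$ zero kernel'' together with $KerCok = \mathrm{id}$ on subobjects.
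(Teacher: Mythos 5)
Your proof is correct, and it reorganizes the argument around a single lemma that the paper does not isolate: namely that for a monic $B' \rightarrow B$ with cokernel $B \xrightarrow{c} F$, the kernel of $A \rightarrow B \xrightarrow{c} F$ \emph{is} the intersection $A \cap B'$ --- which is indeed immediate from the pullback construction in the paper's theorem on intersections, where the intersection is built precisely as $Ker(A_2 \rightarrow A \rightarrow F)$. The paper instead proves each direction by a bespoke containment chase: in one direction it shows directly that any subobject contained in both $A$ and $Ker(B \rightarrow F)$ must be zero, and in the other it shows $Ker(A \rightarrow B \rightarrow Cok(B')) = 0$ by an image argument. Your version buys uniformity --- both directions collapse to ``monic iff zero kernel'' plus $KerCok = \mathrm{id}$ on subobjects --- at the cost of one extra reduction in the converse direction (factoring $g$ through its coimage so that the map out of $B$ is literally a cokernel of $Ker(g)$, and cancelling the monic $Coim(g) \rightarrow F$ from the kernel computation), a step you correctly flag and justify. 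The underlying skeleton (pass to $B' = Ker(B \rightarrow F)$, respectively to $F = Cok(B' \rightarrow B)$, and compare $A \cap B'$ with $Ker(A \rightarrow B \rightarrow F)$) is the same in both proofs; the gain is that you never have to chase individual test maps $C \rightarrow B$ or $A' \rightarrow A$.
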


\begin{proof}
\begin{itemize}
    \item Let $A \rightarrow B$ be essential, and $B \rightarrow F$ be such that $A \rightarrow B \rightarrow F$ is monic. We claim $B \rightarrow F$ is monic. Suppose not, then $B' \rightarrow B := Ker (B \rightarrow F) \neq 0$ is monic, so by assumption $$(A \rightarrow B) \cap (B' \rightarrow B) \neq 0.$$
    On the other hand, we show the intersection is zero, for a contradiction. Suppose the monic $C \rightarrow B$ is contained in the intersection, so $C \rightarrow B$ factors as $C \rightarrow A \rightarrow B$, and also factors through the kernel of $B \rightarrow F$. In particular, $$C \rightarrow A \rightarrow B \rightarrow F=C \rightarrow B\rightarrow F= 0.$$
    Since $C \rightarrow A$ is monic, we conclude $A \rightarrow B \rightarrow F = 0$, but this was a monic, so $A = 0$. Then $A \rightarrow B = 0$, so the intersection has to be zero.
    \item Conversely, suppose $B' \rightarrow B$ is a nonzero monic with $ (A \rightarrow B) \cap (B' \rightarrow B) = 0$. Set $B \rightarrow F := Cok(B' \rightarrow B)$. \\ We see that $B \rightarrow F$ is not monic --- otherwise $0 = Ker(B \rightarrow F) = B' \rightarrow B$. \\ On the other hand, $Ker(A \rightarrow B\rightarrow F) = 0$:
    Suppose $A' \rightarrow A$ is such that $A' \rightarrow A \rightarrow B \rightarrow F = 0$. We must show it is zero.
    \\ Consider the monic $Im(A' \rightarrow A \rightarrow B) = I \rightarrow B$. By definition of image, this factors through $A \rightarrow B$. It also factors through $B' \rightarrow B = Ker(B \rightarrow F)$, since $A' \rightarrow I$ is epic and
    $$A' \rightarrow I \rightarrow B \rightarrow F = A' \rightarrow A \rightarrow B \rightarrow F = 0.$$
    Hence it lies in the intersection $(A \rightarrow B) \cap (B' \rightarrow B) = 0$ as required.
\end{itemize}
\end{proof}

\begin{thm} \label{6.13}
In a Grothendieck category, an object is injective iff it has no proper essential extensions.
\end{thm}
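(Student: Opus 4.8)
The plan is to lean entirely on the two facts already established: that an object is injective iff every extension of it is trivial (i.e. split), and the criterion that an extension $A \to B$ is essential iff every $B \to F$ with $A \to B \to F$ monic is itself monic. So the whole statement reduces to proving, for a fixed object $Q$, that $Q$ has no proper essential extension iff every extension of $Q$ splits.

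The forward direction I expect to be short. If $Q$ is injective and $Q \xrightarrow{x} B$ is an essential extension, then $x$ is split, so $B \cong Q \oplus C$ with $x = \iota_1$. The complementary inclusion $\iota_2 \colon C \to B$ is monic, and it is nonzero whenever $C \neq 0$ (as $\pi_2\iota_2 = 1_C$); moreover $\iota_1$ and $\iota_2$ meet in the zero subobject of $B$, since any test map factoring through both composites is annihilated by $\pi_1$ on one side while equalling itself on the other. Essentiality then forces $\iota_2 = 0$, hence $C = 0$ and $x$ is an isomorphism, so $Q$ has no proper essential extension.

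For the converse, assume $Q$ has no proper essential extension and let $Q \xrightarrow{x} B$ be an arbitrary extension; the goal is to produce a retraction $B \to Q$. First I would consider the poset of subobjects $B' \hookrightarrow B$ whose intersection (inside $B$) with $Q$ is zero, and extract a maximal element by Zorn's Lemma. The one genuinely non-formal ingredient, and the reason the Grothendieck hypothesis is invoked, is that this poset is closed under unions of chains: for a chain $\{S_i\}$ of such subobjects the Grothendieck identity gives $Q \cap \bigcup S_i = \bigcup (Q \cap S_i) = 0$, while well-poweredness makes the poset a set and cocompleteness makes $\bigcup S_i$ exist. I expect this closure step, together with verifying that the resulting maximal $B'$ actually does its job below, to be the crux of the argument; the remainder is diagram chasing.

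Having fixed a maximal $B'$, set $B \xrightarrow{p} \bar B := Cok(B' \to B)$. I would then check that $Q \xrightarrow{px} \bar B$ is again an extension and is essential. It is monic because $Ker(px)$ lands, via the monic $x$, inside $Q \cap B' = 0$. For essentiality I would apply the preceding Proposition's criterion: given $\bar B \to F$ with $Q \to \bar B \to F$ monic, the subobject $B'' := Ker(B \to \bar B \to F)$ contains $B' = Ker(p)$ and satisfies $Q \cap B'' = 0$ (a map into the intersection becomes zero after passing to $F$, where $Q$ is embedded), so maximality gives $B'' = B'$; since $p$ is epic with kernel $B'$, it follows that $Ker(\bar B \to F) = 0$, i.e. $\bar B \to F$ is monic. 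Thus $Q \to \bar B$ is an essential extension, so by hypothesis it is an isomorphism, and composing $p$ with its inverse retracts $x$. Hence every extension of $Q$ splits, so $Q$ is injective, completing the proof.
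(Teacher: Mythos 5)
Your proposal is correct and follows essentially the same route as the paper: Zorn's Lemma on the poset of subobjects of $B$ meeting $Q$ trivially (with the Grothendieck identity supplying closure under chains), followed by showing that the quotient by a maximal such subobject is an essential extension, which must then be an isomorphism. The only cosmetic differences are that the paper passes to quotient objects and picks a minimal one rather than directly taking the cokernel of the maximal subobject, and that your forward direction argues from the definition of essential via the complementary summand instead of via the monic criterion; both variants are sound.
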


\begin{proof}
Certainly if $E$ is injective, then its only proper extensions are trivial, $E \xrightarrow{i_1} E \oplus B$, $B \neq 0$. 
\\ Then $E \oplus B \xrightarrow{\pi_1} E$ is not monic (it is epic but not an isomorphism); however $\pi_1 i_1=1_E$ is monic. By definition this is not essential.
\\

\noindent Conversely, let $E$ have no proper essential extensions. Let $E \rightarrow B$ be any extension; we show it must be trivial.
\\ Let $\mathcal{F}$ be the poset (ordered by inclusion) of subobjects of $B$ which have zero intersections with (the image of) $E \rightarrow B$.
\begin{itemize}
    \item[\textbf{Claim:}] If $\{B_i\}_I$ is an ascending chain in $\mathcal{F}$ then $\bigcup B_i \in \mathcal{F}.$
    \item[\textbf{Proof:}] $\bigcup B_i$ exists as a subobject of $B$. We show it has zero intersection with $Im(E \rightarrow B) = I \rightarrow B$:
    $$I \cap \bigcup B_i = \bigcup (I \cap B_i) = \bigcup 0 = 0.$$
    The claim is proven.
\end{itemize}
\noindent Hence, Zorn's Lemma guarantees us a maximal element $B' \subset B$ of $\mathcal{F}$.
\\ Let us switch perspectives by taking cokernels, to get a corresponding family $\tilde{\mathcal{F}}$ of quotient objects of $B$, where
$$B \rightarrow F \in \tilde{\mathcal{F}} \text{ iff } E \rightarrow B \rightarrow \text{ F is monic.}$$
This must have a minimal element $B \rightarrow B''$ (corresponding to $B' \subset B$).
\\ Certainly $E \rightarrow B \rightarrow B''$ is monic; let us show it is essential.
\\ Suppose $B'' \rightarrow F$ is such that $E \rightarrow B \rightarrow B'' \rightarrow F$ is monic, then by definition, the coimage of $B \rightarrow B'' \rightarrow F$ is smaller than $B \rightarrow B''$. By minimality of $B''$, it must be equal to this coimage, and in particular is monic.
\\

\noindent By hypothesis, the essential extension $E \rightarrow B \rightarrow B''$ cannot be proper, so it is an isomorphism, and $E \rightarrow B$ is trivial. (Writing $\psi = E \xrightarrow{j} B \xrightarrow{k} B''$, $\psi^{-1}k:B \rightarrow E$ is such that $\psi^{-1}k j = (kj)^{-1}kj=1_E$.)
\\
\end{proof}

\noindent The following falls out easily as a corollary. We include it because the key result of this section is proven similarly.
\begin{thm}[Baer's Criterion]
Let $R$ be a ring, and $A$ be a left $R$-module.
\\ If for every left ideal $I \subset R$ we have that $Hom(R,A) \rightarrow Hom(I,A)$ is epic, then $A$ is injective in $R$-Mod.
\end{thm}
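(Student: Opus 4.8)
The plan is to obtain this as a corollary of Theorem~\ref{6.13}. First I would record that $R$-Mod is a Grothendieck category: it is bicomplete and well-powered, and the chain condition $T \cap \bigcup S_i = \bigcup (T \cap S_i)$ holds for the same reason it held for $[\mathcal{A},Ab]$ pointwise --- along an ascending chain of submodules, unions and intersections are the literal set-theoretic ones, so the identity is immediate. By Theorem~\ref{6.13} it therefore suffices to show that the module $A$ of the hypothesis has no \emph{proper} essential extension; equivalently, that every essential extension $A \rightarrow B$ is an isomorphism. Since an extension is monic by definition and $R$-Mod is balanced, it is enough to prove every essential extension is epic.

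So suppose $A \rightarrow B$ is an essential extension; replacing $A$ by its image, regard $A$ as a submodule of $B$, and, aiming for a contradiction, suppose $A \subsetneq B$ and choose $b \in B \setminus A$. Put $I = \{ r \in R : rb \in A \}$, a left ideal of $R$. The assignment $r \mapsto rb$ is an $R$-linear map $g : I \rightarrow A$, so by hypothesis it is the restriction of some $\tilde g : R \rightarrow A$; set $a = \tilde g(1) \in A$, so that $ra = \tilde g(r) = g(r) = rb$ for every $r \in I$. The element $b - a$ lies in $B$, is nonzero (else $b = a \in A$), and I claim the submodule $R(b-a) \subseteq B$ meets $A$ trivially: if $r(b-a) \in A$, then $rb = r(b-a) + ra \in A$, so $r \in I$, whence $ra = rb$ and $r(b-a) = 0$.

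Thus $R(b-a) \rightarrow B$ is a nonzero monic whose intersection with $A \rightarrow B$ is zero, contradicting essentiality of $A \rightarrow B$. Hence $A = B$, so every essential extension of $A$ is an isomorphism, and Theorem~\ref{6.13} gives that $A$ is injective. The only non-formal point --- the ``main obstacle'' --- is the classical element-chase in the middle paragraph: producing from a hypothetical $b \notin A$ exactly a nonzero subobject of $B$ with trivial intersection with $A$. Everything else is bookkeeping, and crucially no transfinite induction is needed here, because the Zorn's-lemma content has already been spent in the proof of Theorem~\ref{6.13}.
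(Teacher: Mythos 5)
Your proposal is correct and follows essentially the same route as the paper: reduce via Theorem~\ref{6.13} to showing $A$ has no proper essential extension, then for $b \in B \setminus A$ form the ideal $I = \{r : rb \in A\}$, extend $r \mapsto rb$ to a map $R \to A$ with value $a$ at $1$, and observe that $R(b-a)$ is a nonzero submodule meeting $A$ trivially. The only cosmetic difference is that the paper packages the ideal $I$ as a pullback of $R \xrightarrow{b} B$ along $A \hookrightarrow B$, whereas you define it directly by elements; the element-chase at the core is identical.
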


\begin{proof}
By the theorem above, it suffices to show that $A$ has no proper essential extensions.
\\ Let $A \subset B$, $x \in B \backslash A$. We show $A \subset B$ is not essential.
\\ Let $R \xrightarrow{x} B$ be the map sending $1 \mapsto x$. Make a pullback diagram:

\[ \begin{tikzcd}
I \arrow{r}{i_1} \arrow[swap]{d}{i_2} & R \arrow{d}{x} \\
A \arrow{r}{j} & B
\end{tikzcd}
\]
$I = \{(a,r): a=rx\} = \{(rx,x): rx \in A\}$ may be identified with the ideal $\{r \in R: rx \in A\}$,
\\ so by assumption $I \xrightarrow{i_2} A$ extends to a map $R \rightarrow A$: there is some $y \in A$ with $I \xrightarrow{i_1} R \xrightarrow{y} A = I \xrightarrow{i_2} A$.
\\

\noindent We have $x-y \neq 0$ since $x \notin A \ni y$. On the other hand, the submodule $M = \{r(x-y): r \in R\}$ of $B$ generated by $x-y$ meets $A$ only trivially.
\\ In other words, consider the nonzero monic $M \subset B$. $B$ is not essential, because the intersection of the images of $A \subset B$ and $M \subset B$ is zero --- given $r(x-y) \in A$ where $r \in R$, then $rx=r(x-y)+ry \in A$, so $r \in I$, so $r(x-y) = 0$ because
$$rx= xi_1(r)= ji_2(r)=jyi_1(r) = y(r) = ry.$$
\\
\end{proof}

\begin{defn}
An \textbf{injective envelope} of $A$ is an injective essential extension.
\end{defn}

\noindent An injective envelope is a maximal essential extension and a minimal injection extension.
\\
\begin{lem}
An essential extension of an essential extension is essential.
\end{lem}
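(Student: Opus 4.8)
The plan is to avoid chasing intersections of subobjects directly and instead lean entirely on the alternative characterisation of essential extensions proved just above: an extension $X \to Y$ is essential iff every map $Y \to F$ for which the composite $X \to Y \to F$ is monic is itself monic. Given essential extensions $A \xrightarrow{f} B$ and $B \xrightarrow{g} C$, the composite $gf$ is certainly monic (a composite of monics), so it only remains to verify the lifting property for $gf$.

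So I would take an arbitrary map $C \xrightarrow{h} F$ with $A \xrightarrow{gf} C \xrightarrow{h} F$ monic, i.e. $h\circ g\circ f$ monic, and aim to show $h$ is monic. First regroup this composite as $(h g)\circ f$; since $A \xrightarrow{f} B$ is essential and $(hg)f$ is monic, the characterisation gives that $hg = B \xrightarrow{g} C \xrightarrow{h} F$ is monic. Now apply the characterisation a second time to the essential extension $B \xrightarrow{g} C$: we have a map $C \xrightarrow{h} F$ with $g$ followed by $h$ monic, hence $h$ is monic. This shows $A \xrightarrow{gf} C$ satisfies the characterising property, so it is essential.

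I do not expect any real obstacle here; the only thing to be slightly careful about is that the cited proposition is being used in both directions (to deduce monicity of the postcomposition, and again to conclude), but its proof above establishes the equivalence, so this is legitimate. One could alternatively argue directly: given a nonzero monic $C' \to C$, use that $B \to C$ is essential to find a nonzero subobject of $C'$ meeting (the image of) $B$, intersect with that image to stay inside $B$, then use that $A \to B$ is essential; but this requires invoking the lattice identities and is strictly more work than the two-line argument via the characterisation, so I would present the latter.
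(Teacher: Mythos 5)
Your proposal is correct and is essentially identical to the paper's own proof: both apply the characterisation of essential extensions via monicity of composites, first to $A \to B$ to get $hg$ monic, then to $B \to C$ to get $h$ monic. Your remark that the preceding proposition is stated as an implication but proved as an equivalence (and is used here in both directions) is a fair and accurate observation.
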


\begin{proof}
Let $A \xrightarrow{a}B$, $B \xrightarrow{b}C$ be essential extensions. We show the extension $A \xrightarrow{ba} C$ is essential.
\\ Let $C \xrightarrow{c} F$ be such that $cba$ is monic.
\\ Since $a$ is essential, $cb$ is monic. Since $b$ is essential, $c$ is monic.
\\
\end{proof}

\begin{lem} \label{6.22}
Let $A \rightarrow E$ be an extension of $A$ in a Grothendieck category, and $\{E_i\}$ an ascending chain of subobjects between (the image of) $A$ and $E$. If $E_i$ is an essential extension of $A$ for each $i$, then $\bigcup E_i$ is an essential extension of $A$.
\end{lem}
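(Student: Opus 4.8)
The plan is to verify essentiality of $A \to U$, where $U := \bigcup_i E_i$, by means of the characterisation in the Proposition above: it suffices to show that any map $U \xrightarrow{c} F$ for which $A \to U \xrightarrow{c} F$ is monic is itself monic. So I would fix such a $c$ and aim to prove $\ker(c) = 0$.

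The first step is to restrict $c$ to each rung of the chain. Write $g_i : E_i \to U$ for the (monic) structure inclusion; the composite $A \to E_i \xrightarrow{g_i} U \xrightarrow{c} F$ coincides with $A \to U \xrightarrow{c} F$ and is therefore monic. Since $A \to E_i$ is an essential extension, the Proposition gives that $c g_i : E_i \to F$ is monic, i.e. $\ker(c g_i) = 0$. As $g_i$ is monic, $\ker(c g_i) \to E_i \xrightarrow{g_i} U$ realises the pullback $E_i \cap \ker(c)$; so, viewing each $E_i$ as a subobject of $U$, we conclude $E_i \cap \ker(c) = 0$ in the lattice of subobjects of $U$, for every $i$.

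The second step feeds this into the Grothendieck hypothesis. In the lattice of subobjects of $U$ the images of the $E_i$ still form an ascending chain, and their join is all of $U$ (it is the join in subobjects of $E$ by the very definition of $\bigcup_i E_i$, and remains a join once one passes to subobjects of $U$, since any upper bound of the $E_i$ below $U$ is also an upper bound in subobjects of $E$). Applying the Grothendieck identity with $\ker(c)$ as the distinguished subobject,
\[
\ker(c) \;=\; \ker(c) \cap U \;=\; \ker(c) \cap \bigcup_i E_i \;=\; \bigcup_i \big( \ker(c) \cap E_i \big) \;=\; \bigcup_i 0 \;=\; 0 .
\]
Hence $c$ is monic, and the Proposition then gives that $A \to \bigcup_i E_i$ is essential.

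The one place I would be careful is the identification in the first step: turning "$c g_i$ monic" into "$E_i \cap \ker(c) = 0$ as subobjects of $U$" uses crucially that $g_i$ is monic. Beyond that there is no genuine diagram chase — just the essential-extension Proposition applied rung by rung, followed by a single invocation of the Grothendieck law. (Alternatively, one can argue straight from the definition of essential extension: a nonzero subobject $U' \le U$ satisfies $U' = U' \cap \bigcup_i E_i = \bigcup_i (U' \cap E_i)$, so $U' \cap E_{i_0} \ne 0$ for some $i_0$, whence essentiality of $A \to E_{i_0}$ together with pushing forward along the monic $g_{i_0}$ yields $Im(A \to U) \cap U' \ne 0$; this variant instead needs that monics preserve intersections of subobjects.)
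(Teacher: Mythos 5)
Your argument is correct, and the paper's own proof is precisely the three-line alternative you sketch in your closing parenthesis: take a nonzero subobject $S$ of $\bigcup E_i$, write $S = S \cap \bigcup E_i = \bigcup (S \cap E_i)$ by the Grothendieck law, pick $i$ with $S \cap E_i \neq 0$, and use essentiality of $A \rightarrow E_i$ to conclude $S \cap A = (S \cap E_i) \cap A \neq 0$. Your primary route instead runs the equivalent map-based criterion (monicity of every $c : U \rightarrow F$ for which $A \rightarrow U \rightarrow F$ is monic), which is also sound but costs you two extra pieces of bookkeeping that the direct definition avoids: identifying $Ker(cg_i)$ with the pullback $E_i \cap Ker(c)$ inside the subobject lattice of $U$, and checking that the join of the $E_i$ is unchanged when recomputed in that lattice rather than in the subobjects of $E$ --- both points you correctly flag and handle. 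In either formulation the entire content is a single application of the Grothendieck identity to the chain $\{E_i\}$ against one distinguished subobject ($Ker(c)$ for you, an arbitrary nonzero $S$ for the paper); the paper's version is shorter only because the definition of essential extension is already phrased in terms of intersections of subobjects, so no translation through the Proposition is needed.
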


\begin{proof}
Let $S$ be any nonzero subobject of $\bigcup E_i$.
\\ Then $S = S \cap \bigcup E_i = \bigcup (S \cap E_i)$, hence $S \cap E_i \neq 0$ for some $i$.
\\ Since $E_i$ is essential, we have $S \cap A = (S \cap E_i) \cap A \neq 0$.
\\
\end{proof}

\noindent Although $E$ does not appear explicitly in the proof above, the proof really does hinge on the fact that $A$ and the $E_i$ are contained in $E$; otherwise we could not even speak of $\bigcup E_i$.

\noindent It is the next lemma that asserts that every ascending chain of extensions may indeed be embedded in a common extension $E$, and therefore, the lemma above becomes the statement that every ascending chain of essential extensions is bounded by an essential extension.
\\

\begin{thm} \label{6.23}
Let $\mathcal{B}$ be a Grothendieck category, $J$ an ordered set, and $\{E_j \rightarrow E_k\}_{j < k}$ a family of monics such that whenever $j < k< l$, $E_j \rightarrow E_k \rightarrow E_l = E_j \rightarrow E_l$.
\\ Then there is an object $E \in \mathcal{B}$ such that whenever $j < k$, $$E_j \rightarrow E_k \rightarrow E = E_j \rightarrow E$$.
\end{thm}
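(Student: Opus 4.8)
The plan is to realize $E$ as the colimit of the diagram encoded by the hypothesis, bringing in the Grothendieck property only at the end, to make the structure maps monic.

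First I would note that the family $\{E_j \to E_k\}_{j<k}$, together with the compatibility $E_j \to E_k \to E_l = E_j \to E_l$ and the convention $E_j \xrightarrow{1} E_j$, is precisely a functor $D : J \to \mathcal{B}$ from the ordered set $J$, viewed as a category. Since $\mathcal{B}$ is Grothendieck it is bicomplete, so $D$ has a colimit; concretely, take $E = Cok(\Sigma_{j<k} E_j \xrightarrow{\delta} \Sigma_j E_j)$, where $\delta$ restricted to the $(j<k)$-summand is $\iota_k\circ(E_j\to E_k) - \iota_j$ and $\iota_j$ is the $j$-th inclusion. Writing $\phi_j : E_j \to E$ for the composite $E_j \xrightarrow{\iota_j} \Sigma_j E_j \to E$, the fact that the quotient map kills $\delta$ gives, summand by summand, $\phi_k\circ(E_j\to E_k) = \phi_j$, that is, $E_j \to E_k \to E = E_j \to E$ for all $j<k$; this is exactly the assertion.

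For the way the theorem is actually used (Lemma \ref{6.22} and the remark preceding it), one wants more, namely that each $\phi_j$ is monic, so that the $E_j$ become subobjects of a common $E$. Here I would use that the intended index sets are chains, hence directed, as follows: for fixed $j_0$ the set $\{k \in J: k\geq j_0\}$ is cofinal in $J$, so $E$ is also the colimit over it with the same structure maps; over this cofinal part the constant functor at $E_{j_0}$ maps into $D$ through the monics $E_{j_0}\to E_k$, a levelwise-monic map of directed diagrams. Since directed colimits are exact in a Grothendieck category (a standard consequence of the chain axiom $T\cap\bigcup S_i = \bigcup(T\cap S_i)$ together with bicompleteness), the colimit functor sends this monomorphism of diagrams to a monic $E_{j_0}\to E$, i.e.\ $\phi_{j_0}$ is monic. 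The same bookkeeping shows that if the $E_j$ were all extensions of a fixed $A$, the composites $A\to E_j\to E$ coincide and give a monic $A\to E$ lying below the ascending chain of subobjects $\{\phi_j(E_j)\subset E\}$ — exactly the input Lemma \ref{6.22} needs.

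The existence of $E$ itself is essentially content-free — it is just cocompleteness — so the only real work, and the only place the hypotheses genuinely enter, is the monic upgrade: directedness of $J$ for the cofinality step, and the Grothendieck axiom for exactness of directed colimits. This is also why the result is useful only for chains: over a non-directed poset the canonical maps into $\varinjlim D$ can collapse (e.g.\ forcing two monics whose targets have no common upper bound to become equal can kill everything), so the embedding conclusion would simply fail. Finally, since $J$ is a set the coproducts above are small, so $E$ genuinely exists inside $\mathcal{B}$ and there is no size issue to address.
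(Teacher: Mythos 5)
Your overall strategy coincides with the paper's: the object the paper constructs is exactly the directed colimit you describe, just presented differently. The paper takes $S=\Sigma_J E_j$, defines for each $k$ a map $h_k: S\to S$ that sends the summand $E_j$ with $j\le k$ along $E_j\to E_k\xrightarrow{\iota_k} S$ and fixes the summands above $k$, and sets $E = Cok\bigl(\bigcup_k Ker(h_k)\to S\bigr)$; for a totally ordered $J$ this agrees with your $Cok(\delta)$, and both presentations give the compatibility $E_j\to E_k\to E = E_j\to E$ essentially for free. You are also right that the only substantive point is that the maps $E_j\to E$ are monic --- that is what the remark before the theorem and Lemma \ref{6.22} actually need.

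The gap is in how you establish that monicity. You invoke ``directed colimits are exact in a Grothendieck category'' as a standard consequence of the chain axiom, but in this paper a Grothendieck category is \emph{defined} only by the lattice identity $T\cap\bigcup S_i=\bigcup(T\cap S_i)$ for chains of subobjects, and the equivalence of that identity with exactness (or even monic-preservation) of directed colimits is proved nowhere in the text; Theorem \ref{6.23} is precisely the special case of that equivalence which the development needs, so citing the general fact is circular at the level of this paper. The missing work is short and is exactly what the paper does: present the colimit as $S/\bigcup_k Ker(h_k)$, observe that $\{Ker(h_k)\}_k$ is an ascending chain of subobjects of $S$ (because $h_{k'}=h_{k'}\circ h_k$ for $k\le k'$), note that each composite $E_j\to S\xrightarrow{h_k} S$ is monic so that $Im(E_j\to S)\cap Ker(h_k)=0$, and then apply the chain axiom to conclude $Im(E_j\to S)\cap\bigcup_k Ker(h_k)=\bigcup_k\bigl(Im(E_j\to S)\cap Ker(h_k)\bigr)=0$, i.e.\ each $E_j\to E$ is monic. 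Once this is done directly, your cofinality reduction is unnecessary. A minor further caveat: your argument quietly uses directedness of $J$ (for cofinality of $\{k\ge j_0\}$ and for connectedness of the constant diagram), and the union $\bigcup_k Ker(h_k)$ is only guaranteed to be governed by the paper's axiom when it is a union of a \emph{chain}; both points are harmless here because ``ordered set'' means totally ordered, which is all the application to transfinite chains of essential extensions requires, but they should be said rather than left implicit.
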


\begin{proof}
Let $S = \Sigma_J E_j$. For each $j \in J$ let $E_j \xrightarrow{\iota_j} S$ be the $j$th inclusion. For each $j \in J$, define a map $S \xrightarrow{h_j} S$ on the component $E_k$ as

$$E_k \xrightarrow{\iota_k} S \xrightarrow{h_j} S = 
\left\{
	\begin{array}{ll}
		E_k \rightarrow E_j \xrightarrow{\iota_j} S  & \mbox{if } k \leq j \\
		E_k \xrightarrow{\iota_k} S & \mbox{if } j \leq k.
	\end{array}
\right.
$$

\noindent Let $S \xrightarrow{h} E$ be an epic such that $Ker(h) = \bigcup_k Ker(h_k)$. (Just take the cokernel of the subobject $\bigcup_k Ker(h_k)$ of $S$.)
\\ Note that $\{Ker(h_k)\}$ is an ascending family, because for $k \leq {k'}$ we have
$$S \xrightarrow{h_{k'}} S = S \xrightarrow{h_k} S \xrightarrow{h_{k'}} S.$$
\\ It remains to see that each $E_j \xrightarrow{\iota_j} S \xrightarrow{h} E$ is monic. \\ For this, it suffices to show that $Im(E_j \rightarrow S) \cap ( \bigcup_k Ker(h_k)) =0$.
\\ We know each $E_j \rightarrow S \xrightarrow{h_k} S$ is monic, so $Im(E_j \rightarrow S) \cap ( Ker(h_k)) =0$ for each $k$, and we are done by the Grothendieck axiom.
\\
\end{proof}

\noindent Recall our goal for this section: to prove that in a Grothendieck category with a generator, every object has an injective envelope.
\\ Let $\mathcal{B}$ be a Grothendieck category. By Theorem \ref{6.13} we may choose for each non-injective object $A \in \mathcal{B}$ a proper essential extension $E(A):= (A \rightarrow B)$. If $A \in \mathcal{B}$ is injective, setting $E(A) := A \rightarrow A$ already gives us an injective envelope.
\\Define $E^\gamma (A)$ by transfinite recursion, as follows:
\begin{itemize}
    \item on zero: $E^0 (A)=E(A)$;
    \item on successor ordinals: $E^{\gamma + 1} (A) = E \rightarrow E^\gamma (A) \rightarrow E(E^{\gamma} (A))$;
    \item on limit ordinals: $E^\alpha(A)$ is a minimal essential extension that bounds $E^\gamma (A)$ for all $\gamma < \alpha$. (Such an extension exists by Theorem \ref{6.23}.)
\end{itemize}
\noindent Then the sequence $\{ E^\gamma (A) \}$ becomes stationary precisely when it reaches an injective essential extension, i.e., an injective envelope of $A$. We show that this does happen, when $\mathcal{B}$ has a generator.

\begin{thm}
If $\mathcal{B}$ is a Grothendieck category with a generator $G$ then every object has an injective envelope.
\end{thm}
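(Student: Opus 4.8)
The plan is to show that the transfinite sequence $\{E^{\gamma}(A)\}$ described above becomes stationary; once it does, say at stage $\gamma^{*}$, the map $E^{\gamma^{*}}(A)\to E^{\gamma^{*}+1}(A)=E^{\gamma^{*}}(A)\to E(E^{\gamma^{*}}(A))$ is an isomorphism, so $E^{\gamma^{*}}(A)$ has no proper essential extension and is therefore injective by Theorem \ref{6.13}. Since $A\to E^{\gamma^{*}}(A)$ is an essential extension --- essential at successor stages because ``an essential extension of an essential extension is essential'', and essential at limit stages by construction --- it is an injective envelope of $A$. Throughout I will use that the construction supplies, for $\gamma<\delta$, compatible monics $E^{\gamma}(A)\to E^{\delta}(A)$ forming a direct system, and that each $E^{\gamma}(A)$ is an essential extension of $A$.

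First I would reduce the problem to a cardinality bound: there is an infinite cardinal $\kappa=\kappa(A,G)$ such that $|\mathrm{Hom}(G,E)|\le\kappa$ for every essential extension $A\to E$. Granting this, suppose for contradiction that the sequence never stabilises, so every $E^{\gamma}(A)\to E^{\gamma+1}(A)$ is a proper monic. Put $\lambda:=\kappa^{+}$ and apply Theorem \ref{6.23} to the subsystem $\{E^{\gamma}(A)\to E^{\delta}(A)\}_{\gamma<\delta\le\lambda}$, which is indexed by the genuine ordered set $\lambda+1$; this yields an object $\hat E$ with compatible monics $E^{\gamma}(A)\to\hat E$. Now apply the faithful left-exact functor $\mathrm{Hom}(G,-)$: the sets $S_{\gamma}:=\mathrm{Im}\big(\mathrm{Hom}(G,E^{\gamma}(A))\to\mathrm{Hom}(G,\hat E)\big)$ form an increasing chain of subsets of $\mathrm{Hom}(G,\hat E)$, and each inclusion $S_{\gamma}\subseteq S_{\gamma+1}$ is strict: if $\mathrm{Hom}(G,-)$ carried the proper monic $E^{\gamma}(A)\to E^{\gamma+1}(A)$ onto, then every $G\to E^{\gamma+1}(A)$ would factor through $E^{\gamma}(A)$, hence compose to zero with the nonzero cokernel of $E^{\gamma}(A)\to E^{\gamma+1}(A)$, contradicting that $G$ is a generator (Proposition \ref{3.33}). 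Thus $\{S_{\gamma}\}_{\gamma\le\lambda}$ is a strictly increasing chain of subsets of length $\kappa^{+}$, so $|S_{\lambda}|\ge\kappa^{+}$; but $E^{\lambda}(A)$ is an essential extension of $A$, so $|S_{\lambda}|\le|\mathrm{Hom}(G,E^{\lambda}(A))|\le\kappa$, a contradiction. Hence the sequence stabilises at some $\gamma^{*}<\kappa^{+}$, and we finish as above.

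The remaining and main task is the cardinality bound, which I expect to be the only real obstacle. Let $R:=\mathrm{End}(G)$ and recall from the proof of Theorem \ref{Mitch} that $\mathrm{Hom}(G,-)$ is a faithful functor $\mathcal{B}\to R\text{-Mod}$, where $\mathrm{Hom}(G,X)$ carries the left $R$-module structure $r\cdot x=x\circ r$ and every induced map is $R$-linear. The key claim is: if $A\to E$ is an essential extension in $\mathcal{B}$, then $\mathrm{Hom}(G,A)\le\mathrm{Hom}(G,E)$ is an essential extension of left $R$-modules. To prove it, take a nonzero $R$-submodule $M\le\mathrm{Hom}(G,E)$ and a nonzero $\phi\in M$; write $\phi=\iota\pi$ with $\pi:G\twoheadrightarrow\mathrm{Im}(\phi)$ and $\iota:\mathrm{Im}(\phi)\hookrightarrow E$. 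Essentiality gives $A_{0}:=\mathrm{Im}(\phi)\cap A\ne 0$, and the pullback $G_{0}:=\pi^{-1}(A_{0})\le G$ of this nonzero monic along the epi $\pi$ is again nonzero, with $\pi|_{G_{0}}:G_{0}\twoheadrightarrow A_{0}$. The composite $G_{0}\xrightarrow{\pi|_{G_{0}}}A_{0}\hookrightarrow E$ is nonzero, so since $G$ is a generator there is $s:G\to G_{0}$ with $G\xrightarrow{s}G_{0}\to E$ nonzero; then $r:=\big(G\xrightarrow{s}G_{0}\hookrightarrow G\big)\in R$ satisfies $r\cdot\phi=\phi\circ r\in M$, this map factors through $A_{0}\le A$, and it is nonzero by construction, so $0\ne r\cdot\phi\in M\cap\mathrm{Hom}(G,A)$. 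Granting the key claim, $\mathrm{Hom}(G,E)$ is an essential extension of the \emph{fixed} $R$-module $N:=\mathrm{Hom}(G,A)$; since injective hulls exist in $R\text{-Mod}$ (by Baer's Criterion), every essential extension of $N$ embeds into $E_{R}(N)$, whence $|\mathrm{Hom}(G,E)|\le|E_{R}(N)|$, a cardinal depending only on $A$ and $G$. Take $\kappa$ to be any infinite cardinal at least this large.

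One final remark on the shape of the argument: the detour through Theorem \ref{6.23} is genuinely needed, because $\mathrm{Hom}(G,-)$ need not preserve the directed unions occurring at limit stages of the construction, so one cannot simply embed all the modules $\mathrm{Hom}(G,E^{\gamma}(A))$ compatibly into the fixed module $E_{R}(N)$; passing first to a common ambient object $\hat E$ in $\mathcal{B}$, and only afterwards applying $\mathrm{Hom}(G,-)$, is what circumvents this.
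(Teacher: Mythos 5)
Your proposal follows essentially the same route as the paper: transport the problem to $R$-Mod via the faithful left-exact functor $\mathrm{Hom}(G,-)$, show that it preserves essential extensions, and bound every essential extension of $A$ by a fixed injective extension of $\mathrm{Hom}(G,A)$ in $R$-Mod so that the transfinite chain $\{E^{\gamma}(A)\}$ must stabilise at an injective envelope. In fact your write-up is tighter than the paper's at the two delicate points: you invoke the generator property to produce the nonzero element of $M\cap\mathrm{Hom}(G,A)$ (where the paper asserts, without justification, that $1_G$ factors through the pullback), and you make the final counting argument precise by first assembling the chain inside a single ambient object $\hat{E}$ via Theorem \ref{6.23} before applying $\mathrm{Hom}(G,-)$, rather than comparing subobjects of $Q$ through non-canonical embeddings.
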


\begin{proof}
We start out similarly to the proof of Theorem \ref{Mitch}: Let $R = End (G)$; there is a functor $F: \mathcal{F} \rightarrow R$-Mod sending $\mathcal{B} \ni B \mapsto Hom(G,B) \in R$-Mod.
\begin{itemize}
    \item[\textbf{Claim}:] If $A \rightarrow E$ is an essential extension in $\mathcal{B}$, then $FA \rightarrow FE$ is an essential extension in $R$-Mod.
    \item[\textbf{Proof:}] $FA \rightarrow FE$ is an extension because $F=Hom(G,-)$ is left-exact.
    \\ Let $M \subset FE$ be a nonzero submodule, so there is $x \in M$. We need to construct a nonzero element in $M \cap Im(FA \rightarrow FE)$.
    \\ $x \in M \subset FE = Hom(G,E)$, so take a pullback diagram:
\[ \begin{tikzcd}
P \arrow{r} \arrow[swap]{d} & G \arrow{d}{x} \\
A \arrow{r} & E
\end{tikzcd}
\]
    $A \rightarrow E$ was essential and $x \neq 0$, so $P \neq 0$, and $G \xrightarrow{1} G$ factors as $G \rightarrow P \rightarrow G$.
    \\ Now $0 \neq G \rightarrow P \rightarrow G \xrightarrow{x} E$ is an element of $M$, and is contained in $Im(F(A \rightarrow E))$.
\end{itemize}
\noindent Now, we use the fact that for any ring $R$, $R$-Mod \textit{has enough injectives}: there is an injective extension out of every $R$-module. In particular there is an injective extension $FA \rightarrow Q$, which factors by injectivity of $Q$ as $FA \rightarrow FE \rightarrow Q$. Further, we have that $FE$ is isomorphic to a subobject of $Q$.
\\ The above holds for \textit{any} essential extension $E$ of $A$, so, simply take any ordinal $\Omega$ whose cardinality is larger than that of the set of subobjects of $Q$. Since $F$ is an embedding, any sequence of proper essential extensions of $A$ must terminate before $\Omega$. (There are no more essential extensions $A \rightarrow E$ than the extensions $FA \rightarrow FE$, but there are no more of \textit{these} than subobjects of $Q$.)
\end{proof}
\section{The Embedding Theorem}

\begin{prop} \label{7.11}
If an object $E \in [\mathcal{A},Ab]$ is injective, then it is a right-exact functor.
\end{prop}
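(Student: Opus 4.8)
The plan is to transfer right-exactness across the (contravariant) Yoneda embedding $H\colon \mathcal{A}^{op}\to[\mathcal{A},Ab]$, exploiting that $Hom_{[\mathcal{A},Ab]}(-,E)$ is an \emph{exact} contravariant functor precisely because $E$ is injective.

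First I would fix a right-exact sequence $A'\xrightarrow{u}A\xrightarrow{v}A''\to 0$ in $\mathcal{A}$; the goal is to show $EA'\xrightarrow{Eu}EA\xrightarrow{Ev}EA''\to 0$ is exact in $Ab$, i.e. that $Ev$ is epic and $Im(Eu)=Ker(Ev)$. Read in $\mathcal{A}^{op}$, the given sequence is the left-exact sequence $0\to A''\to A\to A'$ (exactness in the middle dualizes by Lemma \ref{2.21}), so applying the left-exact functor $H$ yields an exact sequence $0\to H^{A''}\xrightarrow{H^v}H^A\xrightarrow{H^u}H^{A'}$ in $[\mathcal{A},Ab]$. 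Next I would factor $H^u$ through its image inside $H^{A'}$: write $H^u=\iota\pi$ with $\pi\colon H^A\twoheadrightarrow W$ and $\iota\colon W\hookrightarrow H^{A'}$, where $W=Im(H^u)$; since $Ker(\pi)=Ker(H^u)=Im(H^v)$, the sequence $0\to H^{A''}\xrightarrow{H^v}H^A\xrightarrow{\pi}W\to 0$ is short exact in $[\mathcal{A},Ab]$.

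Then I would apply $Hom_{[\mathcal{A},Ab]}(-,E)$, which is exact since $E$ is injective. This carries the displayed short exact sequence to a short exact sequence $0\to Hom(W,E)\xrightarrow{\pi^{*}}Hom(H^A,E)\xrightarrow{(H^v)^{*}}Hom(H^{A''},E)\to 0$, and carries the monic $\iota$ to an epic $\iota^{*}\colon Hom(H^{A'},E)\twoheadrightarrow Hom(W,E)$. By the Yoneda lemma $Hom(H^B,E)\cong EB$ naturally, and under this identification $(H^v)^{*}$ becomes $Ev$ while $(H^u)^{*}=\pi^{*}\iota^{*}$ becomes $Eu$. From the short exact sequence, $Ev$ is epic and $Ker(Ev)=Im(\pi^{*})$; and because $\iota^{*}$ is epic, $Im(Eu)=Im(\pi^{*}\iota^{*})=Im(\pi^{*})=Ker(Ev)$. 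Hence $EA'\to EA\to EA''\to 0$ is exact, so $E$ is right-exact.

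I expect the main obstacle to be purely organisational: tracking the three variances (of $H$, of $Hom(-,E)$, and of the Yoneda identification) so that $H^u,H^v$ and their duals point in the correct directions and get matched with $Eu,Ev$; and, within that, pinpointing the one step that genuinely needs $E$ injective rather than merely additive or left-exact, namely that $Hom(-,E)$ turns the monic $\iota$ into an epic (equivalently, that it is right-exact as a contravariant functor). Everything else is formal.
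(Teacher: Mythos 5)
Your proof is correct and follows essentially the same route as the paper: apply the left-exact Yoneda embedding to turn the right-exact sequence in $\mathcal{A}$ into a left-exact sequence of representables, use injectivity of $E$ (exactness of $Hom(-,E)$) to dualize it back into a right-exact sequence in $Ab$, and identify the result with $EA'\to EA\to EA''\to 0$ via the Yoneda Lemma. The only difference is that you unroll the step ``an exact functor preserves exactness'' explicitly by factoring $H^u$ through its image, which the paper leaves implicit.
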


\begin{proof}
Let $A' \rightarrow A \rightarrow A'' \rightarrow 0$ be an exact sequence in $\mathcal{A}$. Applying the Yoneda embedding $H$, we obtain in $[\mathcal{A},Ab]$ an exact sequence
$$0 \rightarrow H^{A''} \rightarrow H^{A} \rightarrow H^{A'}.$$
By definition of $E$ being injective, the functor $Hom(-,E)$ is exact. Therefore we obtain in $Ab$ an exact sequence
$$ Hom(H^{A'},E) \rightarrow Hom(H^{A},E) \rightarrow Hom(H^{A''},E) \rightarrow 0.$$
By the Yoneda Lemma, this sequence is isomorphic to
$$ E(A') \rightarrow E(A) \rightarrow E(A'') \rightarrow 0,$$
so $E$ is right-exact.
\\
\end{proof}

\noindent A functor is \textbf{mono} if it preserves monics. In particular a right-exact functor is exact iff it is mono, so an injective mono functor is exact.
\\ The injective envelope of a mono functor is an exact functor:
\\

\begin{lem} \label{7.12}
Let $M \rightarrow E$ be an essential extension in $[\mathcal{A}, Ab]$. If $M$ is a mono functor, then so is $E$.
\end{lem}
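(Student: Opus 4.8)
The plan is to argue by contradiction. Suppose $E$ is \emph{not} mono: there is a monomorphism $x \colon A' \to A$ in $\mathcal{A}$ such that $E(x) \colon E(A') \to E(A)$ fails to be injective, and I fix a nonzero element $n \in \ker E(x) \subseteq E(A')$. The idea is to use $n$ to build a nonzero subfunctor of $E$ which, by essentiality of $M \to E$, must intersect $M$ nontrivially, and then to derive a contradiction with $M$ being a mono functor.

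First I would manufacture the subfunctor $\langle n\rangle \subseteq E$ \textquotedblleft generated by $n$\textquotedblright, defined on objects by $\langle n\rangle(B) = \{\, E(f)(n) : f \in \mathrm{Hom}_{\mathcal{A}}(A', B)\,\}$. The crucial point — and the first place the ambient hypotheses enter — is that every object of $[\mathcal{A}, Ab]$ is an \emph{additive} functor, so $E(f)(n) + E(g)(n) = E(f+g)(n)$, $-E(f)(n) = E(-f)(n)$ and $0 = E(0)(n)$; hence each $\langle n\rangle(B)$ is genuinely a subgroup of $E(B)$, and $E(g)\bigl(E(f)(n)\bigr) = E(gf)(n)$ shows $\langle n\rangle$ is a subfunctor. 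Taking $f = \mathrm{id}_{A'}$ gives $n \in \langle n\rangle(A')$, so $\langle n\rangle \neq 0$.

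Next I would invoke essentiality of $M \to E$, together with the fact recorded earlier that intersections in $[\mathcal{A}, Ab]$ are computed pointwise: since $\langle n\rangle$ is a nonzero subfunctor of $E$, the subfunctor $\langle n\rangle \cap M$ is nonzero, so there is an object $B$ and a nonzero element $m \in \langle n\rangle(B) \cap M(B)$. Writing $\iota \colon M \hookrightarrow E$ for the inclusion natural transformation, $\iota_B(m) = E(f)(n)$ for some $f \colon A' \to B$. Then I would transport the relation $E(x)(n) = 0$ across $f$ by forming the pushout of $x$ along $f$ in $\mathcal{A}$, yielding maps $f' \colon A \to P$ and $x' \colon B \to P$ with $x' f = f' x$; since pushouts of monics are monic in an abelian category (as already used in this report), $x'$ is monic, hence $M(x')$ is injective because $M$ preserves monics. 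Naturality of $\iota$ now gives
\[ \iota_P\bigl(M(x')(m)\bigr) = E(x')\bigl(\iota_B(m)\bigr) = E(x'f)(n) = E(f'x)(n) = E(f')\bigl(E(x)(n)\bigr) = 0, \]
so $M(x')(m) = 0$ (as $\iota_P$ is injective), whence $m = 0$, contradicting $m \neq 0$. This forces $E(x)$ to be injective for every monic $x$, i.e.\ $E$ is mono.

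I expect the only genuine subtlety to be the construction of $\langle n\rangle$: one must verify it is an honest subfunctor, which is precisely where additivity of the objects of $[\mathcal{A}, Ab]$ is indispensable, and one must set up the pushout square carefully so that naturality of $\iota$ converts \textquotedblleft $n$ dies under $E(x)$\textquotedblright{} into \textquotedblleft $m$ dies under $M(x')$\textquotedblright. Everything else reduces to routine manipulation of pointwise kernels, images and intersections.
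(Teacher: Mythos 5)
Your proposal is correct and follows essentially the same route as the paper's proof: the same subfunctor of $E$ generated by the offending element, the same appeal to essentiality via pointwise intersections, and the same pushout-plus-naturality argument to reach a contradiction with $M$ being mono. The only cosmetic difference is that you justify additivity of the generated subfunctor directly from $E$ being an object of $[\mathcal{A},Ab]$, whereas the paper cites its earlier results; both are valid.
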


\begin{proof}
Suppose $E$ is not mono, so there is a monic $A' \rightarrow A$ in $\mathcal{A}$ such that $EA' \rightarrow EA$ is not monic in $Ab$. There is $0 \neq x \in EA'$ with $(EA' \rightarrow EA)(x) =0$; we construct the subfunctor $F \subset E$ \textit{generated by} $x$ as follows.
\\ Define it on objects as $F(B) = \{ y \in EB: \text{ there exists } A' \rightarrow B \text{ in } \mathcal{A} \text{ such that } (EA' \rightarrow EB)(x) = y\}$, from which it follows that $$(EB' \rightarrow EB)(FB') \subset FB$$
for $B' \rightarrow B$: If $y \in FB'$ then there is $A' \rightarrow B'$ in $\mathcal{A}$ with $(EA' \rightarrow EB')(x)=y$. Then $A' \rightarrow B' \rightarrow B$ witnesses that $(EB' \rightarrow EB)(y) \in FB$.
\\ Hence we may define $F(B' \rightarrow B)$ by restriction:
$$F(B' \rightarrow B) = FB' \rightarrow FB, y \mapsto (EB' \rightarrow EB)(y).$$
(Functoriality is then tautological.)
\\ $F$ is still a set-valued functor. We would like to upgrade it to a functor $\mathcal{A} \rightarrow Ab$, and we do this by observing that $FB$ is a subgroup of $EB$:
\begin{itemize}
    \item $0 \in FB$, since the zero map sends $x$ to it.
    \item if $y,z \in FB$ then there are $f,g: A' \rightarrow B$ with $(Ef)(x)=y, (Eg)(x)=z$.
    \\ Then $(E(f-g))(x)=(Ef-Eg)(x)=(Ef)(x)-(Eg)(x)=y-z$, so $y-z \in FB$,
    \\ where the first equality uses Proposition \ref{7.11} and Corollary \ref{3.12}.
\end{itemize}

\noindent Since $x \in FA' \subset EA'$, we have $F \neq 0$. Since $M \rightarrow E$ is essential, $F \cap M \neq 0$, so there is some $B$ with $FB \cap MB \neq 0$, so there is $0 \neq y \in FB \cap MB$.
\\ Since $y \in FB$, there is $A' \rightarrow B$ with $y=(EA' \rightarrow EB)(x)$. Let
\[ \begin{tikzcd}
A' \arrow{d} \arrow{r} &A \arrow{d}\\
B  \arrow{r} &P
\end{tikzcd} 
\]
be a pushout diagram. Since $A' \rightarrow A$ was monic, so will be $B \rightarrow P$. Hence so too is $MB \rightarrow MP$ (since $M$ was mono), therefore $MB \rightarrow MP \neq 0$.
\\ Therefore, $$0 \neq (EB \rightarrow EP)(y) = (EB \rightarrow EP)(EA' \rightarrow EB)(x) = (EA' \rightarrow EP)(x)= (EA \rightarrow EP)(EA' \rightarrow EA)(x)=0,$$
a contradiction. The first step is because $MB \rightarrow EB \rightarrow EP = MB \rightarrow MP \rightarrow EP$ (a naturality square), and $MP \rightarrow EP$ is monic.
\\
\end{proof}

\noindent Let $\mathcal{M}(\mathcal{A})$ be the full subcategory of $[\mathcal{A}, Ab]$ whose objects are the mono functors.
\\ $\mathcal{M}(\mathcal{A})$ is closed under taking subobjects, products, and essential extensions:
\begin{itemize}
    \item Let $E$ be a subfunctor of $F$ mono, so each component $EA \rightarrow FA$ is monic.
    \\ If $A \rightarrow A''$ is monic in $\mathcal{A}$, then $FA \rightarrow FA''$ is monic in $Ab$. Since $E \rightarrow F$ is a natural transformation, $$EA \rightarrow EA'' \rightarrow FA'' = EA \rightarrow FA \rightarrow FA'', \text{ which is monic.}$$
    \item Closure under products is easy: we prove a similar result in Theorem \ref{7.27}. (There we show the full subcategory of left-exact functors is closed under products.)
    \item We have just proven closure under essential extensions.
    \\
\end{itemize}

\noindent To generalise the situation, let $\mathcal{B}$ be a Grothendieck category with injective extensions, and let $\mathcal{M}$ be a full subcategory closed under taking subobjects, products, and essential extensions. Let us call the objects in $\mathcal{M}$ \textit{mono objects}.
\\ As an example, if $R$ is an integral domain, then $\mathcal{B} = R$-Mod is Grothendieck, and the subcategory $\mathcal{M}$ of torsion-free modules is closed under these three operations.

\begin{prop}
Every object $B \in \mathcal{B}$ has a maximal quotient object $B \rightarrow \mathcal{M} (B)$ in $\mathcal{M}$.
\end{prop}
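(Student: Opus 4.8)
The plan is to realise $\mathcal{M}(B)$ as the quotient of $B$ by the smallest subobject $K \subseteq B$ for which $B/K$ lies in $\mathcal{M}$, and to produce this smallest subobject as an intersection. Since $\mathcal{B}$ is a Grothendieck category it is by definition well-powered and bicomplete, so the collection $\mathcal{K} := \{\, K \subseteq B : B/K \in \mathcal{M} \,\}$ is a genuine set rather than a proper class, and the intersection of any subfamily of $\mathcal{K}$ exists in $\mathcal{B}$. Moreover $\mathcal{K}$ is nonempty: $\mathcal{M}$ contains the zero object (it is the empty product, or equivalently a subobject of any object of $\mathcal{M}$), so the trivial quotient $B \to 0$ exhibits $K = B$ as an element of $\mathcal{K}$.

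Next I would set $K_0 := \bigcap_{K \in \mathcal{K}} K$ and argue that $B/K_0 \in \mathcal{M}$. The key observation is that $K_0$ is exactly the kernel of the canonical map $\varphi : B \to \prod_{K \in \mathcal{K}} B/K$: for each $K$, the composite $K_0 \to B \to B/K$ factors as $K_0 \to K \to B \to B/K = 0$, so $K_0 \subseteq \mathrm{Ker}(\varphi)$; conversely $\mathrm{Ker}(\varphi) \to B \to B/K = 0$ forces $\mathrm{Ker}(\varphi) \subseteq K$ for every $K \in \mathcal{K}$, hence $\mathrm{Ker}(\varphi) \subseteq K_0$. Therefore $B/K_0$ is the coimage of $\varphi$, and $\mathrm{Coim}(\varphi) \to \prod_K B/K$ is monic. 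Since $\mathcal{M}$ is closed under products, $\prod_{K} B/K \in \mathcal{M}$; since $\mathcal{M}$ is closed under subobjects, $B/K_0 \in \mathcal{M}$. Hence $K_0 \in \mathcal{K}$, and by construction it is the least element of $\mathcal{K}$.

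Finally I would verify that the canonical epic $B \to \mathcal{M}(B) := B/K_0$ really is a maximal quotient object in $\mathcal{M}$. Given any quotient object represented by an epic $B \xrightarrow{e} Q$ with $Q \in \mathcal{M}$, write $K = \mathrm{Ker}(e)$; since $e$ is epic the factorisation $B \to \mathrm{Coim}(e) \to Q$ has both maps epic and the second also monic, so $Q \cong \mathrm{Coim}(e) = B/K$ as quotient objects, whence $K \in \mathcal{K}$ and $K \supseteq K_0$. As $Ker$ and $Cok$ are order-reversing (Theorem \ref{2.11}), $B \to B/K$ is a smaller quotient object than $B \to B/K_0$; concretely $B \to Q$ factors through $B \to \mathcal{M}(B)$. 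Thus every quotient of $B$ lying in $\mathcal{M}$ is dominated by $B \to \mathcal{M}(B)$, which is precisely what it means for $B \to \mathcal{M}(B)$ to be a maximal quotient object in $\mathcal{M}$.

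The only genuine subtlety is the bookkeeping in the first step — using well-poweredness so that $\mathcal{K}$ is a set and $\bigcap_{K \in \mathcal{K}} K$ is meaningful — together with the routine but essential identification of that intersection with $\mathrm{Ker}(\varphi)$; everything afterwards is formal. I do not expect to need closure of $\mathcal{M}$ under essential extensions, nor the existence of injective extensions in $\mathcal{B}$, for this particular proposition.
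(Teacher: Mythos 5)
Your proposal is correct and takes essentially the same route as the paper: both realise $\mathcal{M}(B)$ as the coimage of the canonical map from $B$ into the product of all its mono quotients, conclude it lies in $\mathcal{M}$ by closure under products and subobjects, and verify maximality via the projection onto the relevant factor. Your intersection-of-kernels framing is just a repackaging of that same construction, with some extra (harmless) bookkeeping identifying $\bigcap K$ with $\mathrm{Ker}(\varphi)$.
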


\begin{proof}
Let $\mathcal{F}$ be the set of mono quotients of $\mathcal{B}$. Define $M(B)$ as the coimage of $B \xrightarrow{h} \Pi_{B' \in \mathcal{F}} B'$, where each component of $h$ is the obvious epic.
\\ Since $\mathcal{M}$ is closed under products and subobjects, and a coimage is a subobject of its
codomain, $M (B) \in \mathcal{M}$. By definition, the coimage is a quotient object of $B$, so it remains to see that it is maximal in $\mathcal{M}$.
\\ Given an epic $B \rightarrow B''$ with $B'' \in \mathcal{M}$, there is a map $M(B) \rightarrow B''$ such that

 \[
  \begin{tikzcd}[row sep=0.4em,column sep=2em]
    & M(B) \arrow{dd} \\
    B \arrow[ur] \arrow[dr] && \text{commutes.}\\
    & B'' \\
  \end{tikzcd}
 \]
Indeed, since $B''$ appears as a factor in the product $\Pi B'$, we have a projection map $\Pi B_i \xrightarrow{\pi} B''$. Let us take $$M(B) \rightarrow B'' = M(B) \rightarrow \Pi B' \xrightarrow{\pi} B''.$$
Then, as desired, $$B \rightarrow M(B) \rightarrow B'' = B \rightarrow M(B) \rightarrow \Pi B' \xrightarrow{\pi} B'' = B \rightarrow \Pi B' \xrightarrow{\pi} B'' = B \rightarrow B''.$$
\end{proof}

\begin{prop} \label{7.22}
Let $B \rightarrow M$ be any map, where $B \in \mathcal{B}, M \in \mathcal{M}$.
\\ There is a unique map $M(B) \rightarrow M$ such that
 \[
  \begin{tikzcd}[row sep=0.4em,column sep=2em]
    & M(B) \arrow{dd} \\
    B \arrow[ur] \arrow[dr] && \text{commutes.}\\
    & M \\
  \end{tikzcd}
 \]
 (This says that $B \rightarrow M(B)$ is a \textbf{reflection} of $B$ in $\mathcal{M}$.)
\end{prop}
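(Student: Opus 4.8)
The plan is to deduce this from the maximality property of $M(B)$ established in the previous proposition, which handles only \emph{epic} maps $B \rightarrow B''$ with $B'' \in \mathcal{M}$; the extra work is to cope with a map $B \rightarrow M$ that need not be epic.

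First I would record that $B \rightarrow M(B)$, being a coimage, is epic. This already settles uniqueness: any two maps $M(B) \rightarrow M$ that make the triangle commute agree after precomposition with the epic $B \rightarrow M(B)$, hence are equal. So only existence requires argument.

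For existence, factor the given map through its image: write $B \rightarrow M$ as $B \rightarrow I \xrightarrow{i} M$ with $I = Im(B \rightarrow M)$, so that $B \rightarrow I$ is epic and $i$ is monic. Since $\mathcal{M}$ is closed under subobjects, the monic $i$ witnesses $I \in \mathcal{M}$. Now $B \rightarrow I$ is an epic onto an object of $\mathcal{M}$, so the previous proposition supplies a map $M(B) \rightarrow I$ with $B \rightarrow M(B) \rightarrow I = B \rightarrow I$. Taking $M(B) \rightarrow M$ to be the composite $M(B) \rightarrow I \xrightarrow{i} M$ gives
$$B \rightarrow M(B) \rightarrow I \xrightarrow{i} M = B \rightarrow I \xrightarrow{i} M = B \rightarrow M,$$
which is the required factorization.

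There is no genuine obstacle here; the only point worth flagging is that, since $B \rightarrow M$ need not be epic, the previous proposition cannot be applied to it directly, and the image factorization (together with closure of $\mathcal{M}$ under subobjects) is exactly what repairs this. Combined with the previous proposition, this shows that $\mathcal{M}$ is a reflective subcategory of $\mathcal{B}$ with reflector $M(-)$.
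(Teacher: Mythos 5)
Your proof is correct and follows essentially the same route as the paper: both factor $B \rightarrow M$ through its (co)image, use closure of $\mathcal{M}$ under subobjects to see that the image lies in $\mathcal{M}$, invoke the maximality of $M(B)$ among mono quotients to obtain the factorization, and deduce uniqueness from the fact that $B \rightarrow M(B)$ is epic.
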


\begin{proof}
Let $B \xrightarrow{c} B''$ be the coimage of $B \xrightarrow{b} M$, so $b$ factors as $B \xrightarrow{c} B'' \xrightarrow{d} M$.
\\ $B''$ is mono since $\mathcal{M}$ is closed under subobjects. By maximality of $M(B)$ among mono quotients, $B \xrightarrow{c} B''$ factors as $B \xrightarrow{q} M(B) \xrightarrow{u} B''$.
\\ Let us define $M(B) \xrightarrow{x} M$ as $du$.
\\ We clearly have $b = dc = duq = xq$.
\\ $x$ is the unique such map, because $q$ is epic: if also $b = x'q$, then $x'q = b = xq$ implies $x=x'$.
\\
\end{proof}

\noindent Given any $B' \rightarrow B$ in $\mathcal{B}$, we get a unique map $M(B') \rightarrow M(B)$ such that

\[ \begin{tikzcd}
B' \arrow{r} \arrow[swap]{d} & M(B') \arrow{d} \\
B \arrow{r} & M(B) & \text{commutes,}
\end{tikzcd} 
\]

\noindent by taking $B' \rightarrow M(B)$ to be the composite $B' \rightarrow B \rightarrow M(B)$ in 
 \[
  \begin{tikzcd}[row sep=0.3em,column sep=2em]
    & M(B') \arrow{dd}{\exists !} \\
    B' \arrow[ur] \arrow{dr}\\
    & M(B) \ . \\
  \end{tikzcd}
 \]
 
\noindent The uniqueness forces $M$ to be an additive functor $\mathcal{B} \rightarrow \mathcal{M}$.
\begin{itemize}
    \item $M(1_B) = 1_M(B)$, because $1_{M(B)}$ makes the diagram
\[ \begin{tikzcd}
B \arrow{r} \arrow[swap]{d}{1_B} & M(B) \arrow{d}{1_{M(B)}} \\
B \arrow{r} & M(B) & \text{commute.}
\end{tikzcd} 
\]
    \item $M(B' \xrightarrow{f} B \xrightarrow{g} B'') = M(g) \circ M(f)$, because $M(g) \circ M(f)$ makes the diagram
\[ \begin{tikzcd}
B' \arrow{r} \arrow[swap]{d}{g \circ f} & M(B') \arrow{d}{M(g) \circ M(f)} \\
B'' \arrow{r} & M(B'') & \text{commute. (The diagram expands as two commutative squares, one above the other.)}
\end{tikzcd} 
\]
    \item $M$ is additive -- each square in
\[ \begin{tikzcd}
B' \arrow{r}{q'} \arrow[swap]{d}{\Delta = \langle 1,1 \rangle } & M(B') \arrow{d}{\Delta = \langle 1,1 \rangle } \\
B' \oplus B' \quad  \arrow{r} \arrow[swap]{d}{[ f, g ]} &\quad M(B') \oplus M(B') \arrow{d}{[ Mf, Mg ] } \\
B \arrow{r}{q} & M(B') & \text{commutes, so the ``outer'' square commutes:}
\end{tikzcd} 
\]
$$[ Mf, Mg ] \circ [i_1 \circ q', i_2 \circ q'] = [[Mf, Mg] \circ i_1 \circ q', [ Mf, Mg ] \circ i_2 \circ q']  = [Mf \circ q', Mg \circ q'] = [q \circ f, q \circ g] = q \circ [f,g].$$
$$\langle q' \circ \pi_1, q' \circ \pi_2 \rangle \circ \langle 1,1 \rangle = \langle q' \circ \pi_1 \circ \langle 1,1 \rangle \, q' \circ \pi_2 \circ \langle 1,1 \rangle \rangle = \langle q' \circ 1,q' \circ 1 \rangle =  \langle 1 \circ q',1 \circ q' \rangle = \langle 1,1 \rangle q'.$$
\end{itemize}

\noindent (We have shown that $\mathcal{M}$ is a \textbf{reflective} subcategory of $\mathcal{B}$, and the functor $M: \mathcal{B} \rightarrow \mathcal{M}$ is a \textbf{reflector}.)
\\
\\

\noindent Let us call $T \in \mathcal{B}$ a \textbf{torsion} object if $Hom(T,N) = 0$ for each $N \in \mathcal{M}$.

\begin{prop}
$T$ is torsion iff $M(T) = 0$.
\end{prop}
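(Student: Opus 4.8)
This is a direct consequence of the reflection property established in Proposition \ref{7.22}, together with the fact that the canonical map $B \to M(B)$ is epic (it represents a quotient object, being the coimage of $B \to \Pi_{B' \in \mathcal{F}} B'$, and $B \to Coim$ is always epic).

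For the direction $M(T) = 0 \implies T$ torsion: take any $N \in \mathcal{M}$ and any map $T \to N$. By Proposition \ref{7.22} this factors through the reflection as $T \to M(T) \to N$. Since $M(T) = 0$, the map $T \to N$ factors through the zero object and is therefore zero. As $N$ was arbitrary in $\mathcal{M}$, $\mathrm{Hom}(T,N) = 0$ for all such $N$, i.e.\ $T$ is torsion.

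For the converse $T$ torsion $\implies M(T) = 0$: since $M(T) \in \mathcal{M}$, the hypothesis gives $\mathrm{Hom}(T, M(T)) = 0$. In particular the canonical map $T \to M(T)$ is the zero map. But this map is epic, and an epic which equals $0$ forces its codomain to vanish (if $e \colon T \to Q$ is epic and $e = 0$, then $1_Q \circ e = 0 = 0_Q \circ e$, so $1_Q = 0_Q$, whence $Q = 0$). Therefore $M(T) = 0$.

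I do not anticipate a genuine obstacle here; the only point requiring care is invoking that $T \to M(T)$ is epic so that the "zero epic" argument applies. Everything else is a formal unwinding of the universal property of the reflector.
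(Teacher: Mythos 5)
Your proposal is correct and follows essentially the same route as the paper: the forward direction factors an arbitrary map $T \to N$ through $M(T) = 0$ via the reflection property of Proposition \ref{7.22}, and the converse uses that the canonical epic $T \to M(T)$ must be zero, forcing $M(T) = 0$. The only cosmetic difference is that you cancel the epic against $1_{M(T)} = 0_{M(T)}$ directly, whereas the paper phrases the same observation as $M(T)$ being initial.
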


\begin{proof}
Suppose $M(T) = 0$. Let $T \xrightarrow{k} N$ be any map, where $N \in \mathcal{M}$. By Proposition \ref{7.22} this map factors as $k = T \rightarrow 0 \rightarrow N = 0$.
\\

\noindent In the other direction, $Hom(T, M(T)) = 0$ means the obvious epic $T \rightarrow M(T)$ is zero, so $M(T) =  0$.
\\ (For instance, any map out of $M(T)$ must be zero, since $T \xrightarrow{0} M(T)$ is epic. This shows $M(T)$ is initial.)
\\
\end{proof}

\begin{prop} \label{7.24}
$Ker(B \rightarrow M(B))$ is the maximal torsion subobject of $B$.
\end{prop}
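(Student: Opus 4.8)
The plan is to prove two things: that $T_0 := Ker(B \xrightarrow{\,\eta_B\,} M(B))$ is a torsion object, and that every torsion subobject of $B$ is contained in $T_0$; together these identify $T_0$ as the largest element of the lattice of torsion subobjects of $B$. (Here I write $\eta_B$ for the reflection map $B \to M(B)$.)

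Maximality is the easy direction. Given a monic $T \rightarrow B$ with $T$ torsion, the composite $T \rightarrow B \xrightarrow{\eta_B} M(B)$ is a morphism out of a torsion object into $M(B) \in \mathcal{M}$, hence zero by the definition of torsion. Therefore $T \rightarrow B$ factors through $Ker(\eta_B) = T_0$, which says precisely that $T \subseteq T_0$ as subobjects of $B$.

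For the main point I must show $Hom(T_0, N) = 0$ for every $N \in \mathcal{M}$. Fix $N$ and a map $f : T_0 \rightarrow N$, and write $\iota : T_0 \rightarrow B$ for the inclusion; note $\eta_B \iota = 0$ by definition of $T_0$, and recall $\eta_B$ is epic since it is a coimage projection. Choose an injective envelope $N \rightarrow E$ in $\mathcal{B}$; as $N \rightarrow E$ is an essential extension and $\mathcal{M}$ is closed under essential extensions, $E \in \mathcal{M}$. Since $E$ is injective and $\iota$ is monic, the map $T_0 \xrightarrow{f} N \rightarrow E$ extends along $\iota$ to some $g : B \rightarrow E$. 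Now $E \in \mathcal{M}$, so by Proposition \ref{7.22} the reflection property gives $g = \bar g \circ \eta_B$ for a unique $\bar g : M(B) \rightarrow E$. Then $(N \rightarrow E) \circ f = g \circ \iota = \bar g \circ \eta_B \circ \iota = 0$, and since $N \rightarrow E$ is monic we conclude $f = 0$. Hence $T_0$ is torsion, and combined with the previous paragraph this proves the proposition. (Equivalently, one may phrase this as: $T_0$ admits no nonzero morphism to any mono object, so $M(T_0) = 0$, so $T_0$ is torsion by the preceding proposition.)

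I expect the only delicate step to be the use of an injective envelope $N \hookrightarrow E$ that still lies in $\mathcal{M}$: this is exactly where closure of $\mathcal{M}$ under essential extensions (not merely under subobjects) is needed, and it is what makes the ``embed in an injective, then reflect'' trick go through. Everything else is a formal consequence of the universal property of the reflection $\eta_B$, together with the observations that $\eta_B$ is epic and that a morphism from a torsion object into an object of $\mathcal{M}$ vanishes.
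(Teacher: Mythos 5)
Your proof is correct and follows essentially the same route as the paper: maximality via the vanishing of any map from a torsion object into $M(B) \in \mathcal{M}$, and torsion-ness of the kernel by extending $T_0 \to N \to E$ along $T_0 \to B$ into an injective envelope $E \in \mathcal{M}$ and then factoring through the reflection $B \to M(B)$. The remark that $\eta_B$ is epic is true but not actually needed anywhere in your argument.
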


\begin{proof}
For any torsion object $T$ and map $T \rightarrow B$, the image of $T \rightarrow B$ is contained in $Ker(B \rightarrow M(B))$.
($T \rightarrow B \rightarrow M(B) = 0$, because Proposition \ref{7.22} says this map factors as $T \rightarrow M(T) \rightarrow M(B)$, and $M(T) = 0$.)
\\ Hence, as soon as we that $K=Ker(B \rightarrow M(B))$ is torsion, we are done: it is maximal as such.
\\

\noindent Let $B'' \in \mathcal{M}$. We show that any map $K \rightarrow B''$ is zero.
\\ We have an exact sequence $0 \rightarrow K \rightarrow B \rightarrow M(B) \rightarrow 0$.
\\ Form an injective envelope $B'' \rightarrow E$. $\mathcal{M}$ is closed under this operation, so $E \in \mathcal{M}$.
\\ Since $K \rightarrow B$ is monic and $E$ is injective, $K \rightarrow B'' \rightarrow E$ extends to a map $B \rightarrow E$.
\\ We obtain the following commutative diagram:

\[ \begin{tikzcd}
0 \arrow{r} &K \arrow{d} \arrow{r} &B \arrow{d} \arrow{r} &M(B) \arrow{dl} \arrow{r} &0 \\
& B'' \arrow{r} &E 
\end{tikzcd} 
\]
where $M(B) \rightarrow E$ is a map as in Proposition \ref{7.22}.
\\By commutativity of the diagram and exactness of the upper row, we have $$K \rightarrow B'' \rightarrow E = K \rightarrow B \rightarrow M(B) \rightarrow E = 0,$$
so $K \rightarrow B''$ is also zero since $ B'' \rightarrow E$ is monic.
\\
\end{proof}

\noindent In general, although $\mathcal{B}$ was abelian, $\mathcal{M}$ need not be: not every monic in $\mathbb{M}$ is realised as a kernel of a map in $\mathcal{M}$. For instance, in the situation where $R = \mathbb{Z}$, $\mathcal{B} = Ab$ and $\mathcal{M}$ is the subcategory of torsion-free abelian groups, $\mathcal{M}$ is not abelian because the monic $\mathbb{Z} \xrightarrow{2} \mathbb{Z}$ is not a kernel.
\\ (If it was a kernel of some $\mathbb{Z} \xrightarrow{f} B$, then $2f(1)=f(2)=0$, which forces $f(1)=0$ since $B$ is torsion free. So $f=0$. Then $\mathbb{Z} \xrightarrow{1} \mathbb{Z}$ would also factor through the kernel of $f$, which implies that $1$ is even: a contradiction.)
\\

\noindent What we \textit{can} do is go one level deeper to define a full subcategory $\mathcal{L}$ of $\mathcal{M}$ that will turn out to be abelian. In the case when our Grothendieck category is $[\mathcal{A}, Ab]$, $\mathcal{L}$ will be our key to proving the Mitchell embedding theorem.
\\

\noindent Let us call a subobject $M' \subset M \in \mathcal{M}$ to be \textbf{pure} if $M/M' \in \mathcal{M}$, where $M \rightarrow M/M'$ is the cokernel  of $M' \rightarrow M$. Let us call a mono object \textbf{absolutely pure} if whenever it appears as a subobject of a mono object, it is a pure subobject.
\\ Define $\mathcal{L}$ to be the full subcategory of absolutely pure objects.

\begin{lem} \label{inj abspure}
All injective mono objects are absolutely pure.
\end{lem}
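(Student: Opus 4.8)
The plan is to use the earlier characterisation of injectivity — an object is injective iff it has only trivial extensions — together with the fact that a split monic exhibits its codomain as a direct sum of its domain and its cokernel, and then invoke closure of $\mathcal{M}$ under subobjects.

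First I would let $E$ be an injective mono object and suppose $E \xrightarrow{x} M$ is a monic with $M \in \mathcal{M}$; the goal is to show $M/E \in \mathcal{M}$, i.e.\ that $E$ is a pure subobject of $M$. Since $\mathcal{B}$ is Grothendieck, hence abelian, the proposition characterising injectives applies: the extension $x$ must be trivial, so $x$ is a split monic and, as recorded just after the definition of trivial extension, $M \cong E \oplus C$ with $x$ corresponding to the inclusion $\iota_1$ and $C$ a representative of the cokernel of $x$. In particular $C \cong M/E$ as a quotient object of $M$.

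Next I would observe that $C$ is a subobject of $M$: the inclusion $\iota_2 : C \to E \oplus C \cong M$ is monic. Since $\mathcal{M}$ is closed under subobjects and $M \in \mathcal{M}$, we conclude $C \in \mathcal{M}$, and hence $M/E \cong C \in \mathcal{M}$. This is precisely the assertion that $E$ is a pure subobject of $M$. Since $M$ was an arbitrary mono object containing $E$, this shows $E$ is absolutely pure.

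I do not expect a genuine obstacle here; the entire content is in the two inputs cited above. The only points requiring a little care are that the proposition on trivial extensions was stated for abelian categories (which covers $\mathcal{B}$, being Grothendieck), and that the cokernel of the split monic $E \to E \oplus C$ really does name the subobject/quotient $C$, so that $M/E$ and $C$ agree as quotient objects of $M$.
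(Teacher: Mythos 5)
Your argument is correct and is essentially identical to the paper's proof: both split the extension $E \to M$ using injectivity, identify $M$ with $E \oplus M/E$, and then use closure of $\mathcal{M}$ under subobjects to conclude $M/E \in \mathcal{M}$. No further comment is needed.
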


\begin{proof}
Let $E \in \mathcal{M}$ be injective. Let $E \rightarrow F$ be monic, where $F \in \mathcal{M}$. We must show that $F/E \in \mathcal{M}$. Well, the extension $E \rightarrow F$ must be split, so $F$ is the direct sum of $E$ and $F/E$. In particular $F/E$ is a subobject of $F \in \mathcal{M}$, so $F/E \in \mathcal{M}$.
\\
\end{proof}

\begin{lem}
If $0 \rightarrow M_1 \rightarrow B \rightarrow M_2 \rightarrow 0$ is exact in $\mathcal{B}$ and $M_1, M_2 \in \mathcal{M}$, then $B \in \mathcal{M}$.
\end{lem}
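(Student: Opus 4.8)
The plan is to exhibit $B$ as a subobject of an object that is \emph{already} known to lie in $\mathcal{M}$; since $\mathcal{M}$ is closed under subobjects, this is enough. The only delicate point is choosing that ambient object inside $\mathcal{M}$, and the device for this is to use the injective \emph{envelope} of $M_1$ rather than an arbitrary injective extension.

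First I would form an injective envelope $M_1 \rightarrow E_1$ of $M_1$ in $\mathcal{B}$ (available as in the previous section, exactly as in the proof of Proposition \ref{7.24}). Since $M_1 \in \mathcal{M}$ and $\mathcal{M}$ is closed under essential extensions, $E_1 \in \mathcal{M}$; and $E_1$ is injective by construction. As $M_1 \rightarrow B$ is a monic and $E_1$ is injective, the map $M_1 \rightarrow E_1$ extends to some $e \colon B \rightarrow E_1$ whose restriction along $M_1 \rightarrow B$ is the original monic $M_1 \rightarrow E_1$.

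Next, writing $p \colon B \rightarrow M_2$ for the given epic (so that $M_1 \rightarrow B$ is a kernel of $p$), I would form the map $\varphi = \langle e, p \rangle \colon B \rightarrow E_1 \oplus M_2$, characterised by $\pi_1 \varphi = e$ and $\pi_2 \varphi = p$. I claim $\varphi$ is monic. Let $K \rightarrow B$ be a kernel of $\varphi$. Then $K \rightarrow B \xrightarrow{p} M_2 = \pi_2 \varphi \circ (K \rightarrow B) = 0$, so $K \rightarrow B$ factors through $Ker(p) = (M_1 \rightarrow B)$, say as $K \rightarrow M_1 \rightarrow B$. Composing with $e$ and using that $e$ restricts on $M_1$ to the monic $M_1 \rightarrow E_1$, we get $K \rightarrow M_1 \rightarrow E_1 = K \rightarrow B \xrightarrow{e} E_1 = \pi_1 \varphi \circ (K \rightarrow B) = 0$; since $M_1 \rightarrow E_1$ is monic, $K \rightarrow M_1 = 0$, hence $K \rightarrow B = 0$, i.e.\ $K = 0$.

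Finally, $E_1 \oplus M_2$ is a binary product of objects of $\mathcal{M}$, hence lies in $\mathcal{M}$ by closure under products; and $B$ is a subobject of it via the monic $\varphi$, so closure of $\mathcal{M}$ under subobjects yields $B \in \mathcal{M}$. There is no substantial obstacle here once one sees to use the injective envelope of $M_1$ (so that the target of $\varphi$ stays inside $\mathcal{M}$); the remainder is the routine kernel chase above.
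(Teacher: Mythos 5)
Your proof is correct and follows essentially the same route as the paper: take an injective envelope $M_1 \rightarrow E_1$ (which lies in $\mathcal{M}$ by closure under essential extensions), extend it along $M_1 \rightarrow B$ to a map $B \rightarrow E_1$, and show the induced map $B \rightarrow E_1 \oplus M_2$ is monic so that closure under products and subobjects finishes the argument. The only cosmetic difference is that you verify monicity via the kernel of $\varphi$ while the paper checks that $mf = mg$ forces $f = g$; these are equivalent.
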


\begin{proof}
Let $M_1 \rightarrow E$ be an injective envelope.
\\ We have $E \in \mathcal{M}$, hence $E \oplus M_2 \in \mathcal{M}$.
\\ Since $E$ is injective, $B \rightarrow E$ extends to a map $M_1 \rightarrow E$. Once we see that the obvious map $B \xrightarrow{m} E \oplus M_2$ is monic, we will be done.
\\ Suppose $f,g$ are maps $A \rightarrow B$ with $mf=mg$. It is enough to see that $d:=f-g = 0$.
\\ Since $md=0$, $A \xrightarrow{d} B \rightarrow M_2 =A \xrightarrow{d} B \rightarrow E = 0$. In particular, $d$ factors through the kernel of $B \rightarrow M_2$ as $d = A \rightarrow M_1 \rightarrow B$. Now
$$A \rightarrow M_1 \rightarrow E = A \rightarrow M_1 \rightarrow B \rightarrow E = A \xrightarrow{d} B \rightarrow E = 0,$$
but $M_1 \rightarrow E$ is monic, so $A \rightarrow M_1 = 0$, and finally $d = A \rightarrow M_1 \rightarrow B= 0$.
\\
\end{proof}

\begin{lem} \label{7.26}
A pure subobject of an absolutely pure object is absolutely pure.
\end{lem}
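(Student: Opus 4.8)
The plan is to reduce the statement to the preceding lemma (an extension of a mono object by a mono object is again a mono object) by means of a pushout. Write $L$ for the given absolutely pure object and $M' \subset L$ for the given pure subobject, so that $L/M' \in \mathcal{M}$ and, since $\mathcal{M}$ is closed under subobjects, also $M' \in \mathcal{M}$; hence it is meaningful to ask whether $M'$ is absolutely pure. To verify this I fix an arbitrary monic $M' \rightarrow N$ with $N \in \mathcal{M}$ and must show that $N/M' := Cok(M' \rightarrow N)$ lies in $\mathcal{M}$.

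First I would form in $\mathcal{B}$ the pushout $P$ of the diagram $L \leftarrow M' \rightarrow N$ (which exists, as $\mathcal{B}$ is abelian), obtaining a square with $M' \rightarrow N$, $M' \rightarrow L$ on one side and $L \rightarrow P$, $N \rightarrow P$ on the other. Since the pushout of a monic is monic, both $L \rightarrow P$ and $N \rightarrow P$ are monic; and since a pushout square preserves cokernels along parallel sides, $Cok(L \rightarrow P) \cong Cok(M' \rightarrow N) = N/M'$ while $Cok(N \rightarrow P) \cong Cok(M' \rightarrow L) = L/M'$. In other words $P$ is simultaneously an extension of $N/M'$ by $L$ and an extension of $L/M'$ by $N$.

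Next I would feed the short exact sequence $0 \rightarrow N \rightarrow P \rightarrow L/M' \rightarrow 0$ into the preceding lemma: its outer terms $N$ and $L/M'$ are mono objects, hence $P \in \mathcal{M}$. Now $L$ appears as a subobject of the mono object $P$ via the monic $L \rightarrow P$, and $L$ is absolutely pure by hypothesis, so this subobject is pure, i.e. $P/L = Cok(L \rightarrow P) \in \mathcal{M}$. But we identified $P/L$ with $N/M'$, so $N/M' \in \mathcal{M}$, which is exactly the assertion that $M' \subset N$ is a pure subobject. As $N$ ranged over all mono objects containing $M'$, this shows $M'$ is absolutely pure.

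The only genuinely non-mechanical step is the decision to introduce the pushout $P$ at all; everything afterwards is bookkeeping, the one point deserving a line of care being the two cokernel identifications $Cok(L \rightarrow P) \cong N/M'$ and $Cok(N \rightarrow P) \cong L/M'$ extracted from the pushout square. The remainder follows immediately from the preceding lemma, the closure properties of $\mathcal{M}$, and the definition of \emph{absolutely pure}.
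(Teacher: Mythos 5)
Your proof is correct and follows essentially the same route as the paper's: form the pushout of $L \leftarrow M' \rightarrow N$, identify the cokernels of the parallel sides, apply the preceding extension lemma to conclude the pushout is mono, and then invoke absolute purity of $L$ inside the pushout. Only the notation differs.
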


\begin{proof}
Let $A$ be absolutely pure, $P \rightarrow A$ a pure subobject, and $P \rightarrow M$ a monic, where $M \in \mathcal{M}$. We must show that $M/P \in \mathcal{M}$.
\\ Make a pushout diagram
\[ \begin{tikzcd}[row sep=0.8em,column sep=1em]
P \arrow{r} \arrow[swap]{d} & A \arrow{d} \\
M \arrow{r} & R
\end{tikzcd}
\]
and extend it to an exact commutative diagram
\[ \begin{tikzcd}[row sep=0.8em,column sep=1em]
 &0 \arrow{d} &0 \arrow{d} &0 \arrow{d}\\
0 \arrow{r} & P \arrow{r} \arrow[swap]{d} & A \arrow{r} \arrow{d} &A/P \arrow{d} \arrow{r} & 0 \\
0 \arrow{r} & M \arrow{r} \arrow[swap]{d} & R \arrow{r} \arrow{d} &R/M \arrow{d} \arrow{r} & 0 \\
0 \arrow{r} & M/P \arrow{d} \arrow{r} & R/A \arrow{d} \arrow{r} & 0 \\
&0 &0 \\
\end{tikzcd}
\]
by noting that $M/P \rightarrow R/A$ and $A/P \rightarrow R/M$ are both isomorphisms.
\\
\noindent Since $M$ and $R/M \cong A/P$ are mono, $R$ is mono. Hence $R/A$ is mono, hence $M/P$ is mono, as required.
\\
\end{proof}

\begin{thm} \label{7.27}
A mono functor $M \in [\mathcal{A}, Ab]$ is absolutely pure iff it is left-exact.
\end{thm}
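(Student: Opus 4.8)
The plan is to prove the two implications separately; throughout I would use freely that a sequence in $[\mathcal{A},Ab]$ is exact iff it is exact at every object $B \in \mathcal{A}$, that a natural transformation is monic iff each of its components is, and that cokernels — hence the quotients $F/M$ — are computed componentwise. First I would treat \emph{left-exact $\Rightarrow$ absolutely pure}: if $M$ is left-exact then it is additive (Corollary \ref{3.12}) and preserves monics, so $M \in \mathcal{M}$. Given $M \hookrightarrow F$ with $F \in \mathcal{M}$, fix a monic $u \colon A' \to A$ in $\mathcal{A}$ with cokernel $v \colon A \to A''$, so that $0 \to A' \to A \to A''$ is exact (Theorem \ref{2.11}); to see $(F/M)(u)$ is monic, take $c \in (F/M)A'$ with $(F/M)(u)(c) = 0$, lift $c$ to $b \in FA'$, and set $a = F(u)(b) \in FA$. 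Naturality of $F \to F/M$ shows $a$ maps to $0$ in $(F/M)A$, hence $a \in MA$; then naturality of $M \to F$ along $v$, together with $F(vu)=0$ and monicity of $MA'' \to FA''$, forces $M(v)(a) = 0$; left-exactness of $M$ gives $a = M(u)(a')$ with $a' \in MA'$; and monicity of $F(u)$ gives $b = a' \in MA'$, so $c = 0$. Hence $F/M \in \mathcal{M}$ for every such $F$, i.e. $M$ is absolutely pure.

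For the converse, \emph{absolutely pure $\Rightarrow$ left-exact}, the idea is to sandwich $M$ inside a functor that is already left-exact. Since $[\mathcal{A},Ab]$ is Grothendieck with a generator $\Sigma_\mathcal{A} H^A$ (Theorem \ref{5.35}), $M$ has an injective envelope $M \to E$; by Lemma \ref{7.12} (an essential extension of a mono functor is mono) $E \in \mathcal{M}$, and being injective $E$ is right-exact (Proposition \ref{7.11}), hence exact, hence in particular left-exact. Absolute purity of $M$ applied to $M \hookrightarrow E$ with $E \in \mathcal{M}$ then gives $E/M \in \mathcal{M}$. Now, given exact $0 \to A' \xrightarrow{u} A \xrightarrow{v} A''$ in $\mathcal{A}$, since $M$ preserves the monic $u$ and $M(v)M(u) = 0$, the only thing left to check is $\ker(MA \to MA'') \subseteq \operatorname{im}(MA' \to MA)$. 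Given $x$ in this kernel, I would view it via the monic $\iota \colon MA \to EA$: naturality along $v$ kills the image of $x$ in $EA''$, so left-exactness of $E$ gives $\iota(x) = E(u)(y)$ with $y \in EA'$. Because $\iota(x) \in MA$ dies in $(E/M)A$ while $(E/M)(u)$ is monic, $y$ dies in $(E/M)A'$, so $y \in MA'$; then $\iota(M(u)(y)) = \iota(x)$ and monicity of $\iota$ give $x = M(u)(y) \in \operatorname{im}(MA' \to MA)$.

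The one genuinely delicate step — appearing in dual guises on both sides — is the one that forces $M(v)(a) = 0$ in the first argument and $y \in MA'$ in the second; each crucially uses that we are working inside $\mathcal{M}$, i.e. that $M \to F$ (respectively $E/M$) is \emph{componentwise} monic, and the chase collapses without this. The conceptual core of the harder direction is the observation that the injective envelope of a mono object is automatically exact, so that the exactness defect of $M$ is recorded entirely by whether $E/M$ stays in $\mathcal{M}$ — which is exactly what absolute purity guarantees. Everything else is routine diagram chasing with the componentwise short exact sequences $0 \to MB \to FB \to (F/M)B \to 0$ together with the relevant naturality squares.
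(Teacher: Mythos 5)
Your proof is correct, but it takes a somewhat different route from the paper's. The paper isolates a single claim --- a subfunctor $M$ of a \emph{left-exact} functor $E$ is pure iff $M$ is left-exact --- proves it by applying Lemma \ref{2.64} to the pointwise exact-columns diagram, and then runs both implications through the injective envelope $M \to E$ (exact by Proposition \ref{7.11} and Lemma \ref{7.12}, absolutely pure by Lemma \ref{inj abspure}); in particular the direction ``left-exact $\Rightarrow$ absolutely pure'' is obtained indirectly, by first showing $M$ is pure in $E$ and then invoking Lemma \ref{7.26} to promote purity in one absolutely pure ambient to purity in every mono ambient. You instead prove that direction head-on: your element chase shows $F/M$ is mono for an \emph{arbitrary} mono $F$ containing $M$, and it genuinely needs less than Lemma \ref{2.64} does --- you never use exactness of the middle row $0 \to FA' \to FA \to FA''$ at $FA$, only that $F$ is additive and carries $u$ to a monic --- which is exactly why you can dispense with the injective envelope and with Lemma \ref{7.26} in that half. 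For the converse your skeleton coincides with the paper's ($E$ is exact, $E/M$ is mono by absolute purity, chase to extract left-exactness of $M$), with the appeal to Lemma \ref{2.64} replaced by an explicit chase; this is legitimate since kernels, cokernels and exactness in $[\mathcal{A},Ab]$ are all computed pointwise in $Ab$. The paper's route is shorter given its stock of lemmas; yours is more self-contained and makes visible that one of the two directions does not require injective envelopes at all.
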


\begin{proof}
First, we prove a claim.
\begin{itemize}
    \item[\textbf{Claim:}] A subfunctor of a left-exact functor is pure iff it is left-exact.
    \item[\textbf{Proof:}]
    Let $0 \rightarrow M \rightarrow E \rightarrow F \rightarrow 0$ be exact in $[\mathcal{A}, Ab]$, where $E$ is left-exact. We must show $M$ is left-exact.
\\ Let $0 \rightarrow A' \rightarrow A \rightarrow A''$ be exact in $\mathcal{A}$.
We have a commutative diagram

\[ \begin{tikzcd}[row sep=0.8em,column sep=1em]
 &0 \arrow{d} &0 \arrow{d} &0 \arrow{d}\\
0 \arrow{r} & MA' \arrow{r} \arrow[swap]{d} & MA \arrow{r} \arrow{d} &MA'' \arrow{d}  \\
0 \arrow{r} & EA' \arrow{r} \arrow[swap]{d} & EA \arrow{r} \arrow{d} &EA'' \\
0 \arrow{r} & FA' \arrow{d} \arrow{r} & FA \arrow{d} \\
&0 &0 \\
\end{tikzcd}
\]
whose columns are exact since the evaluation functor $[\mathcal{A}, Ab] \rightarrow Ab$ for each of $A',A, \text{ and } A''$ is exact.
\\ The middle row is exact since $E$ is left-exact, so the hypothesis of Lemma \ref{2.64} is satisfied.
\\ Hence, $F$ is mono iff $M$ is left-exact. The claim is proven.
\end{itemize}

\noindent Now, suppose we have a mono functor $M$. Take an injective envelope $M \rightarrow E$. We know $E$ is left-exact (it is injective and mono, hence exact) and absolutely pure (by Lemma \ref{inj abspure}).
\\ If $M$ is absolutely pure, then $M \rightarrow E$ is pure, so the claim implies $M$ is left-exact.
\\ Conversely, if $M$ is left-exact, the claim implies $M \rightarrow E$ is pure, so we finish by Lemma \ref{7.26}.
\end{proof}

\noindent Recall that in the general setting we have a Grothendieck category $\mathcal{B}$, a full subcategory $\mathcal{M}$ of $\mathcal{B}$ closed under taking subobjects, products, and essential extensions, and a full subcategory $\mathcal{L}$ of $\mathcal{M}$ consisting of the absolutely pure objects.
\\

\noindent Given $M \in \mathcal{M}$ and $R \in \mathcal{L}$, a map $M \rightarrow R$ is a  \textbf{reflection} of $M$ in $\mathcal{L}$ if for every map $M \rightarrow L$ where $L \in \mathcal{L}$, there is a unique map $R \rightarrow L$ such that
 \[
  \begin{tikzcd}[row sep=0.4em,column sep=2em]
    & R \arrow{dd} \\
    M \arrow[ur] \arrow[dr] && \text{commutes.}\\
    & L \\
  \end{tikzcd}
 \]

\begin{thm}[Recognition Theorem] \label{7.28}
If the sequence $0 \rightarrow M \rightarrow R \rightarrow T \rightarrow 0$ is exact in $\mathcal{B}$ for $M$ mono, $R$ absolutely pure, and $T$ torsion, then $M \rightarrow R$ is a reflection of $M$ in $\mathcal{L}$.
\end{thm}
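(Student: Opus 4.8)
The plan is to verify the universal property of a reflection directly. Write the given sequence as $0 \to M \xrightarrow{i} R \xrightarrow{p} T \to 0$, so $p$ is a cokernel of $i$. Fix $L \in \mathcal{L}$ and a map $f : M \to L$; I must produce a unique $g : R \to L$ with $gi = f$. Uniqueness is immediate: if $g, g' : R \to L$ both satisfy $gi = f = g'i$, then $(g-g')i = 0$, so $g - g'$ factors through the cokernel $p$ as $R \xrightarrow{p} T \xrightarrow{h} L$; since $T$ is torsion and $L \in \mathcal{M}$, $\mathrm{Hom}(T,L) = 0$, whence $h = 0$ and $g = g'$.

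For existence, the idea is to extend $f$ after enlarging $L$ to an injective object and then cut the extension back down using absolute purity. Choose an injective envelope $L \xrightarrow{\iota} E$ in $\mathcal{B}$. Since $L \in \mathcal{M}$ and $\mathcal{M}$ is closed under essential extensions, $E \in \mathcal{M}$ (in fact $E \in \mathcal{L}$ by Lemma \ref{inj abspure}, though I only need $E \in \mathcal{M}$). Because $L$ is absolutely pure and is a subobject of the mono object $E$, the inclusion $\iota$ is pure, i.e.\ $E/L \in \mathcal{M}$. Now $i : M \to R$ is monic and $E$ is injective, so the composite $M \xrightarrow{f} L \xrightarrow{\iota} E$ extends to some $\bar f : R \to E$ with $\bar f i = \iota f$.

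It remains to show $\bar f$ factors through $\iota$. Let $q : E \to E/L$ be the cokernel of $\iota$. Then $q \bar f i = q \iota f = 0$, so $q\bar f$ factors through $p$ as $R \xrightarrow{p} T \xrightarrow{k} E/L$. But $T$ is torsion and $E/L \in \mathcal{M}$, so $k = 0$ and hence $q\bar f = 0$. Therefore $\bar f$ factors through $\ker q$, and $\ker q = L$ since $\iota$ is monic (Theorem \ref{2.11}). This yields $g : R \to L$ with $\iota g = \bar f$, and then $\iota g i = \bar f i = \iota f$ forces $gi = f$ because $\iota$ is monic, completing the argument.

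The only nontrivial point — the step I expect to be the real content — is the purity step: absolute purity of $L$ is precisely what guarantees $E/L \in \mathcal{M}$, which in turn kills the obstruction $k : T \to E/L$ and lets $\bar f$ descend to $L$. Everything else is formal cokernel-chasing. (In fact the hypotheses ``$M$ mono'' and ``$R$ absolutely pure'' are used only to make the statement meaningful, placing $M \in \mathcal{M}$ and $R \in \mathcal{L}$; existence uses only that $i$ is monic, and uniqueness only that $T$ is torsion and $L \in \mathcal{M}$.)
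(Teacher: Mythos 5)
Your proof is correct and follows essentially the same route as the paper: extend $\iota f$ over the monic $i$ using injectivity of the envelope $E$ of $L$, use absolute purity of $L$ (together with $E\in\mathcal{M}$, via closure under essential extensions) to see $E/L\in\mathcal{M}$, kill the induced map $T\to E/L$ by torsionness, and descend to $L=\ker q$; uniqueness by factoring the difference through $T$. Your closing remark about which hypotheses are actually used is also accurate and matches how the paper deploys them.
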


\begin{proof}
Given $L \in \mathcal{L}$ and $M \xrightarrow{m} L$, let $L\xrightarrow{l} E$ be an injective envelope and $E \xrightarrow{c} F = Cok(L\xrightarrow{l} E)$.
\\ We obtain a commutative diagram with exact rows

\[ \begin{tikzcd}[row sep=1.1em,column sep=1.5em]
0 \arrow{r} &M \arrow{d}{m} \arrow{r}{i} &R \arrow{d}{r} \arrow{r} &T \arrow{d} \arrow{r} &0 \\
0 \arrow{r} &L  \arrow{r}{l} &E \arrow{r}{c} &F \arrow{r} &0 \\
\end{tikzcd} 
\]
as follows:
\\ We already have exact rows, and the vertical map $M \xrightarrow{m} L$. Since $M \xrightarrow{i} R$ is monic and $E$ is injective, $M \rightarrow L \rightarrow E$ extends to a map $R \xrightarrow{r} E$. Finally, we get a map $T \rightarrow F$ because the map $R \xrightarrow{r} E \xrightarrow{c} F$ factors through the cokernel $R \rightarrow T$ of $M \xrightarrow{i} R$. ($cri = clm = 0m = 0$.)
\\

\noindent $E$ is mono by Lemma \ref{7.12}, so $F$ is mono by absolute purity of $L$. Hence $T \rightarrow F = 0$, by definition of $T$ torsion. Then $R \xrightarrow{r} E \xrightarrow{c} F = 0$, so $r$ factors through the kernel $L$ of $c$: there is some $R \xrightarrow{u} L$ with $lu=r$.
\\ Then $lui=ri=lm$, but $l$ is monic so we have found that
 \[
  \begin{tikzcd}[row sep=0.4em,column sep=2em]
    & R \arrow{dd}{u} \\
    M \arrow{ur}{i} \arrow{dr}{m} && \text{commutes.}\\
    & L \\
  \end{tikzcd}
 \]
 It remains to see that $u$ is the unique such map.
 \\ If we have $u, u' : R \rightarrow L$ with $ui=m=u'i$, then $d:=u-u'$ factors through the cokernel $R \rightarrow T$ of $i$,
 $$R \xrightarrow{d} L = R \rightarrow T \rightarrow L = 0,$$
where the last equality follows since $T$ is torsion.
\\
\end{proof}

\begin{thm}[Construction Theorem] \label{7.29}
For every $M \in \mathcal{M}$ there is a monic $M \rightarrow R$ which is a reflection of $M$ in $\mathcal{L}$.
\end{thm}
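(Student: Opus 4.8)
The plan is to reduce the statement to the Recognition Theorem~\ref{7.28}. That result tells us it is enough to exhibit an exact sequence $0 \rightarrow M \rightarrow R \rightarrow T \rightarrow 0$ in $\mathcal{B}$ with $R$ absolutely pure and $T$ torsion: the map $M \rightarrow R$ will then automatically be a reflection of $M$ in $\mathcal{L}$, and it will be monic by exactness at $M$. So the whole task is to manufacture such $R$ and $T$ starting from $M \in \mathcal{M}$.

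First I would take an injective envelope $M \rightarrow E$ of $M$ in $\mathcal{B}$. Since this is an essential extension and $\mathcal{M}$ is closed under essential extensions, $E \in \mathcal{M}$; and since $E$ is then an injective mono object, Lemma~\ref{inj abspure} gives $E \in \mathcal{L}$. This $E$ is a first approximation to $R$, but it is generally too large, because the cokernel $E/M$ need not be torsion. The fix is to pass to $E/M$, take its maximal torsion subobject $T := \mathrm{Ker}(E/M \rightarrow M(E/M))$ (this is the maximal torsion subobject by Proposition~\ref{7.24}, where $B \rightarrow M(B)$ denotes the epic reflection of $B$ into $\mathcal{M}$), and let $R \subset E$ be the preimage of $T$ under $E \rightarrow E/M$, i.e. $R := \mathrm{Ker}\bigl(E \rightarrow E/M \rightarrow M(E/M)\bigr)$.

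With this choice $M \subset R \subset E$, so $M \rightarrow R$ is monic; the subobject correspondence gives $R/M \cong T$, which is torsion; and the third isomorphism theorem gives $E/R \cong (E/M)/T \cong M(E/M)$, which lies in $\mathcal{M}$ because the reflector $M(-)\colon \mathcal{B} \rightarrow \mathcal{M}$ takes values in $\mathcal{M}$. Next I would argue that $R$ is absolutely pure: $R \in \mathcal{M}$ since $\mathcal{M}$ is closed under subobjects, and together with $E \in \mathcal{M}$ and $E/R \cong M(E/M) \in \mathcal{M}$ this says precisely that $R$ is a \emph{pure} subobject of the absolutely pure object $E$, whence $R \in \mathcal{L}$ by Lemma~\ref{7.26}. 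Feeding the exact sequence $0 \rightarrow M \rightarrow R \rightarrow T \rightarrow 0$ into the Recognition Theorem~\ref{7.28} then completes the proof.

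I expect the one genuine idea the argument hinges on to be the choice of $R$. We need two things to hold at the same time: $R/M$ torsion, so that the Recognition Theorem applies, and $E/R$ in $\mathcal{M}$, so that $R$ is a pure subobject of $E$ and hence absolutely pure. Both are delivered at once precisely because $\mathrm{Ker}(E/M \rightarrow M(E/M))$ is the maximal torsion subobject of $E/M$ while the complementary quotient $M(E/M)$ is by construction in $\mathcal{M}$; everything else is routine bookkeeping with the subobject/quotient correspondence and the closure properties of $\mathcal{M}$ and $\mathcal{L}$ already in hand.
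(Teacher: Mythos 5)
Your proposal is correct and is essentially the paper's own argument: both take an injective envelope $M \rightarrow E$, set $F = E/M$, define $R = \mathrm{Ker}\bigl(E \rightarrow F \rightarrow M(F)\bigr)$ so that $R/M$ is the maximal torsion subobject of $F$ and $E/R \cong M(F) \in \mathcal{M}$, conclude $R$ is a pure subobject of the absolutely pure $E$ (hence absolutely pure by Lemma~\ref{7.26}), and finish with the Recognition Theorem~\ref{7.28}. The only cosmetic difference is that the paper certifies exactness of $0 \rightarrow M \rightarrow R \rightarrow T \rightarrow 0$ via the Nine Lemma on a $3\times 3$ diagram, where you appeal to the subobject correspondence and the third isomorphism theorem.
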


\begin{proof}
Let $M \rightarrow E$ be an injective envelope. In particular $E$ is absolutely pure.
\\ Construct an exact commutative diagram
\[ \begin{tikzcd}[row sep=0.8em,column sep=1em]
 &0 \arrow{d} &0 \arrow{d} &0 \arrow{d}\\
0 \arrow{r} & M \arrow{r} \arrow[swap]{d} & R \arrow{r} \arrow{d} &T \arrow{d} \arrow{r} &0 \\
0 \arrow{r} & M \arrow{r} \arrow[swap]{d} & E \arrow{r} \arrow{d} &F \arrow{r} \arrow{d} &0 \\
 & 0 \arrow{r} & M(F) \arrow{d} \arrow{r} &M(F) \arrow{d} \arrow{r} &0  \\
&&0 &0 \\
\end{tikzcd}
\]
by starting with the middle row (constructed from the monic $M \rightarrow E$), then the right column (constructed from the epic $F \rightarrow M(F)$), the the bottom row, then the middle column (constructed from $E \rightarrow M(F)$, epic as the composition of two epics), then the top row. The top row is the only part that is not exact by construction; the map $M \rightarrow R$ exists because $M \rightarrow E$ factors through the kernel $R \rightarrow E$ of $E \rightarrow M(F)$.
\\ The top row is exact by Lemma \ref{2.65}.
\\

\noindent We finish simply by applying Theorem \ref{7.28} to the top row, since $M$ is mono, $T$ is torsion by Proposition \ref{7.24}, and $R$ is absolutely pure ($R \rightarrow E$ is pure since $M(F) \in \mathcal{M}$, and $E$ is absolutely pure).
\\
\end{proof}

\noindent $\mathcal{L}$ is seen to be a reflective subcategory of $\mathcal{M}$, in exactly the same way we showed that $\mathcal{M}$ was a reflective subcategory of $\mathcal{B}$: choosing a reflection $M \rightarrow R(M)$ in $\mathcal{L}$ for each $M \in \mathcal{M}$ yields an additive functor $R: \mathcal{M} \rightarrow \mathcal{L}$.
\\ (Reflections are unique up to isomorphism.)
\\

\begin{thm} \label{7.31}
$\mathcal{L}$ is abelian, and every object has an injective envelope.
\end{thm}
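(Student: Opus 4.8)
The plan is to verify the abelian-category axioms A0--A3* for $\mathcal{L}$ and then to read off the existence of injective envelopes. The guiding fact is that $\mathcal{L}$ is a full, additive, \emph{reflective} subcategory of $\mathcal{B}$, the reflector being $R\circ M:\mathcal{B}\to\mathcal{L}$ (unit $B\to M(B)\to RM(B)$). Since a reflective subcategory of a bicomplete category is bicomplete --- with limits computed as in the ambient category and colimits obtained by applying the reflector --- $\mathcal{L}$ automatically has a zero object (A0), binary products and sums (A1, A1*), all kernels (A2), with $\ker_{\mathcal{L}}(f)=\ker_{\mathcal{B}}(f)$, and all cokernels (A2*), with $\operatorname{cok}_{\mathcal{L}}(f)=RM(\operatorname{cok}_{\mathcal{B}}(f))$. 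In particular a map in $\mathcal{L}$ is monic in $\mathcal{L}$ iff it is monic in $\mathcal{B}$, and intersections and pullbacks in $\mathcal{L}$ coincide with those in $\mathcal{B}$. The feature of $\mathcal{L}$ that does all the work: because $\mathcal{L}\subseteq\mathcal{M}$, every object of $\mathcal{L}$ is a mono object, so by the definition of absolute purity any monic $L'\hookrightarrow L$ in $\mathcal{L}$ realises $L'$ as a \emph{pure} subobject of $L$, i.e.\ $L/L'\in\mathcal{M}$.

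For A3, let $f:L_1\to L_2$ be monic in $\mathcal{L}$. By purity, $C:=L_2/L_1=\operatorname{cok}_{\mathcal{B}}(f)$ lies in $\mathcal{M}$, so the Construction Theorem \ref{7.29} provides a \emph{monic} reflection $\iota:C\hookrightarrow R(C)$ with $R(C)\in\mathcal{L}$. Then $g:L_2\twoheadrightarrow C\xrightarrow{\iota}R(C)$ is a map in $\mathcal{L}$ with $\ker_{\mathcal{B}}(g)=\ker_{\mathcal{B}}(L_2\twoheadrightarrow C)=L_1$ (using that $\iota$ is monic), so $f$ represents $\ker_{\mathcal{L}}(g)$ and is therefore a kernel in $\mathcal{L}$.

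Axiom A3* is where the work lies. Let $e:A\to B$ be epic in $\mathcal{L}$ and set $K:=\ker_{\mathcal{L}}(e)=\ker_{\mathcal{B}}(e)\in\mathcal{L}$; by purity $A/K\in\mathcal{M}$, and the cokernel of $K\hookrightarrow A$ in $\mathcal{L}$ is $q:A\twoheadrightarrow A/K\xrightarrow{\iota}R(A/K)$, where $\iota$ is the monic reflection from the Construction Theorem, which moreover sits in a short exact sequence $0\to A/K\xrightarrow{\iota}R(A/K)\to T\to 0$ with $T$ torsion (Proposition \ref{7.24}). Since $e$ kills $K$, it factors as $e=\bar e\circ(A\twoheadrightarrow A/K)$ with $\bar e:A/K\to B$ monic in $\mathcal{B}$, and the reflection property of $\iota$ then yields $\tilde e:R(A/K)\to B$ in $\mathcal{L}$ with $\tilde e\iota=\bar e$, hence $e=\tilde e q$. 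As $e$ and $q$ are epic in $\mathcal{L}$, so is $\tilde e$. Next, $\tilde e$ is monic in $\mathcal{B}$: its kernel $N$ meets $\iota(A/K)$ trivially (because $\tilde e\iota=\bar e$ is monic), so $N$ embeds into $T$ and is thus torsion, while also being a subobject of the mono object $R(A/K)$, hence a torsion object of $\mathcal{M}$, which must be zero. (This uses that torsion objects are closed under subobjects, a quick consequence of $\mathcal{M}$ being closed under essential extensions.) Now $\operatorname{cok}_{\mathcal{B}}(\tilde e)=B/R(A/K)$ lies in $\mathcal{M}$ by purity of $R(A/K)\in\mathcal{L}$, while $\operatorname{cok}_{\mathcal{L}}(\tilde e)=RM(B/R(A/K))=0$ since $\tilde e$ is epic in $\mathcal{L}$; but a mono object with zero reflection is zero (its reflection unit is monic), so $\operatorname{cok}_{\mathcal{B}}(\tilde e)=0$, and $\tilde e$ --- now monic and epic in the abelian category $\mathcal{B}$ --- is an isomorphism. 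Therefore $e\cong q$ is a cokernel of $K\hookrightarrow A$ in $\mathcal{L}$, establishing A3*. I expect this identification of the epics and cokernels of $\mathcal{L}$, together with the invertibility of $\tilde e$, to be the main obstacle; it leans crucially on the two properties of the reflection unit extracted from the Construction Theorem (monic, with torsion cokernel) and on purity being available whenever an object of $\mathcal{L}$ sits inside an object of $\mathcal{M}$.

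Finally, for injective envelopes: given $L\in\mathcal{L}$, take an injective envelope $L\to E$ in $\mathcal{B}$. Since $L\in\mathcal{M}$ and $\mathcal{M}$ is closed under essential extensions, $E\in\mathcal{M}$; being injective and mono, $E$ is absolutely pure by Lemma \ref{inj abspure}, so $E\in\mathcal{L}$. Because monics, intersections, injectivity and essentiality in $\mathcal{L}$ are detected in $\mathcal{B}$, $E$ is injective in $\mathcal{L}$ and $L\to E$ is essential in $\mathcal{L}$; that is, $L\to E$ is an injective envelope of $L$ inside $\mathcal{L}$.
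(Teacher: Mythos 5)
Your proof is correct and follows essentially the same route as the paper: limits/kernels of $\mathcal{L}$ are computed in $\mathcal{B}$, cokernels by applying the reflector $R\circ M$ to the $\mathcal{B}$-cokernel, A3 via the monic reflection of the pure quotient, and injective envelopes imported from $\mathcal{B}$ via Lemma \ref{inj abspure}. The only real local difference is in A3*, where the paper invokes the Recognition Theorem \ref{7.28} on $0\to \operatorname{Im}\to L'\to T\to 0$ to identify $L'\cong R(\operatorname{Im})$, whereas you prove directly that the comparison map $\tilde e:R(A/K)\to B$ is monic (subobjects of torsion objects are torsion, and a torsion mono object is zero) and epic (a mono object with zero reflection is zero); both arguments are sound and rest on the same two facts about the reflection unit, namely that it is monic with torsion cokernel.
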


\begin{proof}
We check the axioms.
\begin{itemize}
    \item[A0.] The constantly zero functor is a zero object.
    \item[A1, A1*.] For $M \in \mathcal{M}$, we have $M \in \mathcal{L}$ iff $M \rightarrow R(M)$ is an isomorphism.
    \\ (If $M \in \mathcal{L}$ then $M \xrightarrow{1} M$ is a reflector; conversely if $M \cong R(M) \in \mathcal{L}$ then $M \in \mathcal{L}$.)
    \\ Since $R$ is an additive functor, it preserves direct sums: given $N,N' \in \mathcal{L}$, we have
    $$R(N \oplus N') \cong R(N) \oplus R(N') \cong N \oplus N',$$
    so $N \oplus N' \in \mathcal{L}$.
    
    \item[A2.] By Lemma \ref{7.26}, the $\mathcal{B}$-kernel of an $\mathcal{L}$-map $L \rightarrow L'$ is in $\mathcal{L}$, so $\mathcal{L}$ has kernels. Indeed, write $K \rightarrow L$ for the $\mathcal{B}$-kernel. Then $K \in \mathcal{L}$, since $L \in \mathcal{L}$ and $K \rightarrow L$ is a pure subobject (as K/L is just $L' \in \mathcal{L}$).
    \\ In fact, an $\mathcal{L}$-map s an $\mathcal{L}$-monic iff it is a $\mathcal{B}$-monic. (Both conditions are equivalent to the kernel being zero.)
    
    \item[A3.] Let $L \rightarrow L'$ be an $\mathcal{L}$-monic, and let $L' \rightarrow L'/L$ be its $\mathcal{B}$-cokernel. Since $L$ is absolutely pure and $L' \in \mathcal{M}$, $M:=L'/L \in \mathcal{M}$. Therefore it has a reflection $M \rightarrow R(M)$ in $\mathcal{L}$.
    \\ Now $L \rightarrow L'$ is the $\mathcal{B}$-kernel of $L' \rightarrow M$, and hence the $\mathcal{B}$-kernel of $L' \rightarrow R(M) = L' \rightarrow M \rightarrow R(M)$, since $M \rightarrow R(M)$ is monic.
    \\ Of course, the $\mathcal{B}$-kernel of an $\mathcal{L}$-map \textit{is} its $\mathcal{L}$-kernel, so $L \rightarrow L'$ the $\mathcal{L}$-kernel of $L' \rightarrow R(M))$.
    
    \item[A2*.] Let $L \rightarrow L'$ be an $\mathcal{L}$-map. Take the $\mathcal{B}$-cokernel $L' \rightarrow F$.
    \\ Then $L' \rightarrow F \rightarrow M(F) \rightarrow R(M(F))$ is an $\mathcal{L}$-cokernel:
    \\ Certainly $L \rightarrow L' \rightarrow F \rightarrow M(F) \rightarrow R(M(F)) =0$, since $L \rightarrow L' \rightarrow F =0$. Now suppose $L \rightarrow L' \rightarrow N =0$. Then $L' \rightarrow N$ factors uniquely through the cokernel as $L' \rightarrow F \xrightarrow{\exists !} N$. In turn, $F \rightarrow N$ factors uniquely through $M(F)$ as $F \rightarrow M(F) \xrightarrow{\exists !} N$, and in turn still, $M(F) \rightarrow N$ factors uniquely through $R(M(F))$ as $M(F) \rightarrow R(M(F)) \xrightarrow{\exists !} N$.
    \\ Taken altogether, $L' \rightarrow N$ factors uniquely through $L' \rightarrow F \rightarrow M(F) \rightarrow R(M(F))$.
    \item[A3*.] The above shows that an $\mathcal{L}$-map $L \rightarrow L'$ is an $\mathcal{L}$-epic iff the $\mathcal{B}$-cokernel of $L \rightarrow L'$ is torsion.
    \\ Let $L \rightarrow L'$ be an $\mathcal{L}$-epic. Take its $\mathcal{B}$-image $M \rightarrow L'$ to get an exact sequence $ 0 \rightarrow M \rightarrow L' \rightarrow T \rightarrow 0$ in $\mathcal{B}$. We have just remarked that $T$ must be torsion; furthermore $M$ is mono as a subobject of $L' \in \mathcal{M}$, so we may apply Theorem \ref{7.28} to this sequence to deduce that $L' \cong R(M)$.
    \\ Therefore, write $K \rightarrow L = Ker(L \rightarrow M)$; then the $\mathcal{B}$-cokernel of $K \rightarrow L$ is $L \rightarrow M$.
    \\  We know that the $\mathcal{L}$-cokernel of $K \rightarrow L$ must be the $\mathcal{B}$-cokernel postcomposed with a reflection down to $\mathcal{M}$ and then another reflection down to $\mathcal{L}$, but this is just $L \rightarrow R(M) = L \rightarrow L'$.
    \\ We have exhibited $L \rightarrow L'$ as an $\mathcal{L}$-cokernel!
\end{itemize}
\noindent Therefore $\mathcal{L}$ is abelian.
\\Let us see that every object in $\mathcal{L}$ has an injective envelope. Since monics are the same in $\mathcal{B}$ and in $\mathcal{L}$, if $E$ is a $\mathcal{B}$-injective envelope of an $\mathcal{L}$-object, then it is injective in $\mathcal{L}$.
\\ (To spell this out: take an injective envelope $L \rightarrow E$ in $\mathcal{B}$. $E \in \mathcal{B}$ is injective and mono, hence absolutely pure. $L \rightarrow E$ is still an injective essential extension in $\mathcal{L}$.)
\\
\end{proof}

\noindent Finally, let us return to the case of the Grothendieck category $[\mathcal{A}, Ab]$. Just as $\mathcal{M}(\mathcal{A})$ was the full subcategory of mono functors, let us define $\mathcal{L}(\mathcal{A})$ to be the full subcategory of left-exact functors.
\\ Theorems \ref{7.27} and \ref{7.31} say that $\mathcal{L}(\mathcal{A})$ is an abelian category wtih injective envelopes. The Yoneda embedding $H: \mathcal{A} \rightarrow [\mathcal{A}, Ab]$ factors through $\mathcal{L}(\mathcal{A})$, precisely because each $H^A=Hom(A,-)$ is left-exact.
\\

\begin{thm} \label{7.32}
$\mathcal{L}(\mathcal{A})$ is complete and has an injective cogenerator.
\end{thm}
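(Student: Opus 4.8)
My plan is to reduce the statement to Proposition~\ref{3.37}, which says that a complete abelian category with a generator has an injective cogenerator as soon as every object of it admits a monic into an injective object. That last hypothesis is free for $\mathcal{L}(\mathcal{A})$: Theorem~\ref{7.31} already tells us $\mathcal{L}(\mathcal{A})$ is abelian and that every object of it has an injective envelope, which in particular is a monic into an injective. So the work left is to check completeness and to exhibit a generator.

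For completeness, since $\mathcal{L}(\mathcal{A})$ is abelian it has all equalisers, so by the remark following the Fact of Section~2 it is enough to produce all small products. I would form the product $\Pi_I L_i$ of a family of left-exact functors pointwise in $[\mathcal{A}, Ab]$, i.e. $(\Pi_I L_i)(A) = \Pi_I(L_iA)$, and note that it is again left-exact: applying each $L_i$ to a left-exact sequence in $\mathcal{A}$ and then taking the product leaves it left-exact, because a product of left-exact sequences in $Ab$ is left-exact (products commute with kernels). Hence $\Pi_I L_i \in \mathcal{L}(\mathcal{A})$, and being a product in $[\mathcal{A}, Ab]$ landing inside the full subcategory it is also a product in $\mathcal{L}(\mathcal{A})$. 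Thus $\mathcal{L}(\mathcal{A})$ is complete.

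For the generator I would take $\Sigma_\mathcal{A} H^A$, which by Theorem~\ref{5.35} is a (projective) generator of $[\mathcal{A}, Ab]$. The one point that genuinely needs care — and the main obstacle — is confirming that this functor in fact lies in $\mathcal{L}(\mathcal{A})$, i.e. is left-exact: it is computed pointwise as $B \mapsto \Sigma_\mathcal{A} Hom(A,B)$, each $Hom(A,-)$ is left-exact, and a coproduct of left-exact sequences in $Ab$ is again left-exact (elements of a coproduct have finite support, so a coproduct of monics is monic and kernels of coproducts are computed componentwise). Once $\Sigma_\mathcal{A} H^A$ is known to lie in the full subcategory $\mathcal{L}(\mathcal{A})$, it is automatically a generator there: a nonzero map in $\mathcal{L}(\mathcal{A})$ is nonzero in $[\mathcal{A}, Ab]$, hence detected by a map out of $\Sigma_\mathcal{A} H^A$, which by fullness is again a map in $\mathcal{L}(\mathcal{A})$. (If one prefers not to invoke exactness of coproducts, there is a safe alternative: reflect $\Sigma_\mathcal{A} H^A$ into $\mathcal{L}(\mathcal{A})$ through the reflectors $[\mathcal{A}, Ab] \to \mathcal{M}(\mathcal{A})$ and $\mathcal{M}(\mathcal{A}) \to \mathcal{L}(\mathcal{A})$ of Proposition~\ref{7.22} and Theorem~\ref{7.29}, using that a reflector carries a generator to a generator — a test map out of the generator factors through the reflection unit.) With all three hypotheses of Proposition~\ref{3.37} assembled, it delivers the injective cogenerator. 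Apart from the left-exactness check, the argument is essentially bookkeeping over Theorems~\ref{7.31} and \ref{5.35} and Proposition~\ref{3.37}.
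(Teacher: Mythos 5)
Your proposal is correct and follows the same overall decomposition as the paper's proof: completeness via pointwise products of left-exact functors, a generator built out of the representables, and then Proposition~\ref{3.37}, whose ``monic into an injective'' hypothesis you rightly discharge with the injective envelopes of Theorem~\ref{7.31} (the paper leaves that step implicit). The one place you genuinely diverge is the choice of generator: you take the coproduct $\Sigma_\mathcal{A} H^A$ and must then argue separately that it is left-exact, which needs the extra fact that infinite direct sums are exact in $Ab$ --- true, and your finite-support justification is fine --- whereas the paper takes the \emph{product} $\Pi_\mathcal{A} H^A$, whose left-exactness is precisely the statement already proved in the completeness step, and which is still a generator because each $H^A$ is a retract of the product, so any map out of some $H^A$ detecting a nonzero map extends to one out of $\Pi_\mathcal{A} H^A$ (the paper is itself terse here, citing Theorem~\ref{5.35}, which is stated for the sum). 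Your route costs one additional exactness fact about $Ab$; the paper's reuses work already done. Your fallback via the reflectors of Proposition~\ref{7.22} and Theorem~\ref{7.29} also works, since a test map out of a generator factors through the reflection unit. Either way all hypotheses of Proposition~\ref{3.37} are assembled and the theorem follows.
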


\begin{proof}
Products in $\mathcal{L}(\mathcal{A})$ are just products in $[\mathcal{A},Ab]$, because the product of left-exact functors of $[\mathcal{A},Ab]$ is left-exact: Suppose we have a family $\{F_i\}_I$ in $\mathcal{L}(\mathcal{A})$. Let $0 \rightarrow A' \rightarrow A \rightarrow A$ be exact in $\mathcal{A}$. Then $0 \rightarrow F_iA' \rightarrow F_iA \rightarrow F_A''$ is exact for each $i$. \\Taking the product of these sequences in $Ab$ yields $0 \rightarrow \Pi (F_iA') \rightarrow \Pi(F_iA) \rightarrow \Pi(F_i A'')$, exact in $Ab$, but of course this last sequence is just $0 \rightarrow (\Pi F_i)A' \rightarrow (\Pi F_i)A \rightarrow (\Pi F_i)A''$.
\\
\noindent In particular the product of all the representables $\{H^A\}_{A \in \mathcal{A}}$ is also left-exact, and since this was a generator for $[\mathcal{A}, Ab]$ (Theorem \ref{5.35}), it is a generator for $\mathcal{L}(\mathcal{A})$.
\\ By Proposition \ref{3.37}, $\mathcal{L}(\mathcal{A})$ has an injective cogenerator.
\\
\end{proof}

\begin{thm}
$H: \mathcal{A}^{op} \rightarrow \mathcal{L}(\mathcal{A})$ is an exact full embedding.
\end{thm}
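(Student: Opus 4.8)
The plan is to deduce the statement from facts already in place. That $H$ takes values in $\mathcal{L}(\mathcal{A})$ has already been observed, since each $H^A = Hom(A,-)$ is left-exact; that $\mathcal{L}(\mathcal{A})$ is abelian is Theorems \ref{7.27} and \ref{7.31}; and $H$ is additive, being left-exact as a functor into $[\mathcal{A},Ab]$ (Corollary \ref{3.12}). Fullness and faithfulness are then immediate formalities: because $\mathcal{L}(\mathcal{A})$ is a \emph{full} subcategory of $[\mathcal{A},Ab]$, the function $Hom_{\mathcal{A}^{op}}(A,B) \to Hom_{\mathcal{L}(\mathcal{A})}(H^A,H^B)$ is literally the function $Hom_{\mathcal{A}^{op}}(A,B) \to Hom_{[\mathcal{A},Ab]}(H^A,H^B)$, which is a bijection by Theorem \ref{5.36}. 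So the content is exactness.

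Since $H$ is an additive functor between abelian categories, it is exact as soon as it carries every short exact sequence $0 \to A' \xrightarrow{i} A \xrightarrow{p} A'' \to 0$ of $\mathcal{A}$ to a short exact sequence $0 \to H^{A''} \xrightarrow{H^p} H^A \xrightarrow{H^i} H^{A'} \to 0$ of $\mathcal{L}(\mathcal{A})$. The ``left half'' of this comes for free from the ambient functor category: the sequence $0 \to H^{A''} \to H^A \to H^{A'}$ is exact in $[\mathcal{A},Ab]$ --- this is exactly the left-exactness of the Yoneda embedding into $[\mathcal{A},Ab]$, which in turn is the pointwise left-exactness of $Hom(-,B)$ --- and by Theorem \ref{7.31} the monics and the kernels of $\mathcal{L}(\mathcal{A})$ agree with those of $[\mathcal{A},Ab]$ on objects of $\mathcal{L}(\mathcal{A})$, which include all the representables. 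Hence $H^p$ is $\mathcal{L}$-monic and, since a monic equals its own image, $Im_{\mathcal{L}}(H^p) = H^{A''} = Ker_{\mathcal{L}}(H^i)$; so the sequence is exact at $H^{A''}$ and at $H^A$ inside $\mathcal{L}(\mathcal{A})$.

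What remains, and where I expect the one genuine point to lie, is showing $H^i : H^A \to H^{A'}$ is \emph{epic in $\mathcal{L}(\mathcal{A})$} --- note $H^i$ is generally not epic in $[\mathcal{A},Ab]$, so this cannot simply be imported, and cokernels in $\mathcal{L}(\mathcal{A})$ are \emph{not} those of $[\mathcal{A},Ab]$. I would check it directly via the Yoneda Lemma: for any $N \in \mathcal{L}(\mathcal{A})$ the natural isomorphism $Hom(H^A,N) \cong N(A)$ identifies precomposition with $H^i$ with the homomorphism $N(i) : N(A') \to N(A)$; since $i$ is monic in $\mathcal{A}$ and $N$, lying in $\mathcal{L}(\mathcal{A})$, is a left-exact functor, $N(i)$ is injective. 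Thus precomposition with $H^i$ is injective on $Hom(-,N)$ for every $N \in \mathcal{L}(\mathcal{A})$, i.e.\ $H^i$ is right-cancellable in $\mathcal{L}(\mathcal{A})$, hence epic. Together with the previous paragraph this exhibits $0 \to H^{A''} \to H^A \to H^{A'} \to 0$ as exact in $\mathcal{L}(\mathcal{A})$, so $H$ is exact, completing the proof that $H$ is an exact full embedding.
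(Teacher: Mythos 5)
Your proof is correct, but it takes a genuinely different route from the paper's. The paper tests exactness of $0 \rightarrow H^{A''} \rightarrow H^{A} \rightarrow H^{A'} \rightarrow 0$ against a single object: it invokes the injective cogenerator $E$ of $\mathcal{L}(\mathcal{A})$ from Theorem \ref{7.32}, notes that $Hom(-,E)$ is then an exact embedding, and so reduces everything via the Yoneda Lemma to exactness of $0 \rightarrow EA' \rightarrow EA \rightarrow EA'' \rightarrow 0$, which holds because $E$ is left-exact (it lies in $\mathcal{L}(\mathcal{A})$) and right-exact (Proposition \ref{7.11}: injectives in $[\mathcal{A},Ab]$ are right-exact). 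You instead verify the three exactness conditions separately: the monic half and exactness at $H^A$ are imported from the left-exactness of $H$ into $[\mathcal{A},Ab]$, using the fact --- established inside the proof of Theorem \ref{7.31} rather than in its statement, so worth citing carefully --- that $\mathcal{L}$-monics and $\mathcal{L}$-kernels coincide with those of the ambient functor category; and epicness of $H^i$ in $\mathcal{L}(\mathcal{A})$ is checked by Yoneda against every $N \in \mathcal{L}(\mathcal{A})$, where precomposition with $H^i$ becomes $N(i): N(A') \rightarrow N(A)$, injective precisely because objects of $\mathcal{L}(\mathcal{A})$ are left-exact. Your route buys independence from Theorem \ref{7.32} and Proposition \ref{7.11} (neither the cogenerator nor the right-exactness of injectives is needed), and it isolates the one non-formal point --- that $H^i$, which is typically not epic in $[\mathcal{A},Ab]$, becomes epic after restricting the test objects to $\mathcal{L}(\mathcal{A})$; the paper's argument is shorter once the cogenerator machinery is in place. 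Two minor remarks: your reduction of ``exact'' to ``preserves short exact sequences'' for an additive functor is a standard fact that the paper also uses silently, but it does require the factor-through-the-image argument and is not purely formal; and note that epicness alone gives exactness at $H^{A'}$ only because an epic has image equal to its codomain, which you use implicitly.
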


\begin{proof}
We know $H$ is a full embedding (Theorem \ref{5.36}); it remains to show $H$ is exact.
\\ Let $0 \rightarrow A' \rightarrow A \rightarrow A'' \rightarrow 0$ be exact in $\mathcal{A}$. We must show $0 \rightarrow H^{A''} \rightarrow H^{A} \rightarrow H^{A'} \rightarrow 0$ is exact in $\mathcal{L}(\mathcal{A})$.
\\ This is the case iff the sequence
$0 \rightarrow Hom(H^{A'},E) \rightarrow Hom(H^{A},E) \rightarrow Hom(H^{A''},E) \rightarrow 0$ is exact in $Ab$ for an injective cogenerator $E$ in $\mathcal{L}(\mathcal{A})$.
\\ ($E$ is injective, so $Hom(-,E)$ is exact; $E$ is a cogenerator, so $Hom(-,E)$ is an embedding; apply Corollary \ref{exemb}.)\\
\noindent That last sequence is isomorphic by the Yoneda Lemma to $0 \rightarrow EA' \rightarrow EA \rightarrow EA'' \rightarrow 0$, and this sequence is always exact iff $E$ is an exact functor.
\\ This is indeed the case: $E$ is right-exact by Lemma \ref{7.11}, and left-exact since it lies in $\mathcal{L}(\mathcal{A})$.
\\
\end{proof}

\begin{thm}[Freyd-Mitchell]
Every abelian category is fully abelian.
\end{thm}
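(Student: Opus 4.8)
The plan is to assemble the machinery already in place. Theorem~\ref{Mitch} reduces everything to producing, for a small abelian category $\mathcal{A}$, a cocomplete abelian category with a projective generator together with a full exact embedding of $\mathcal{A}$ into it; and Theorems~\ref{7.31} and~\ref{7.32}, together with the full exactness of the Yoneda embedding $H:\mathcal{A}^{op}\to\mathcal{L}(\mathcal{A})$ just proved, supply exactly this after dualising. So most of the work is behind us, and what remains is largely bookkeeping.

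Concretely, I would argue as follows. Let $\mathcal{B}$ be an arbitrary abelian category and let $\mathcal{A}$ be a full small exact subcategory of $\mathcal{B}$; I must produce a ring $R$ and a full exact embedding $\mathcal{A}\to R$-Mod. Since $\mathcal{A}$ is an exact subcategory it is itself a small abelian category, so I may form the functor category $[\mathcal{A},Ab]$ and its full subcategory $\mathcal{L}(\mathcal{A})$ of left-exact functors. By Theorems~\ref{7.31} and~\ref{7.32}, $\mathcal{L}(\mathcal{A})$ is abelian, complete, and has an injective cogenerator, and the Yoneda embedding $H:\mathcal{A}^{op}\to\mathcal{L}(\mathcal{A})$ is an exact full embedding. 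Dualising, $\mathcal{C}:=\mathcal{L}(\mathcal{A})^{op}$ is a cocomplete abelian category with a projective generator, and $H^{op}:\mathcal{A}\to\mathcal{C}$ is an exact full embedding; by Theorem~\ref{Mitch}, $\mathcal{C}$ is fully abelian.

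It remains to make $\mathcal{C}$'s fully-abelian property apply to $\mathcal{A}$ itself. I would let $\mathcal{A}''$ be the full subcategory of $\mathcal{C}$ whose objects are exactly the $H^{A}$ for $A\in\mathcal{A}$. This is small, since the objects of $\mathcal{A}$ form a set, and $H^{op}$, viewed as a functor $\mathcal{A}\to\mathcal{A}''$, is full, faithful and surjective on objects, hence an equivalence; in particular $\mathcal{A}''$ is abelian. Because $H^{op}$ is a full exact embedding, a kernel (resp.\ cokernel) of a map in $\mathcal{A}$ is carried to a kernel (resp.\ cokernel) in $\mathcal{C}$ that lies in $\mathcal{A}''$ and, by Corollary~\ref{exemb}, is still a kernel (resp.\ cokernel) in $\mathcal{A}''$; so the inclusion $\mathcal{A}''\hookrightarrow\mathcal{C}$ is exact, and $\mathcal{A}''$ is a full small exact subcategory of the fully abelian category $\mathcal{C}$. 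Hence there is a ring $R$ and a full exact embedding $\mathcal{A}''\to R$-Mod, and composing the equivalence $\mathcal{A}\simeq\mathcal{A}''$ (which is exact, full, and an embedding) with it gives the desired full exact embedding $\mathcal{A}\to R$-Mod. Thus $\mathcal{B}$ is fully abelian.

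The main obstacle is not a single hard computation but the verification in the previous paragraph that the image $\mathcal{A}''$ is genuinely a \emph{small full exact} subcategory of $\mathcal{C}$ --- this is precisely what is needed for $\mathcal{C}$ being fully abelian to be of any use. Smallness and fullness are immediate; exactness is the transport-of-structure argument indicated, which rests on $H^{op}$ being a full exact embedding and on Corollary~\ref{exemb}. Everything with real content --- that $[\mathcal{A},Ab]$ is a Grothendieck category, the construction of $\mathcal{L}(\mathcal{A})$ and of its injective cogenerator, the full exactness of Yoneda, and Mitchell's special case --- has already been done.
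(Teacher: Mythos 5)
Your proof is correct and follows the same route as the paper: dualise the Yoneda embedding into $\mathcal{L}(\mathcal{A})^{op}$, which is cocomplete with a projective generator, and invoke Theorem~\ref{Mitch}. The extra bookkeeping you supply (that the image $\mathcal{A}''$ is a small full exact subcategory and that composing with the equivalence $\mathcal{A}\simeq\mathcal{A}''$ preserves full exact embeddings) is exactly the detail the paper compresses into its remark that a composition of full exact embeddings is again one.
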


\begin{proof}
The Yoneda embedding $H: \mathcal{A}^{op} \rightarrow \mathcal{L}(\mathcal{A})$ provides an exact full embedding into a complete abelian category with an injective cogenerator.
\\ We may of course view this as a functor $H: \mathcal{A} \rightarrow \mathcal{L}(\mathcal{A})^{op}$. This is an exact full embedding into a cocomplete abelian category with a projective generator. Now apply Theorem \ref{Mitch}.
\end{proof}

\begin{cor}
For every small abelian category $\mathcal{A}$ there is a ring $R$ and an exact full embedding
\\ $\mathcal{A} \rightarrow R$-Mod.
\end{cor}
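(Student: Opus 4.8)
The plan is to read this off directly from the Freyd--Mitchell theorem just established, with only a bookkeeping remark in between. Recall that an abelian category $\mathcal{B}$ is \emph{fully abelian} precisely when, for \emph{every} full small exact subcategory $\mathcal{B}'$ of $\mathcal{B}$, there is a ring $R$ and a full exact embedding $\mathcal{B}' \to R$-Mod. So the content we need is entirely packaged in the previous theorem; what remains is to apply it to $\mathcal{A}$ in the right way.

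First I would observe that a small abelian category $\mathcal{A}$ is a full small exact subcategory of itself. Indeed, $\mathcal{A}$ is abelian and small by hypothesis; the identity functor $\mathrm{id}_{\mathcal{A}}\colon \mathcal{A} \to \mathcal{A}$ is trivially full and an embedding (each map $\mathrm{Hom}(A_1,A_2) \to \mathrm{Hom}(A_1,A_2)$ is the identity, hence bijective), and it is exact because it carries every left-exact and every right-exact sequence to itself. Thus $\mathcal{A}$ qualifies as an instance of the $\mathcal{B}'$ appearing in the definition of ``fully abelian,'' taking $\mathcal{B} = \mathcal{A}$ as well.

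Next I would invoke the Freyd--Mitchell theorem (every abelian category is fully abelian) to conclude that $\mathcal{A}$ is fully abelian, and then instantiate the definition with the full small exact subcategory $\mathcal{A}' = \mathcal{A}$. This yields a ring $R$ together with a full exact embedding $\mathcal{A} \to R$-Mod, which is exactly the assertion of the corollary.

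I do not expect any genuine obstacle: all the work has already been done in the chain of results culminating in Freyd--Mitchell --- the abelianness and bicompleteness of $[\mathcal{A},Ab]$, the passage to the left-exact subcategory $\mathcal{L}(\mathcal{A})$ with its injective cogenerator, and Mitchell's special case for cocomplete categories with a projective generator applied to $\mathcal{L}(\mathcal{A})^{op}$. The one point worth stating explicitly is the trivial but essential remark above, namely that ``$\mathcal{A}$ is small and abelian'' is exactly what lets us feed $\mathcal{A}$ itself into the definition of fully abelian, so that no further construction is required.
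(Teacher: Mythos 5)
Your proposal is correct and matches the paper's intent exactly: the corollary is immediate from the Freyd--Mitchell theorem once one notes that a small abelian category is a full small exact subcategory of itself, which is precisely the bookkeeping remark you supply. The paper leaves this step implicit, so your explicit justification is if anything slightly more complete.
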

\newpage


\begin{thebibliography}{100}
\raggedright

\bibitem{Freyd64}
Freyd, P. (1964) \textit{Abelian Categories – An Introduction to the theory of functors}. New York, Harper and Row.

\bibitem{Leinster14}
Leinster, T. (2014) \textit{Basic Category Theory}, Cambridge Studies in Advanced
Mathematics, Vol. 143. Cambridge, Cambridge University Press.

\end{thebibliography}
\end{document}